\documentclass[11pt,oneside]{amsart}

\usepackage{amsmath,ifthen, amsfonts, amssymb,
srcltx, amsopn, color}

\usepackage{graphicx}


\newcommand{\showcomments}{yes}

\newsavebox{\commentbox}
%
{\ifthenelse{\equal{\showcomments}{yes}}%
{\footnotemark
        \begin{lrbox}{\commentbox}
        \begin{minipage}[t]{1.25in}\raggedright\sffamily\tiny
        \footnotemark[\arabic{footnote}]}
{\begin{lrbox}{\commentbox}}}%
{\ifthenelse{\equal{\showcomments}{yes}}%
{\end{minipage}\end{lrbox}\marginpar{\usebox{\commentbox}}}
{\end{lrbox}}}

\newtheorem{thm}{Theorem}[section]
\newtheorem{lem}[thm]{Lemma}

\newtheorem{cor}[thm]{Corollary}

\newtheorem{prop}[thm]{Proposition}

\newtheorem*{thmA}{Theorem~\ref{thm:HGisQT}}

\newtheorem*{thmE}{Theorem~\ref{thm:flatplaneonlyif}}
\newtheorem*{corB}{Corollary~\ref{cor:weakrelredux}}
\newtheorem*{corC}{Corollary~\ref{cor:asdimvertstab}}

\theoremstyle{definition}
\newtheorem{defn}[thm]{Definition}
\newtheorem{rem}[thm]{Remark}
\newtheorem{exmp}[thm]{Example}

\newtheorem{claim*}{Claim}

\DeclareMathOperator{\dimension}{dim}

\DeclareMathOperator{\rank}{rk}

\DeclareMathOperator{\diam}{diam}
\DeclareMathOperator{\full}{Full}
\DeclareMathOperator{\asdim}{asdim}

\DeclareMathOperator{\comp}{c}

\newcommand{\scname}[1]{\text{\sf #1}}
\newcommand{\area}{\scname{Area}}
\newcommand{\coll}{\;\;\makebox[0pt]{$\bot$}\makebox[0pt]{$\smile$}\;\;}

\newcommand{\field}[1]{\mathbb{#1}}
\newcommand{\integers}{\ensuremath{\field{Z}}}

\newcommand{\naturals}{\ensuremath{\field{N}}}
\newcommand{\reals}{\ensuremath{\field{R}}}

\DeclareMathOperator{\ancestor}{Ancestor}

\setlength{\marginparwidth}{1.2in}
\let\oldmarginpar\marginpar
\renewcommand\marginpar[1]{\-\oldmarginpar[\raggedleft\footnotesize #1]%
{\raggedright\footnotesize #1}}

\setlength{\textwidth}{6.25in}
\setlength{\textheight}{8.5in}
\hoffset=-0.65 in
\voffset=-0.3 in

\begin{document}

\title{Weak hyperbolicity of cube complexes and quasi-arboreal groups}
\author[Mark~F.~Hagen]{Mark F. Hagen}
           \address{Dept. of Math.\\
                    University of Michigan \\
                    Ann Arbor, Michigan, USA}
           \email{markfhagen@gmail.com}

\keywords{CAT(0) cube complex, wallspace, quasi-tree, weakly hyperbolic group, asymptotic dimension, flat plane theorem}

\date{\today}
\date{\today}
\maketitle

\begin{abstract}
We examine a graph $\Gamma$ encoding the intersection of hyperplane carriers in a CAT(0) cube complex $\widetilde X$.  The main result is that $\Gamma$ is quasi-isometric to a tree.  This implies that a group $G$ acting properly and cocompactly on $\widetilde X$ is weakly hyperbolic relative to the hyperplane stabilizers.  Using Wright's recent result on the aymptotic dimension of CAT(0) cube complexes, we give a generalization of a theorem of Bell and Dranishnikov on the finite asymptotic dimension of graphs of asymptotically finite-dimensional groups.  Finally, we apply contact graph techniques to prove a cubical version of the flat plane theorem stated in terms of complete bipartite subgraphs of $\Gamma$.
\end{abstract}

\section{Introduction}\label{sec:introduction}
The set $\mathcal W$ of hyperplanes in a CAT(0) cube complex $\widetilde X$ admits a \emph{crossing} relation and, more generally, a \emph{contact} relation: distinct hyperplanes $W_1,W_2\in\mathcal W$ \emph{contact} if they have dual 1-cubes $c_1,c_2$ that have a common 0-cube.  In particular, $W_1$ and $W_2$ contact if they \emph{cross}, which happens when $c_1$ and $c_2$ form the corner of a 2-cube.  The contact relation is encoded in a \emph{contact graph} $\Gamma$, whose vertex set is $\mathcal W$ and whose edges correspond to contacting pairs of hyperplanes.  The crossing relation gives a \emph{crossing graph} $\Delta\subseteq\Gamma$, with the same vertex set, whose edges correspond to crossing pairs of hyperplanes.

The goal of this paper is to describe some properties of the contact graph and illustrate some uses of the contact graph and disc diagrams in studying CAT(0) cube complexes and cubulated groups.  A geometric advantage of examining the contact graph is that, unlike the crossing graph, it is always connected.  Moreover, in Section~\ref{sec:quasitree}, we prove the following:

\begin{thmA}
The contact graph $\Gamma$ associated to a CAT(0) cube complex $\widetilde X$ is quasi-isometric to a tree.
\end{thmA}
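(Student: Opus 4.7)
The plan is to verify Manning's bottleneck criterion for quasi-trees: I will exhibit $\Delta \geq 0$ such that for every pair $U, V \in \mathcal W$, some $\Gamma$-geodesic from $U$ to $V$ admits a midpoint lying within $\Gamma$-distance $\Delta$ of every path from $U$ to $V$ in $\Gamma$. The bridge between the combinatorics of $\Gamma$ and the geometry of $\widetilde X$ is the following translation: a $\Gamma$-path $U = W_0, W_1, \ldots, W_n = V$ yields a combinatorial path $\alpha$ in $\widetilde X$ by choosing 0-cubes $v_i \in N(W_i) \cap N(W_{i+1})$ (nonempty because $W_i, W_{i+1}$ contact), endpoints $v_{-1} \in N(U)$ and $v_n \in N(V)$, and joining each $v_{i-1}$ to $v_i$ inside the carrier $N(W_i) \cong W_i \times [0,1]$.

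The first main ingredient will be a Separator Lemma: if $M \in \mathcal W$ separates $N(U)$ from $N(V)$ in $\widetilde X$, then every $\Gamma$-path from $U$ to $V$ contains a vertex at $\Gamma$-distance $\leq 1$ from $M$. The argument is that $M$ must be dual to some edge of the associated path $\alpha$, that edge lies inside some carrier $N(W_i)$, and any hyperplane with an edge in $N(W_i) \cong W_i \times [0,1]$ either equals $W_i$ or crosses it. To produce many separating hyperplanes, I fix a combinatorial geodesic $\beta$ of minimum length between $N(U)$ and $N(V)$, with dual hyperplanes $M_1, \ldots, M_\ell$. Each $M_i$ separates $N(U)$ from $N(V)$; consecutive $M_i, M_{i+1}$ share a 0-cube of $\beta$ and hence contact; and $U$ contacts $M_1$, $V$ contacts $M_\ell$. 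Thus $U, M_1, \ldots, M_\ell, V$ is itself a $\Gamma$-path, so $d_\Gamma(U, V) \leq \ell + 1$.

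Now fix a $\Gamma$-geodesic $U = W_0, \ldots, W_n = V$. By the Separator Lemma, each $M_i$ is $\Gamma$-adjacent to some $W_{\sigma(i)}$ along this geodesic. The crux is to show that the assignment $i \mapsto \sigma(i)$ is approximately monotonic, so that the ``middle'' separator $M := M_k$, chosen with $\sigma(k) \approx n/2$, lies close in $\Gamma$ to the geodesic midpoint $W_{\lfloor n/2 \rfloor}$ while itself being within $\Gamma$-distance $1$ of every path from $U$ to $V$; Manning's criterion then yields the conclusion. The main obstacle is this monotonicity. I expect it to follow from a disc-diagram argument: realize the $\Gamma$-geodesic by a path $\alpha$ as above and build a disc diagram $D$ between $\alpha$ and $\beta$. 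The hyperplanes of $D$ are embedded non-self-intersecting dual curves, forcing the order in which the $M_i$ are met along $\partial D$ to be coherent on both sides; combined with the fact that parallel 1-cubes in $\widetilde X$ are dual to the same hyperplane, this should force $\sigma$ to be monotonic up to a uniformly bounded error, closing the argument.
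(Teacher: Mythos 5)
Your approach --- Manning's bottleneck criterion driven by what you call the Separator Lemma --- is the same route as the paper's first proof of this theorem, and the Separator Lemma is precisely the observation the paper relies on: any $\Gamma$-path from $U$ to $V$ realizes as a combinatorial path in $\widetilde X$ lying in the union of the carriers $N(W_i)$, which therefore crosses any hyperplane $M$ separating $N(U)$ from $N(V)$, and a hyperplane dual to a $1$-cube of $N(W_i)$ equals or crosses $W_i$, so $d_\Gamma(M,W_i)\le 1$. One small point you should make explicit: the dual hyperplanes $M_i$ of a shortest combinatorial geodesic $\beta$ from $N(U)$ to $N(V)$ do separate those two carriers, and this uses convexity of carriers --- if some $M_i$ crossed $N(U)$, you could slide the initial endpoint of $\beta$ across $M_i$ inside $N(U)$ and shorten $\beta$.

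The unfinished step --- approximate monotonicity of $i\mapsto\sigma(i)$ --- is more than you need, and no disc-diagram argument is required; a Lipschitz bound already closes the proof. Consecutive separators $M_i,M_{i+1}$ are dual to adjacent $1$-cubes of $\beta$, hence contact, so
\[ |\sigma(i{+}1)-\sigma(i)| \;=\; d_\Gamma\bigl(W_{\sigma(i)},W_{\sigma(i+1)}\bigr) \;\le\; d_\Gamma\bigl(W_{\sigma(i)},M_i\bigr)+d_\Gamma\bigl(M_i,M_{i+1}\bigr)+d_\Gamma\bigl(M_{i+1},W_{\sigma(i+1)}\bigr)\;\le\;3, \]
using that the $W_j$ lie on a $\Gamma$-geodesic. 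Since $M_1\coll U=W_0$ and $M_\ell\coll V=W_n$, one gets $\sigma(1)\le 2$ and $\sigma(\ell)\ge n-2$, so a discrete intermediate-value argument furnishes $k$ with $|\sigma(k)-\lfloor n/2\rfloor|\le 3$, whence $d_\Gamma(M_k,W_{\lfloor n/2\rfloor})\le 4$. By the Separator Lemma every $\Gamma$-path from $U$ to $V$ passes within $1$ of $M_k$, hence within a uniform constant of the geodesic midpoint, and Manning's criterion is verified. Incidentally, the paper's own bottleneck proof is terser than yours and, as written, elides exactly the point you isolated: it treats a chain of hyperplanes separating $V_0$ from $V_n$ as though it were a $\Gamma$-geodesic, which fails in general (in a $1\times n$ grid the two outermost vertical hyperplanes are separated by $n-1$ hyperplanes yet are at $\Gamma$-distance $2$). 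Your version, finished as above, supplies the missing reasoning; the paper also gives a second, independent proof by constructing an explicit quasi-isometry from $\Gamma$ to a ``graded root tree,'' which rests on a bound on the $\Gamma$-diameter of components of full spheres rather than on Manning's criterion.
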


Hence cubulating a group entails construction of an action on a quasi-tree.  Theorem~\ref{thm:HGisQT} can be deduced from Manning's ``bottleneck'' condition characterizing quasi-trees~\cite{ManningPseudocharacters}; we also give a more constructive proof using disc diagram techniques, by constructing a \emph{graded root tree} $\mathcal T$ and exhibiting a quasi-isometry $\Gamma\rightarrow\mathcal T$ which grades the hyperplanes by the distances of their images to a specified base vertex in $\mathcal T$.

Farb introduced the notion of \emph{weak hyperbolicity} of a group $G$ relative to a collection of subgroups $\{P\}$, to mean that the metric space obtained from the Cayley graph of $G$ by ``coning off'' each $P$-coset is $\delta$-hyperbolic.  In analogy, we define $G$ to be ``weakly free'' or \emph{quasi-arboreal} relative to subgroups $\{P\}$ if the coned-off Cayley graph is quasi-isometric to a tree.  We examine this acute form of weak hyperbolicity in Section~\ref{sec:weakrel}, where we obtain the following consequence of Theorem~\ref{thm:HGisQT}.

\begin{corB}\label{cor:corB}
Let $G$ act properly and cocompactly on the CAT(0) cube complex $\widetilde X$.  Then $G$ is quasi-arboreal relative to the set of hyperplane stabilizers.
\end{corB}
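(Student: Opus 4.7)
The plan is to reduce the corollary to Theorem~\ref{thm:HGisQT} via a Milnor--Svarc--style argument that compares the coned-off Cayley graph of $G$ with a coned-off version of $\widetilde X$, and then shows that this coned-off cube complex is quasi-isometric to $\Gamma$.

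Fix a base 0-cube $x_0 \in \widetilde X$ and a finite generating set $S$ for $G$. Let $\widehat{\mathrm{Cay}}(G,S)$ be the Cayley graph coned off along the left cosets of hyperplane stabilizers: for each coset $g\,\stabilizer(W)$ with $W \in \mathcal W$ we adjoin a cone vertex $v_{gW}$ joined by edges to every element of that coset. Let $\widehat X$ be the analogous cubical object: starting from the 1-skeleton $\widetilde X^{(1)}$, attach, for each $W \in \mathcal W$, a cone vertex $v_W$ joined to every 0-cube of the carrier $N(W)$.

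The first step is to prove that $\widehat{\mathrm{Cay}}(G,S)$ is quasi-isometric to $\widehat X$. The orbit map $g \mapsto g\cdot x_0$ is a $G$-equivariant quasi-isometry $\mathrm{Cay}(G,S) \to \widetilde X^{(1)}$ by Milnor--Svarc. Because $G$ acts cocompactly on $\widetilde X$, each $\stabilizer(W)$ acts cocompactly on $N(W)$, so the orbit $\stabilizer(W)\cdot x_0$ lies at finite Hausdorff distance from $N(W)\cap\widetilde X^{(0)}$; this constant can be chosen uniformly in $W$ since there are only finitely many $G$-orbits of hyperplanes. That uniformity lets me extend the orbit map to a quasi-isometry $\widehat{\mathrm{Cay}}(G,S) \to \widehat X$ by matching cone vertices $v_{gW}$ on both sides.

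The second step is to show that $\widehat X$ is quasi-isometric to $\Gamma$. Define $\phi \colon \widehat X^{(0)} \to \mathcal W = \Gamma^{(0)}$ by $\phi(v_W) = W$ and, for a 0-cube $x \in \widetilde X^{(0)}$, by choosing $\phi(x)$ to be any hyperplane dual to a 1-cube incident to $x$. If $x,y$ are the endpoints of a 1-cube dual to a hyperplane $U$, then $\phi(x), U, \phi(y)$ pairwise contact at $x$ or $y$, so $d_\Gamma(\phi(x),\phi(y)) \le 2$; and an edge from $x$ to $v_W$ in $\widehat X$ forces $x \in N(W)\cap N(\phi(x))$, hence $d_\Gamma(\phi(x),W) \le 1$. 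Thus $\phi$ is Lipschitz. Conversely, any contact of $W,W'$ is realized at a common 0-cube $x$, giving the length-$2$ path $v_W, x, v_{W'}$ in $\widehat X$, so distances between cone vertices in $\widehat X$ are bounded above by twice the corresponding $\Gamma$-distance. Coarse surjectivity is immediate since every $W \in \mathcal W$ is the image of $v_W$.

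Chaining the two steps with Theorem~\ref{thm:HGisQT} shows that $\widehat{\mathrm{Cay}}(G,S)$ is quasi-isometric to a tree, which is exactly the statement that $G$ is quasi-arboreal relative to the hyperplane stabilizers. The main technical nuisance is keeping the quasi-isometry constants uniform across the infinitely many hyperplanes in $\mathcal W$; cocompactness of the $G$-action reduces this to finitely many orbit representatives and is what makes the argument go through.
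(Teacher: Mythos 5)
Your argument is correct in outline but considerably more elaborate than the paper's, and it proves the wrong-flavored statement before finding its way back to the right one. The paper defines quasi-arboreality via Bowditch's formulation (Definition~\ref{defn:relhyp}): one needs a \emph{generalized coset graph}, i.e.\ a $G$-graph with finitely many orbits of edges whose vertex stabilizers are commensurable with conjugates of the $G_W$. The contact graph $\Gamma$ already satisfies this on the nose: $G$ acts on $\Gamma$, the stabilizer of a vertex is exactly the stabilizer of the corresponding hyperplane, and cocompactness (equivalently, the compactness of $X$) gives finitely many orbits of edges. Theorem~\ref{thm:HGisQT} then hands you quasi-arboreality with no further geometry. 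In other words, the paper never leaves the contact graph; no Milnor--\v{S}varc, no coset matching, no coned-off Cayley graph.

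By contrast, you argue in the Farb coned-off Cayley graph $\widehat{\mathrm{Cay}}(G,S)$ and transfer the quasi-tree property along the chain $\widehat{\mathrm{Cay}}(G,S)\sim\widehat X\sim\Gamma$. Your second and third steps recover what the paper explicitly calls an \emph{extraneous} discussion (the coned-off complex $C(\widetilde X)$ and its quasi-isometry to $\Gamma$), included there for motivation rather than proof. Your first step additionally rests on the fact that each $\stabilizer(W)$ acts cocompactly on $N(W)$; this is true, but it is a nontrivial consequence of the proper cocompact action that the paper's direct route does not need. There is also a small definitional mismatch worth noticing: $\widehat{\mathrm{Cay}}(G,S)$ is not itself a generalized coset graph in the sense of Definition~\ref{defn:relhyp}, because the vertices inherited from $\mathrm{Cay}(G,S)$ have trivial stabilizer and so fail condition~(3) whenever the hyperplane stabilizers are infinite. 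Since your chain already establishes that $\Gamma$ is a quasi-tree, you can simply cite $\Gamma$ as the required generalized coset graph and dispense with the first step and the coned-off Cayley graph entirely.
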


Section~\ref{sec:asdim} discusses the asymptotic dimension of cubulated groups.  Recently, in~\cite{Wright2010}, Wright proved a beautiful theorem stating that the asymptotic dimension of a CAT(0) cube complex is bounded above by its dimension, and observed that this implies that groups acting properly on CAT(0) cube complexes have finite asymptotic dimension.  On the other hand, Bell and Dranishnikov~\cite{BellDranishnikov01} showed that a finite graph of asymptotically finite-dimensional groups has finite asymptotic dimension.  Using Wright's theorem on asymptotic dimension of cube complexes and the \emph{Hurewicz-type theorem} of Bell and Dranishnikov~\cite{BellDranishnikov06}, we obtain the following improved statement.

\begin{corC}\label{cor:corC}
Let $G$ be a finitely generated group acting on the locally finite CAT(0) cube complex $\widetilde X$, with $\dimension\widetilde X=D<\infty$.  Suppose there exists $n\in\naturals$ such that for each 0-cube $x$, the stabilizer $G_x$ satisfies $\asdim G_x\leq n$.  Then $\asdim G\leq n+D$.
\end{corC}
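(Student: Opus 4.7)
The strategy is to apply the Hurewicz-type theorem of Bell--Dranishnikov to the orbit map. Fix a $0$-cube $x_0 \in \widetilde{X}$ and define $\phi \colon G \to \widetilde{X}$ by $\phi(g) = g \cdot x_0$. Since $G$ is finitely generated, $\phi$ is Lipschitz when $G$ carries a word metric and $\widetilde{X}$ carries its combinatorial metric. Wright's theorem gives $\asdim \widetilde{X} \leq D$, so to conclude $\asdim G \leq n + D$ via Hurewicz, the task is to verify that for each $R > 0$, the preimage family $\{\phi^{-1}(B_R(y)) : y \in \widetilde{X}\}$ has uniform asymptotic dimension at most $n$.

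I would first analyze the structure of the preimages. For each $y$, the set $\phi^{-1}(B_R(y))$ is a disjoint union of cosets $h\, G_{x_0}$ indexed by the $0$-cubes $h \cdot x_0 \in B_R(y) \cap G \cdot x_0$. With the metric induced from the Cayley graph, each such coset is an isometric left-translate of $G_{x_0}$ and thus has $\asdim \leq n$ by hypothesis. To reduce the uniform-in-$y$ statement to a single set, note that whenever $B_R(y)$ meets the orbit, one may pick $g_y$ with $g_y \cdot x_0 \in B_R(y)$; then $B_R(y) \subseteq B_{2R}(g_y \cdot x_0)$ and so $\phi^{-1}(B_R(y)) \subseteq g_y \cdot F_{2R}$, where $F_{2R} := \phi^{-1}(B_{2R}(x_0))$.

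It therefore suffices to show $\asdim F_{2R} \leq n$. By local finiteness of $\widetilde{X}$, the ball $B_{2R}(x_0)$ contains only finitely many $0$-cubes, and hence $F_{2R}$ is a finite union of cosets of $G_{x_0}$, each of asymptotic dimension at most $n$. The finite-union theorem for asymptotic dimension then bounds $\asdim F_{2R}$ by $n$. Since every nonempty preimage embeds isometrically into a translate of $F_{2R}$, the preimage family has uniform asymptotic dimension at most $n$, and the Hurewicz-type theorem gives $\asdim G \leq \asdim \widetilde{X} + n \leq D + n$.

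The main obstacle I foresee is executing the finite-union step with \emph{uniform} control: combining the $d$-disjoint, uniformly bounded covers of the cosets inside $F_{2R}$ into a single $(n+1)$-colored $d$-disjoint cover requires understanding how close two cosets can be in the Cayley graph when their orbit representatives lie in a common ball of $\widetilde{X}$. This is where disc diagram techniques should enter, giving geometric control over the relative positions of cosets and providing the input needed to assemble the per-coset covers without losing $d$-disjointness.
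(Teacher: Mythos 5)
Your proof is correct, and in fact it is cleaner than the paper's. Both arguments use the same scaffolding: the orbit map $\psi(g)=gx_o$, Wright's theorem bounding $\asdim\widetilde X$ by its dimension, and the Hurewicz-type theorem of Bell--Dranishnikov applied to the preimages of balls. The difference is in how one shows the preimages have uniform asymptotic dimension at most $n$. The paper decomposes the $R'$-quasistabilizer $W_{R'}(x_o)$ into \emph{hyperbolic} elements (which it places in finitely many left cosets of $G_{x_o}$ by local finiteness) and \emph{elliptic} elements, and for the latter it runs a disc diagram argument on geodesic triangles to show that an elliptic element of the quasistabilizer fixes a $0$-cube within $R$ of $x_o$. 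Your observation renders that second step redundant: the local-finiteness argument the paper applies to ``hyperbolic'' elements does not use hyperbolicity at all, so \emph{every} element $g\in W_{R'}(x_o)$ satisfies $gx_o\in B_{R'}(x_o)$ and hence lies in one of the finitely many cosets $hG_{x_o}$ corresponding to the finitely many orbit points in that ball. Since left translation is an isometry of the Cayley graph, each such coset has the same asymptotic dimension as $G_{x_o}$, so the finite union theorem gives $\asdim F_{2R}\leq n$.

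Your closing worry about ``uniform control'' is also a non-issue, and your own reduction already dispatches it: the finite union theorem is applied \emph{once}, to the single fixed set $F_{2R}$, yielding $\asdim F_{2R}\leq n$; uniformity over $y$ then comes for free because every nonempty preimage $\psi^{-1}(B_R(y))$ embeds isometrically as a subset of some translate $g_yF_{2R}$, and a single $(n+1)$-colored, $d$-disjoint, uniformly bounded cover of $F_{2R}$ transports by left translation to every such translate. You do not need to know how close distinct cosets sit in the Cayley graph, and no disc diagram techniques are actually required for this statement.
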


Section~\ref{sec:planarbipartiteII} discusses the relationship between Gromov-hyperbolicity of CAT(0) cube complexes and complete bipartite subgraphs of the associated crossing graph.  The primary aim is:

\begin{thmE}\label{thm:thmE}
Let $G$ be a group acting properly and cocompactly on the CAT(0) cube complex $\widetilde X$.  Then exactly one of the following holds:
\begin{enumerate}
\item $G$ is word-hyperbolic.
\item The crossing graph $\Delta$ of $\widetilde X$ contains a complete bipartite graph $K_{\infty,\infty}$.
\end{enumerate}
\end{thmE}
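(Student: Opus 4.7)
The plan is to prove the theorem as a cubical strengthening of Bridson's CAT(0) flat plane theorem for geometric actions, establishing the two directions of the dichotomy separately.  For $(2) \Rightarrow \neg(1)$, given $K_{\infty,\infty} \subseteq \Delta$ with parts $\{W_i\}$ and $\{V_j\}$, the hyperplanes within each part are pairwise non-crossing while every cross-part pair crosses.  Two non-crossing hyperplanes in $\widetilde X$ stand in a halfspace-inclusion relation, so a Ramsey/K\"onig's lemma argument in the locally finite $\widetilde X$ extracts from each part an infinite sub-chain totally ordered by halfspace inclusion.  Restricting each sub-chain to $n$ consecutive elements, the cross-part crossings, together with Sageev's correspondence between pairwise crossing hyperplanes and cubes and a convex-hull argument, assemble the resulting $2$-cubes into an isometrically embedded convex subcomplex of $\widetilde X$ isomorphic to an $n \times n$ grid of unit squares.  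The presence of such flats for arbitrary $n$ prevents Gromov-hyperbolicity of $\widetilde X$, and hence of the quasi-isometric group $G$.

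For $\neg(1) \Rightarrow (2)$, suppose $G$ is not word-hyperbolic.  Since $G$ acts geometrically on the CAT(0) space $\widetilde X$, Bridson's flat plane theorem produces an isometrically embedded CAT(0) flat plane $F \subset \widetilde X$; the remaining task is to promote $F$ to a $K_{\infty,\infty}$ in $\Delta$.  I would approximate $F$ by a sequence of minimal-complexity combinatorial disc diagrams $D_n \to \widetilde X$ whose boundaries track ever larger simple closed curves on $F$, with cocompactness bounding the local combinatorial complexity of each $D_n$.  Standard disc-diagram surgery (spur removal, elimination of bigons in the dual-curve picture) ensures hyperplane injectivity.  An Arzel\`a--Ascoli-type diagonal limit then extracts an isometrically embedded combinatorial $\reals^2$-subcomplex of $\widetilde X$, whose two transverse families of dual hyperplanes give the two sides of a $K_{\infty,\infty}$ in $\Delta$.

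The principal obstacle is the promotion of $F$ to a combinatorial flat.  One must show that the approximating diagrams limit to a \emph{convex} subcomplex, not merely to a quasi-isometric image, and that the two transverse ``directions'' of $F$ produce genuinely distinct, infinite families of hyperplanes.  In particular, the argument must rule out the degenerate possibility that $F$ lies inside a convex subcomplex of the form (quasi-tree)~$\times$~interval, which is exactly what two transverse infinite hyperplane families forbid.  Theorem~\ref{thm:HGisQT} and the finer contact-graph structure developed in the preceding sections should constrain the ways in which hyperplanes crossing $F$ can fail to cross one another, and thereby drive the limiting step.
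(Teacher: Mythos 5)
Your plan for $(2)\Rightarrow\neg(1)$ is close in spirit to the paper's Lemma~\ref{lem:bipartitesubgraphimpliesplanargrid}, with one overreach: you claim the $n\times n$ grid assembled from the biclique is a \emph{convex} subcomplex, but the paper explicitly remarks (just after the proof of that lemma) that the grid need only be isometrically embedded, not convex --- convexity would require the biclique to be a full subgraph. Fortunately only isometric embedding is needed to contradict the isoperimetric inequality, and your ``extract an infinite chain from each side'' step, while not quite what the paper does (it builds trees $T_H, T_V$ and finds long geodesics via a degree bound, Claim~II), is recoverable with K\"onig's lemma and local finiteness, so this direction is essentially sound modulo details.

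The other direction, $\neg(1)\Rightarrow(2)$, contains a genuine gap that you yourself flag and do not fill: the passage from Bridson's CAT(0) flat $F\subset\widetilde X$ to a combinatorial object producing $K_{\infty,\infty}$ in $\Delta$. This is precisely where the paper goes a different way and avoids the issue entirely. Rather than invoking the flat plane theorem and then trying to ``combinatorialize'' $F$ by Arzel\`a--Ascoli, the paper's Lemma~\ref{lem:thinbicliquesimplieshyperbolic} works directly with the failure of hyperbolicity: for each $n$, there is a non-$n$-thin combinatorial geodesic triangle, and a minimal-area disc diagram filling it has $\geq n$ dual curves separating a chosen boundary point $x$ from each of the two opposite sides; these two families of dual curves pairwise cross (since the three sides are geodesics, each dual curve runs between two different sides), and map injectively to hyperplanes, yielding a $K_{n,n}$ in $\Delta$. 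No CAT(0) flat and no convexity claim is needed. Your ``approximate $F$ by disc diagrams and take a diagonal limit to an embedded combinatorial $\reals^2$'' is strictly stronger than what is required and is not known to follow; in particular nothing forces the grid hyperplanes coming from $F$ to extend coherently to an infinite planar lattice in $\widetilde X^{(1)}$, and you give no mechanism for ruling out the degeneracies you mention. Finally, the limiting step the paper actually uses is combinatorially much lighter than Arzel\`a--Ascoli: once one has $K_{n,n}\subset\Delta$ for all $n$, cocompactness gives finitely many $G$-orbits of each $K_{n,n}$, so the graph $\Lambda$ whose vertices are bicliques and whose edges are inclusions $K^i_{n,n}\subset K^j_{n+1,n+1}$ is locally finite with an infinite component, and K\"onig's lemma (Lemma~\ref{lem:konig}) extracts an increasing chain whose union is the desired $K_{\infty,\infty}$. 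You should replace the Bridson-plus-limit argument with the thin-triangle disc-diagram argument and this graph-theoretic K\"onig step.
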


Theorem~\ref{thm:flatplaneonlyif} is a cubical version of the flat plane theorem~(see e.g.~\cite{BridsonHaefliger}).  The theorem is proved by constructing the $K_{\infty,\infty}$ from a sequence of arbitrarily large finite complete bipartite graphs, much as one constructs a plane as a limit of arbitrarily large discs in the proof of the flat plane theorem.

Sections~\ref{sec:prelim} and~\ref{sec:sphere} contain preliminary material: Section~\ref{sec:prelim} summarizes the relevant properties of CAT(0) cube complexes and surveys techniques for manipulating disc diagrams in nonpositively curved cube complexes.  These techniques appear in unpublished lecture notes of Casson, although not strictly in the context of CAT(0) cube complexes.  They were developed further by Sageev in his thesis, and are described extensively by Wise in recent work.  Moreover, Chepoi has used disc diagram techniques in his proof that CAT(0) cube complexes are median spaces~\cite{ChepoiMedian}.  Section~\ref{sec:sphere} describes spheres in contact graphs.\\

\textbf{Acknowledgements:}  I am grateful to Michah Sageev and Dani Wise for helpful discussions, and to Victor Chepoi and Piotr Przytycki for useful criticism.  I also thank anonymous referees for several corrections, clarifications, and simplifications.

\section{Preliminaries}\label{sec:prelim}
The following notions and notations are used throughout.

\subsection{CAT(0) cube complexes}
\begin{defn}[Cube complex]\label{defn:CAT(0)cubecomplex}
For $0\leq n<\infty$, an $n$-\emph{cube} is a copy of the Euclidean cube $\left[-\frac{1}{2},+\frac{1}{2}\right]^n$.  A $d$-\emph{dimensional face} of the $n$-cube $c$ is a subspace obtained by restricting $n-d$ coordinates to $\pm \frac{1}{2}$.
A \emph{cube complex} $X$ is a CW-complex whose $n$-dimensional cells are $n$-cubes, such that the attaching map of each cube $c$ restricts to a combinatorial isometry on each face of $c$, mapping the face to a cube of $X$.

The \emph{link} of a 0-cube $v$ in a cube complex $X$ is the complex made of simplices whose $n$-simplices correspond to the $(n+1)$-cubes that have a corner at $v$, with simplices attached along their faces according to the attaching of the corresponding cubes.

A simplicial complex $S$ is a \emph{flag complex} if each family of $n+1$ pairwise-adjacent 0-simplices in $S$ spans an $n$-simplex, for each $n\geq 0$.  A cube complex $X$ is \emph{nonpositively curved} if the link of $v$ is a flag complex for every 0-cube $v$ of $X$.  A simply-connected nonpositively curved cube complex $\widetilde X$ is called a \emph{CAT(0) cube complex}.
\end{defn}

The term ``CAT(0) cube complex'' is an artifact of the result of Gromov stating that a simply-connected finite-dimensional cube complex satisfying the nonpositive curvature condition of Definition~\ref{defn:CAT(0)cubecomplex} admits a piecewise-Euclidean CAT(0) metric~\cite{Gromov87}.  This also follows from more general results of Bridson, in the finite-dimensional case~\cite{BridsonThesis}, and was extended by Leary to infinite-dimensional cube complexes~\cite{LearyInfiniteCubes}.  When we mention the CAT(0) metric on a CAT(0) cube complex $\widetilde X$, we are referring to this metric.  However, as discussed below, we shall usually use the more natural combinatorial metric on $\widetilde X^{(1)}$.

\begin{defn}[Hyperplane]\label{defn:Hyperplane}
A \emph{midcube} of an $n$-cube $c$ is an $(n-1)$-cube in $c$ obtained by restricting exactly one coordinate to 0.  A \emph{hyperplane} $W$ in the CAT(0) cube complex $\widetilde X$ is a connected union of midcubes of cubes in $\widetilde X$ such that, for each finite-dimensional cube $c$ of $\widetilde X$, either $W\cap c=\emptyset$ or $W\cap c$ is a single midcube of $c$.  The \emph{carrier} $N(W)$ of $W$ is the union of all closed cubes $c$ such that $W$ intersects $c$ in a midcube.

Let $X$ be a nonpositively-curved cube complex, so that the universal cover $\widetilde X$ of $X$ is a CAT(0) cube complex.  An \emph{immersed hyperplane} $\overline W$ of $X$ is the image of a hyperplane $W$ of $\widetilde X$ under the universal covering projection, and the \emph{immersed carrier} $N(\overline W)$ of $W$ is the image of $N(W)$.
\end{defn}

In~\cite{Sageev95}, Sageev proved:

\begin{thm}[Hyperplane properties]\label{thm:HyperplaneProperties}
If $W$ is a hyperplane of the CAT(0) cube complex $\widetilde X$, then:

\begin{enumerate}
  \item $W$ is two-sided, i.e. $N(W)\cong W\times [-\frac{1}{2},\frac{1}{2}]$.
  \item $W$ is \emph{separating}, i.e. $\widetilde X- W$ has exactly two components, called \emph{halfspaces associated to $W$}.
  \item Any midcube is contained in a unique hyperplane.
  \item $W$ is a CAT(0) cube complex whose hyperplanes are of the form $V\cap W$, where $V\neq W$ is a hyperplane of $\widetilde X$ that crosses $W$.
\end{enumerate}
\end{thm}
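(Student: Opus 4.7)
The strategy is to reconstruct each hyperplane directly from the \emph{parallelism} equivalence on 1-cubes: declare edges $e, e'$ parallel if they are opposite sides of a 2-cube, take the transitive closure $\sim$, and to each $\sim$-class $[e]$ associate the union $W_{[e]}$ of all midcubes dual to edges in $[e]$. Statement (3) then falls out almost definitionally: a midcube is dual to a unique 1-cube whose class determines the unique hyperplane containing it, and the required ``one midcube per cube'' intersection property is easy to extract from the equivalence.

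I would prove (1) and (2) in tandem using disc diagrams, the main tool summarized in Section~\ref{sec:prelim}. Fix a hyperplane $W$ and a dual edge $e$ with endpoints $x_\pm$. I would argue that every edge path $\gamma$ from $x_-$ to $x_+$ crosses edges dual to $W$ an odd number of times: otherwise the loop $\gamma \cup e$ would meet $W$ an odd number of times, and a minimal-area disc diagram $D \to \widetilde X$ filling this loop would contradict the fundamental fact that \emph{dual curves in a minimal disc diagram are embedded arcs with both endpoints on $\partial D$}. A closed dual curve, a self-intersecting dual curve, or a pair of dual curves forming a bigon each admits an area-reducing square move, and a monogon is ruled out by flagness of links. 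Once dual curves are simple boundary-based arcs, the boundary edges dual to any given hyperplane pair up, forcing the crossing count to be even. The parity of crossings with $W$ now gives a well-defined $\integers/2$-valued function on 0-cubes splitting $\widetilde X\setminus W$ into exactly two components, proving (2), and allows one to coherently orient all edges dual to $W$ towards a preferred halfspace, producing the product structure $N(W)\cong W\times[-\frac{1}{2},\frac{1}{2}]$ required for (1).

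For (4), I would equip $W$ with the natural structure whose $k$-cubes are midcubes of $(k+1)$-cubes of $\widetilde X$; face-isometry of attaching maps is inherited from $\widetilde X$. The link in $W$ of a 0-cube $m$ (the midcube of an edge $e$ with endpoint $x$) embeds as the subcomplex of $\link(x)$ spanned by simplices corresponding to cubes containing $e$, and flagness passes to this full subcomplex, so $W$ is nonpositively curved. Simple connectivity of $W$ follows from the product structure (1) combined with simple connectivity of the carrier $N(W)$ (which, as a convex subcomplex of $\widetilde X$, is itself built from disc-diagram arguments). Finally, a hyperplane of $W$ is a parallelism class of midcubes of 2-cubes of $W$, i.e.\ midcubes of 3-cubes of $\widetilde X$ that contain an edge dual to $W$; these are precisely the traces $V\cap W$ of hyperplanes $V\neq W$ of $\widetilde X$ that cross $W$.

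The main obstacle is the disc diagram lemma that dual curves in minimal diagrams are simple arcs based on the boundary. Ruling out self-intersections, bigons, and closed dual curves simultaneously is the technical core of the argument and requires the area-reducing hexagon and square moves for cube-complex disc diagrams; once this lemma is in hand, the separation and two-sidedness claims become parity bookkeeping, and the internal CAT(0) cube structure on $W$ reduces to transporting flagness and simple connectivity from $\widetilde X$.
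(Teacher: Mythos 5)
This theorem is not proved in the paper: it is stated with the attribution ``In~\cite{Sageev95}, Sageev proved,'' so there is no internal argument to compare against. Your outline is the standard one (parallelism classes of edges, two-sidedness and separation via a $\integers/2$ crossing count, flagness descent to $W$), so the question is whether the sketch closes. A few genuine gaps stand out. For (3), asserting it ``falls out almost definitionally'' hides the real content: one must show the union of midcubes over a parallelism class meets each cube in at most one midcube (equivalently, a hyperplane does not self-cross or self-osculate in a cube). That is exactly where the flag condition and the nongon/monogon disc-diagram arguments do their work; it is not bookkeeping. For (2), the parity function shows \emph{at least} two components and shows that 0-cubes with different parities cannot be joined in $\widetilde X - W$, but it does \emph{not} by itself show ``at most two'': to connect two same-parity 0-cubes by a path in $\widetilde X - W$ you must push a path that crosses $W$ an even number of times off $W$, which uses the product structure $N(W)\cong W\times[-\frac12,\frac12]$ together with connectedness of $W$ (or an equivalent convexity/gate argument for the carrier). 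As written you present the parity function as directly yielding ``exactly two components,'' and that step needs to be filled in.

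For (4), the nonpositive-curvature claim (links in $W$ are full subcomplexes of links in $\widetilde X$, hence flag) is fine, but the simple-connectivity step has a circularity risk: you invoke simple connectivity of $N(W)$ ``as a convex subcomplex of $\widetilde X$,'' yet convexity of hyperplane carriers is normally established \emph{after} the basic hyperplane properties, using them. A cleaner route is to fill a loop in $W$ by a disc diagram in $\widetilde X$ and use a retraction of $\widetilde X$ onto $N(W)$ (or to show directly, via the parity/product structure, that a disc diagram with boundary on $N(W)$ can be pushed into $N(W)$) and then project along the $[-\frac12,\frac12]$ factor. None of these gaps is fatal --- they are exactly the lemmas Sageev proves --- but the proposal reads as though they were automatic, and they are the technical core.
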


If $A,B\subset\widetilde X$ are subspaces of the CAT(0) cube complex $\widetilde X$ and $W$ is a hyperplane of $\widetilde X$, then $W$ \emph{separates} $A$ and $B$ if $A$ and $B$ lie in distinct halfspaces associated to $W$.

\begin{defn}[Contacting hyperplanes]\label{defn:hyperplaneadjacency}
Let $\widetilde X$ be a CAT(0) cube complex and $V$ and $W$ a pair of distinct hyperplanes.  A 1-cube $c$ is \emph{dual} to $W$ if the 0-cubes of $c$ are separated by $W$.  Equivalently, $c$ is dual to $W$ if $W$ contains the midcube of $c$.

The hyperplanes $V$ and $W$ \emph{cross} if there is a 2-cube $s$ whose 2 distinct midcubes are contained in $V$ and $W$ respectively.  This is denoted by $V\bot W$.  The hyperplanes $V$ and $W$ \emph{osculate} if they do not cross and there exist distinct 1-cubes $c$ and $c'$, dual to $V$ and $W$ respectively, such that $c$ and $c'$ have a common 0-cube.  In other words, $V$ and $W$ osculate if $N(V)\cap N(W)\neq\emptyset$ and $V$ and $W$ do not cross.

If $V$ and $W$ either cross or osculate, then they \emph{contact}, denoted $V\coll W$.  Note that $V\coll W$ if and only if no hyperplane $U$ separates $V$ from $W$.
\end{defn}

The \emph{dimension} of the CAT(0) cube complex $\widetilde X$ is at least $d$ if $\widetilde X$ contains a $d$-cube.  If $\widetilde X$ contains a $d$-cube but does not contain a $(d+1)$-cube, then $\dimension\widetilde X=d$.  Equivalently, $\dimension\widetilde X$ is equal to $\sup_S|S|$, where $S$ varies over all sets of pairwise-crossing hyperplanes.  The \emph{degree} of $\widetilde X$ is at least $d$ if there exists a 0-cube in $\widetilde X$ with at least $d$ distinct incident 1-cubes.  Equivalently, the degree of $\widetilde X$ is at least $d$ if there is a family of $d$ pairwise-contacting hyperplanes.  Hence the degree of $\widetilde X$ is bounded below by the dimension.

\subsection{Metric notions}
Let $\widetilde X$ be a CAT(0) cube complex and let $\mathcal W$ be the set of hyperplanes of $\widetilde X$.  Consider the standard path-metric $d_{\widetilde X}$ on the graph $\widetilde X^{(1)}$.  It is shown in~\cite{ChepoiMedian} that $\widetilde X^{(1)}$ is a \emph{median graph}: for any three distinct 0-cubes $x,y,z$, there exists a unique 0-cube $m=m(x,y,z)$ such that \[d_{\widetilde X}(x,y)=d_{\widetilde X}(y,m)+d_{\widetilde X}(m,x),\]
\[d_{\widetilde X}(z,y)=d_{\widetilde X}(y,m)+d_{\widetilde X}(m,z),\]
and
\[d_{\widetilde X}(x,z)=d_{\widetilde X}(z,m)+d_{\widetilde X}(m,x).\]

From this characterization, or from Theorem~\ref{thm:HyperplaneProperties}.(2), it follows that a path $P\rightarrow\widetilde X^{(1)}$ is a geodesic if and only if $P$ contains at most one 1-cube dual to each hyperplane of $\widetilde X$.  In other words, $d_{\widetilde X}(x,y)$ counts the number of hyperplanes $W$ such that the 0-cubes $x$ and $y$ lie in distinct halfspaces associated to $W$.

In this paper, all of our arguments are combinatorial, and we shall work with the metric $d_{\widetilde X}$ on the median graph $\widetilde X^{(1)}$.  Accordingly, we adopt the following terminology: unless stated otherwise, all paths in $\widetilde X$ are combinatorial, i.e. a path $P$ in $\widetilde X$ is a continuous combinatorial map $P:I\rightarrow\widetilde X^{(1)}$, where $I$ is a CAT(0) cube complex homeomorphic to an interval.  The path $P$ is a \emph{geodesic} if it is a geodesic path in $\widetilde X^{(1)}$ or, equivalently, if the induced map from the set of hyperplanes of $I$ to the set of hyperplanes of $\widetilde X$ is injective, i.e. if $P$ crosses each hyperplane of $\widetilde X$ at most once.  The subcomplex $Y\subset\widetilde X$ is \emph{isometrically embedded} (\emph{convex}, \emph{bounded}, etc.) if $Y^{(1)}$ is isometrically embedded (convex, bounded, etc.) in $\widetilde X^{(1)}$, with respect to $d_{\widetilde X}$.  By, for example, verifying that its 1-skeleton is \emph{gated}, one sees that for each hyperplane $H$, the carrier $N(H)$ is convex in this sense~\cite{ChepoiMedian}.

As mentioned above, there is a piecewise-Euclidean CAT(0) metric on $\widetilde X$.  In Section~\ref{sec:asdim} and Section~\ref{sec:planarbipartiteII}, we make several statements about the CAT(0) metric, assuming that $\widetilde X$ is finite-dimensional.  This is justified by the following fact: the space $\widetilde X$, with its CAT(0) metric, is quasi-isometric to $\widetilde X^{(1)}$ with the metric $d_{\widetilde X}$ when $\dimension\widetilde X<\infty$.  This fact was proved in greater generality by Bridson~\cite{BridsonThesis}, and a simpler proof in the cubical context appears in~\cite{CapraceSageev}.  The statements about the CAT(0) metric deal with Gromov-hyperbolicity and finite asymptotic dimension, both of which are quasi-isometry invariant properties.

We emphasize, however, that, unless stated otherwise, if we refer to a cubical map $\widetilde Y\rightarrow\widetilde X$ of CAT(0) cube complexes as an isometric embedding, we mean that the image of $\widetilde Y$ is a subcomplex whose 1-skeleton is isometrically embedded in $\widetilde X^{(1)}$ with respect to the combinatorial path-metric $d_{\widetilde X}$.  It is worth noting that Sageev showed that each hyperplane and carrier is convex with respect to the CAT(0) metric~\cite{Sageev95}, and these notions of convexity coincide for full subcomplexes~\cite{HaglundSemisimple}, although we shall not use this fact, or the very natural extension of $d_{\widetilde X}$ to the whole complex considered by Haglund.

\begin{rem}[Cubulated groups]
If a group $G$ acts on a CAT(0) cube complex $\widetilde X$ by cubical automorphisms if $G$ acts on $\widetilde X^{(0)}$, then $G$ stabilizes the set of hyperplanes.  In this situation, $G$ acts on the metric space $(\widetilde X^{(1)},d_{\widetilde X})$ by isometries.

The action is \emph{metrically proper} if, for each bounded subcomplex $B\subset\widetilde X$, there are finitely many $g\in G$ such that $gB\cap B\neq\emptyset$.  When $\widetilde X$ is locally finite, its 1-skeleton is a proper metric space and thus a proper action (in the sense that cube stabilizers are finite) is metrically proper.  Throughout this paper a \emph{cubulated group} is one admitting a metrically proper action by cubical automorphisms on a CAT(0) cube complex.
\end{rem}

\subsection{Cubulating wallspaces}

\begin{defn}\label{defn:wallspace}
A \emph{wallspace} is a pair $(\mathcal S,\mathcal W)$, with $\mathcal S$ a (nonempty) set and $\mathcal W$ a set of \emph{walls}, which are partitions $W$ of $\mathcal S$ into disjoint nonempty \emph{halfspaces} $W^{\pm}$.  Moreover, we suppose that for each $s_1,s_2\in\mathcal S$, there is a finite, nonzero number of walls $W$ that \emph{separate} $s_1$ and $s_2$, in the sense that $s_1$ and $s_2$ lie in distinct halfspaces associated to $W$.

More generally, $W$ \emph{separates} the subsets $A,B\subset\mathcal S$ if $A$ and $B$ lie in distinct halfspaces of $W$, and $W$ separates the walls $U,V$ if it separates some halfspace of $U$ from some halfspace of $V$.

Walls $V,W\in\mathcal W$ \emph{cross} if each of the four \emph{quarterspaces} $V^{\pm}\cap W^{\pm}$ is nonempty.
\end{defn}

\begin{rem}[Sageev's construction]\label{rem:SageevsConstruction}
A wallspace $(\mathcal S,\mathcal W)$ determines a CAT(0) cube complex $\widetilde X$ in such a way that the hyperplanes of $\widetilde X$ correspond to the walls $\mathcal W$ and hyperplanes cross if and only if the corresponding walls do.

An \emph{orientation} of $W$ is a choice of exactly one of the halfspaces associated to $W$, and for each $s\in\mathcal S$, to \emph{orient $W$ towards $s$} is to choose the orientation of $W$ that contains $s$.  More generally, for any subset of $\mathcal S$ that lies in a single halfspace associated to $W$, we speak of orienting $W$ towards that subset.

A \emph{0-cube} is a map $f:\mathcal W\rightarrow\left\{W^{\pm}\mid W\in\mathcal W\right\}$ with the following properties:

\begin{enumerate}
\item (Orientation) For each $W\in\mathcal W$, we have $f(W)\in\{W^-,W^+\}$, i.e. $f$ orients each wall.
\item (Consistency) For all $V,W\in\mathcal W$, we have $f(V)\cap f(W)\neq\emptyset$.
\end{enumerate}

The consistency condition is automatically satisfied for crossing pairs of walls and says that a 0-cube never orients a wall ``away'' from another wall.  The 0-cube $f$ is \emph{canonical} if there exists $s\in\mathcal S$ such that $f(W)$ contains $s$ for each $W\in\mathcal W$.

Denote by $C_0$ the set of all 0-cubes.  The 0-cubes $f_1,f_2\in C_0$ are joined by a 1-cube if and only if there is exactly one wall $W$ such that $f_1(W)\neq f_2(W)$.  We thus obtain a graph $C_1$ whose vertices are the 0-cubes and whose edges are the 1-cubes.  In general, $C_1$ is disconnected, and the cube complex $\widetilde X$ \emph{dual} to the wallspace $(\mathcal S,\mathcal W)$ is constructed from $C_1$ as follows.

Choose any canonical 0-cube $f_s$, which orients each wall toward the element $s\in\mathcal S$.  If $f_t$ is another canonical 0-cube, then since any two points are separated by finitely many walls, $f_s$ and $f_t$ differ on finitely many walls, and thus belong to the same component of $C_1$.  Denote by $\widetilde X^{(1)}$ this \emph{canonical component}.  One then verifies that $\widetilde X^{(1)}$ is the 1-skeleton of a uniquely determined CAT(0) cube complex $\widetilde X$, which is independent of the choice of canonical 0-cube.  $\widetilde X$ is the cube complex \emph{dual} to the wallspace $(\mathcal S,\mathcal W)$ and is completely determined by that data.  The set of hyperplanes of $\widetilde X$ corresponds bijectively to $\mathcal W$, and two hyperplanes contact if and only if the corresponding walls are not separated by a third wall.  Two hyperplanes cross if and only if the corresponding walls cross.

In general, the non-canonical components of $C_1$ are 1-skeleta of CAT(0) cube complexes constructed from ``cubes at infinity''; their 0-cubes are consistent orientations of all walls that differ on infinitely many walls from any canonical 0-cube.

The above construction, when $\mathcal S$ is a finitely-generated group and the walls arise from codimension-1 subgroups, is due to Sageev~\cite{Sageev95}.  The general notion of a wallspace was first introduced in~\cite{HaglundPaulin98}.  Discussions of Sageev's construction in a general wallspace setting appear in~\cite{ChatterjiNiblo04},~\cite{NicaCubulating04} and~\cite{HruskaWise}.

Sageev's construction is sometimes given in terms of principal ultrafilters on the wallspace $(\mathcal S,\mathcal W)$.  We use the notation $W^+$ and $W^-$ for the halfspaces associated to the wall $W$.  In the following definition, these are merely notations; we are not, in the following definition, designating a map choosing a halfspace for each wall.

\begin{defn}\label{defn:ultrafilter}
An \emph{ultrafilter} on the wallspace $\left(\mathcal S,\mathcal W\right)$ is a set $\omega$ of halfspaces associated to walls in $\mathcal W$ subject to the following conditions:
\begin{enumerate}
\item For all walls $W$, exactly one of the following occurs: $W^+\in\omega$ or $W^-\in\omega$.
\item For any pair $W^+\subset V^+$ of nested halfspaces such that $W^+\in\omega$, we have $V^{+}\in\omega$, and likewise for the other halfspaces associated to $V,W$.
\end{enumerate}
For $s\in\mathcal S$, the \emph{principal ultrafilter $\omega_s$} associated to $s$ is the set of all halfspaces containing $s$.
\end{defn}

Definition~\ref{defn:ultrafilter} gives an equivalent construction of the cube complex dual to a wallspace.  First, note that any ultrafilter $\omega$ on $(\mathcal S,\mathcal W)$ corresponds to a 0-cube of $C_1$: the inclusion in $\omega$ of exactly one halfspace associated to each wall orients all of the walls.  The second condition in Definition~\ref{defn:ultrafilter} is a paraphrase of the consistency condition on orientations of the set of walls.  It is easily seen that the principal ultrafilter $\omega_s$ corresponds to the 0-cube that orients each wall toward the element $s\in\mathcal S$.  Moreover, the 0-cubes corresponding to the ultrafilters $\omega_1,\omega_2$ belong to the same component of the graph $C_1$ if and only if the symmetric difference $\omega_1\triangle\omega_2$ is finite.  Therefore, since any two elements of $\mathcal S$ are separated by finitely many walls, any two principal ultrafilters have finite symmetric difference and thus the corresponding 0-cubes belong to the same component of $C_1$.
\end{rem}

We will apply Sageev's construction later to establish a few statements about crossing graphs and contact graphs.

\subsection{Disc diagrams in CAT(0) cube complexes}
This subsection summarizes parts of the discussion of disc diagrams in CAT(0) cube complexes appearing in \cite{WiseIsraelHierarchy}.

\begin{defn}\label{defn:SquareDiagram}
Let $X$ be a nonpositively curved cube complex.  A \emph{disc diagram $D\rightarrow X$ in $X$} is a continuous combinatorial map of cube complexes, where $D$ is a \emph{disc diagram}: a contractible, finite, 2-dimensional cube complex equipped with a fixed (topological) embedding into $S^2$.  The \emph{area} of $D$ is the number of 2-cubes in $D$.

Since $D$ is contractible, the complement of $D$ in $S^2$ is a 2-cell whose attaching map is the \emph{boundary path} $\partial_pD$ of $D$.  If $D\rightarrow X$ is a disc diagram in $X$, then the restriction of this map to the boundary path of $D$ is a combinatorial path $\partial_pD\rightarrow X$.  Note that $\partial_pD$ may not be injective on 0-cubes or 1-cubes.  If $X$ is simply-connected, then any closed combinatorial path in $X$ is the boundary path of a disc diagram $D\rightarrow X$.

Fixing an immersed hyperplane $W$ of $X$, consider the set of midcubes in $D$ that map to $W$.  A maximal concatenation of such midcubes is a \emph{dual curve} $C$ in $D$ mapping to $W$.  Note that each dual curve is a singular curve: each 1-cube in $D$ has at most two incident 2-cubes, and thus each 0-cell of $C$ has valence at most 2, though $C$ may cross itself in the interior of one or more 2-cubes.

A 1-cube of $D$ whose midcube is contained in a dual curve $C$ is \emph{dual to $C$}.  An \emph{end} of a dual curve $C$ is a midpoint of a 1-cube of $\partial_pD$ dual to $C$.  The \emph{carrier} of the dual curve $C$ is the union of closed 2-cubes of $D$ that contain midcubes belonging to $C$.

A dual curve $C$ with 0 ends is a \emph{nongon}. If $C$ is not a nongon, then it has two ends.  A \emph{monogon} is a closed subpath of a dual curve that crosses itself in the initial 2-cube of its carrier, which is equal to the terminal 2-cube.  Any dual curve that crosses itself contains a monogon.  An \emph{oscugon} is a closed subpath $C'$ of a dual curve $C$ such that $C'$ does not self-cross, such that the two distinct terminal 1-cubes of $c$ have a common 0-cube but do not form the corner of a 2-cube in $D.$  A \emph{bigon} is a pair of dual curves that cross in two distinct squares of $D$.  See Figure~\ref{fig:immense}.
\begin{figure}[h]
  \includegraphics[width=3in]{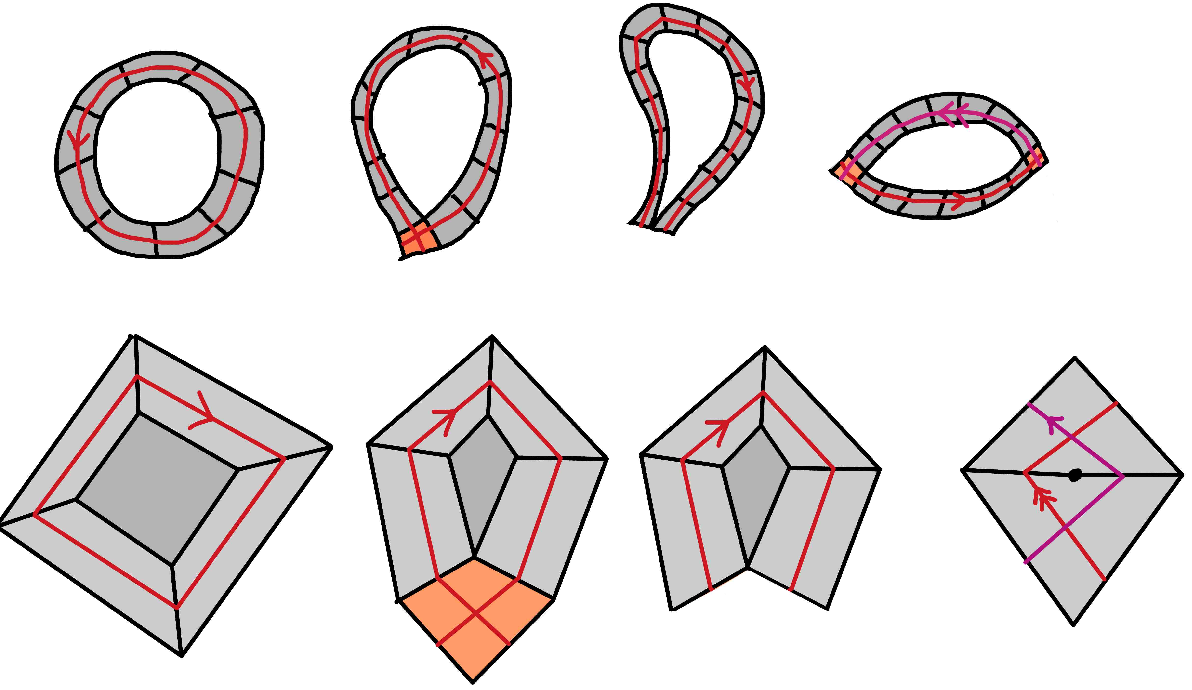}\\
  \caption{Left to right at the top are heuristic pictures of the carriers of a: nongon, monogon, oscugon, and bigon. Below each of these figures is an actual disc diagram containing the corresponding configuration.  In both sets of pictures, the dual curve itself is decorated with an arrow.}\label{fig:immense}
\end{figure}
If $C$ is a dual curve in $D$ whose ends lie on subpaths $P,Q$ of $\partial_pD$, it is often convenient to say that $K$ \emph{emanates} from $P$ and \emph{terminates} on $Q$ (or vice versa), or that $K$ \emph{travels from $P$ to $Q$.}
\end{defn}

\subsection{Complexity reductions in disc diagrams}
The techniques used to prove the following lemma are discussed in detail in~\cite{WiseIsraelHierarchy} and were developed from ideas of Casson (see~\cite{Sageev95}).

\begin{lem}[\cite{WiseIsraelHierarchy}]\label{lem:minareaDD}
Let $P\rightarrow X$ be a closed combinatorial path in a nonpositively curved cube complex $X$ and let $D\rightarrow X$ be a minimal-area disc diagram among all diagrams $D'$ with $\partial_pD'=P$.  Then $D$ contains no nongons, monogons, oscugons, or bigons.
\end{lem}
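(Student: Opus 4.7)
The plan is proof by contradiction: assume $D$ is a minimum-area disc diagram with $\partial_p D = P$ but contains one of the four forbidden configurations, and exhibit a disc diagram $D'$ with $\partial_p D' = P$ and $\area(D') < \area(D)$. In each case the strategy is a local surgery: identify a subdiagram of $D$ containing the configuration and replace it with a subdiagram of strictly smaller area realizing the same boundary subpath in $X$. The two main geometric inputs are the product structure $N(\overline W) \cong \overline W \times [-\tfrac{1}{2},\tfrac{1}{2}]$ on hyperplane carriers (Theorem~\ref{thm:HyperplaneProperties}(1)) and the flag-complex condition on vertex links of $X$ (nonpositive curvature).

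I would handle the \textbf{nongon} case first, since it is the most transparent. After choosing $C$ innermost, $C$ is a simple closed dual curve in the disc $D$ bounding a subdisc $E \subseteq D$, and the carrier of $C$ within $E$ is an annular belt $A$ of 2-cubes whose two boundary 1-cycles $\alpha, \beta$ map to parallel paths on opposite sides of $\overline W$ inside $N(\overline W)$, related by a combinatorial isomorphism $\phi \colon \alpha \to \beta$ that commutes with the map to $X$. Excising the interior of $A$ from $D$ and regluing $E \setminus A$ onto $D \setminus E$ via $\phi$ yields a disc diagram with boundary $P$ and area $\area(D) - |A|$. For a \textbf{bigon} --- two dual curves $C_1, C_2$ crossing in two distinct 2-cubes --- the crossings bound a lens-shaped subdiagram of $D$; a standard hexagon move slides $C_1$ past $C_2$ through the lens, reducing area by at least 2 while preserving the lens's boundary and hence $P$. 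For a \textbf{monogon} (a self-crossing of a dual curve in a single 2-cube $s$), a small local collapse near $s$ reduces area by using the product structure on $N(\overline W)$ to re-route the offending dual curve. For an \textbf{oscugon}, the two terminal 1-cubes of the offending dual subcurve share a 0-cube of $D$ and are both dual to the same immersed hyperplane $\overline W$; the flag-complex condition on the link of the corresponding 0-cube of $X$ supplies a 2-cube of $X$ containing both 1-cubes as adjacent edges, and pulling this back to $D$ inserts a 2-cube that converts the oscugon into a monogon or bigon, which is then reduced as above.

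The main obstacle is verifying that each local surgery produces a \emph{valid} disc diagram over $X$: the underlying 2-complex must remain a contractible 2-complex embedded in $S^2$, the combinatorial map into $X$ must remain well-defined as a cubical map on each cube, and the boundary path must still be $P$. For the nongon, this is essentially automatic from the product structure on $N(\overline W)$. For the oscugon and monogon reductions, the key input is the flag-link condition, which ensures that the 2-cubes inserted or contracted during surgery are legitimate cubes of $X$. A secondary concern is that a single surgery may introduce new forbidden configurations, so one must observe that each reduction strictly decreases area to argue that finitely many iterations produce a valid minimum-area diagram with none of the four configurations present.
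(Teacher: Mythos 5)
The paper does not actually contain a proof of this lemma: it attributes the statement to Wise (citing \cite{WiseIsraelHierarchy}) and remarks only that the argument uses \emph{cancellable pairs} and \emph{hexagon moves}, so there is no in-text argument to compare against. Judged on its own terms, your sketch has a genuine gap in the nongon step. You correctly observe that the two boundary 1-cycles $\alpha,\beta$ of the carrier annulus map to \emph{parallel paths on opposite sides of $\overline W$}, but then assert that the combinatorial isomorphism $\phi\colon\alpha\to\beta$ ``commutes with the map to $X$.'' These two claims contradict each other: the images of $\alpha$ and $\beta$ in $X$ differ by the $I$-coordinate of $N(\overline W)\cong\overline W\times[-\tfrac12,\tfrac12]$, so $\phi$ does not intertwine the two maps to $X$, and the proposed excise-and-reglue therefore does not define a cubical map $D'\to X$. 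Nor can you repair this by ``shifting $E\setminus A$ across $\overline W$,'' since $E\setminus A$ does not map into $N(\overline W)$ --- only its boundary $\alpha$ does. The standard treatment instead argues indirectly: in an innermost nongon, each transverse dual curve through a square of the belt has both ends on the inner boundary $\alpha$ (otherwise it is itself a nongon), hence crosses $C$ in two distinct squares of the belt, producing a bigon; so a nongon in a bigon-free diagram is impossible. Nongons, monogons and oscugons are eliminated by this kind of reduction to a previously-excluded configuration, not by a regluing.

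Two of the remaining steps also don't work as written. For the oscugon you invoke the flag condition to ``supply'' a 2-cube of $X$ containing $\bar e_1,\bar e_2$ as adjacent edges, but the flag condition only promotes a set of \emph{pairwise-adjacent} vertices of a link to a simplex; it cannot create adjacency between two vertices of the link, so it does not produce the missing square (and indeed, when $\overline W$ self-osculates, no such square exists). Moreover, inserting a square strictly \emph{increases} area, so even when the square exists, the claimed ``convert to monogon/bigon then reduce'' scheme needs a separate accounting that the subsequent reduction removes more than one square; your ``each reduction strictly decreases area'' safety net does not cover this step. For the monogon, a ``small local collapse near $s$'' cannot suffice: a monogon is a global configuration --- a closed subpath of a dual curve subtending an entire subdiagram of $D$ --- and any removal must account for that whole subdiagram, typically by exhibiting a bigon or an innermore forbidden configuration inside it. Finally, a minor point on the bigon: hexagon moves are area-\emph{preserving}; the area drop comes from the terminal removal of a cancellable pair, and the hexagon moves require 3-cubes of $X$ to exist above the relevant configurations, which must be justified rather than assumed.
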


We refer the reader to~\cite{WiseIsraelHierarchy} for a discussion of \emph{cancellable pairs} and \emph{hexagon moves} in disc diagrams over nonpositively-curved cube complexes, which are used in the proof of Lemma~\ref{lem:minareaDD}.  In Sections~\ref{sec:sphere} and~\ref{sec:quasitree}, we will study a particular type of disc diagram and will apply slightly different techniques than those used in the proof of Lemma~\ref{lem:minareaDD}.  In particular, while a combination of hexagon moves and cancellable pair removals is used in~\cite{WiseIsraelHierarchy} to modify a disc diagram without affecting its boundary path, we will sometimes make certain changes to the boundary path, as follows.

Let $H_0\coll H_1\coll\ldots H_{n-1}\coll H_0$ be (not necessarily pairwise distinct) hyperplanes contacting (at least) as indicated.  Then we can choose, for each $i\in\integers_n$, a combinatorial geodesic $P_i\rightarrow N(H_i)$ so that there is a closed path $P\rightarrow\widetilde X$ that is the concatenation $P=\prod_{i=1}^{n-1}P_i$.  Since $\widetilde X$ is a CAT(0) cube complex, there is a disc diagram $D\rightarrow\widetilde X$ such that $\partial_pD=P$.  This situation is shown schematically in Figure~\ref{fig:fixedcorners}.
\begin{figure}[h]
  \includegraphics[width=2.5in]{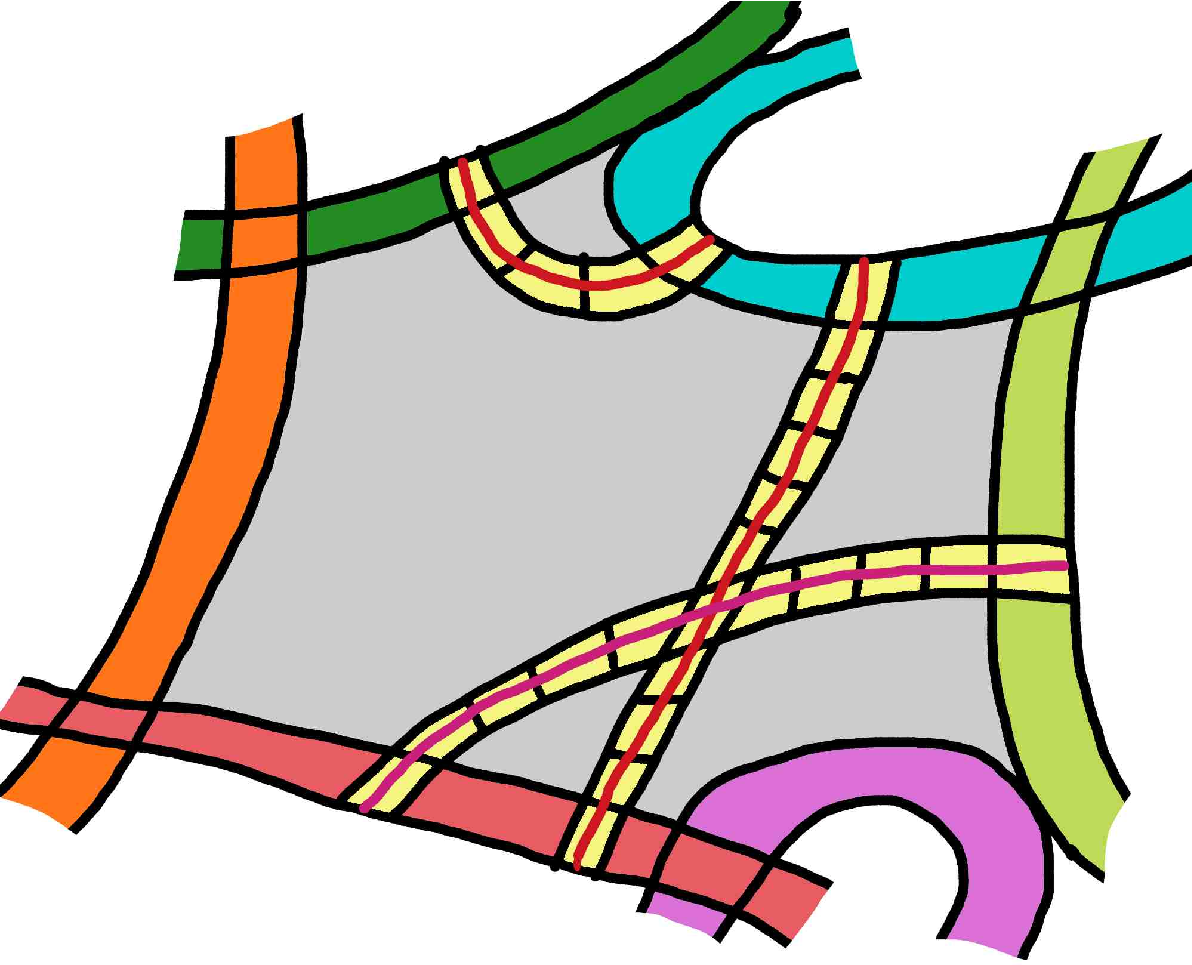}\\
  \caption{This heuristic picture shows The image of a disc diagram whose boundary path is the concatenation of geodesic segments lying on a fixed collection of hyperplane carriers, along with parts of those carriers.  The two configurations of dual curves precluded by Lemma~\ref{lem:fixedcorners} in the minimal-complexity case are shown.}\label{fig:fixedcorners}
\end{figure}

In our applications, the collection $\{H_i\}$ of hyperplanes is fixed.  Given such a collection $\{H_i\}$ of hyperplanes, forming a closed path in the \emph{contact graph} (the intersection graph of the set of hyperplane-carriers), a disc diagram $D$ constructed as above is a \emph{diagram with fixed carriers} for the closed path $\sigma=H_0\coll H_1\coll\ldots H_{n-1}\coll H_0$.  Note that it is possible that $H_i\coll H_j$ for $|i-j|>1$, but this is not necessarily reflected in $D$.

The \emph{complexity} $\comp(D)$ of $D$ is the pair $(\area(D),|P|)$, taken in lexicographic order.  Suppose that $D$ is of minimal complexity among all diagrams bounded by paths $P$ that decompose in the above fashion, i.e. among all diagrams with fixed carriers for $\sigma$.  In particular, $D$ is of minimal area among all disc diagrams with boundary path $P$, so that, by Lemma~\ref{lem:minareaDD}, $D$ does not contain any nongons, monogons, oscugons, or bigons.

Let $K$ be a dual curve in $D$.  Then $K$ has one end on $P_i$ and another end on $P_j$.  Since each of $P_i$ and $P_j$ is a geodesic, we must have $i\neq j$, for otherwise $P$ would contain two distinct 1-cubes dual to the same hyperplane, namely the hyperplane to which $K$ maps.

Suppose that $K$ is a dual curve traveling from $P_i$ to $P_j$, and $K'$ a dual curve traveling from $P_i$ to some $P_k$, such that $K$ and $K'$ cross in a 2-cube $s$ of $D$, as at the bottom of Figure~\ref{fig:fixedcorners}.  Let $P'_i$ be the smallest connected subpath of $P_i$ containing the 1-cubes of $P_i$ dual to $K$ and $K'$.  Let $Q$ and $Q'$ be shortest combinatorial paths in $D$ that start at the ends of $P'_i$, travel along the carriers of $K$ and $K'$ respectively, meeting at the corner of $s$ that is separated from $P_i$ by $K$ and $K'$.  Let $E$ be the subdiagram of $D$ bounded by $P'_i,Q$ and $Q'$.  See Figure~\ref{fig:diagramEF}, at left.  If $C$ is a dual curve in $E$ emanating from $P'_i$, then $C$ cannot end on $P'_i$ by the fact that $P'_i$ is a geodesic, and hence $C$ crosses $K$ or $K'$.  Suppose the former.  Then $K$ and $C$ form a triangle of dual curves in $D$ that is properly contained in $E$.  Hence, by choosing an innermost such triangle, we may assume that $|P'_i|=2$.  Hence $P_i$ contains a path $c_1c_2$, where $c_1$ and $c_2$ are 1-cubes of $N(H_i)$ that form the corner of a 2-cube $s$ in $D$, as at right in Figure~\ref{fig:diagramEF}.  Let $Q_i$ be the path in $\widetilde X$ obtained by removing the subpath $c_1c_2$ from $P_i$ and replacing it by $c'_1c'_2$, where $c'_1$ is the 1-cube of $s$ opposite to $c'_2$ and $c'_2$ the 1-cube of $s$ opposite to $c_1$.
\begin{figure}[h]
  \includegraphics[width=2.5in]{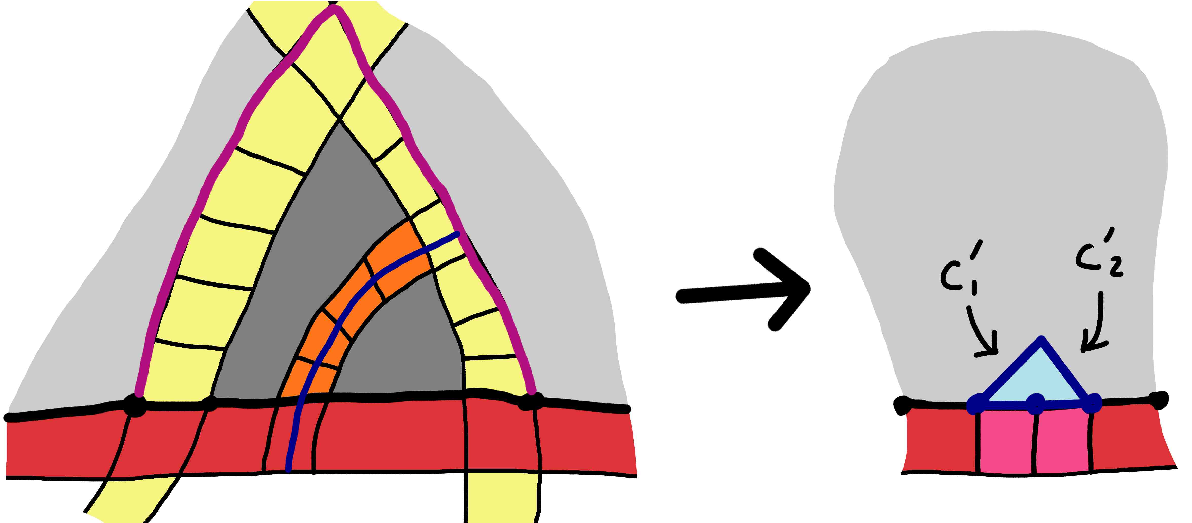}\\
  \caption{The diagram $E$ arising when two dual curves emanating from $P_i$ cross.}\label{fig:diagramEF}
\end{figure}

Note that $|Q_i|=|P_i|$ and $Q_i$ has the same endpoints as $P_i$; in particular, $Q_i$ is a geodesic.  Moreover, $Q_i$ maps to $N(H_i)$.  To see this, it suffices to show that $c'_1$ and $c'_2$ map to $N(H_i)$.  The 1-cubes $c_1$ and $c'_2$ are dual to a hyperplane $W_1$ and $c'_1$ and $c_2$ are dual to a hyperplane $W_2$.  If $W_1=H_i$, then $c'_2$ is dual to $H_i$, whence $s$, and thus $c'_1$, maps to $N(H_i)$.  Hence suppose that $W_1,W_2$, and $H_i$ are all distinct.  Then $W_1,W_2$ cross $H_i$ and $W_1$ crosses $W_2$ in the 2-cube $s$.  By nonpositive curvature, $s$ lies in a 3-cube of $N(W_1)\cap N(W_2)\cap N(H_i)$ and in particular $Q_i\rightarrow N(H_i)$.

By removing $s$ from $D$ and replacing $P_i$ with $Q_i$, we replace $D$ by a proper subdiagram diagram $D'$ that has fixed carriers for $\{H_i\}$, so that $\comp(D')<\comp(D)$, contradicting the minimality of $D$.  Hence no two dual curves in $D$ emanating from any $P_i$ can cross.

Now consider the case in which $K$ emanates from $P_i$ and terminates on $P_{i+1}$.  This is shown at the top of Figure~\ref{fig:fixedcorners} and is enlarged in Figure~\ref{fig:fixedcorners2}.  Let $F$ be the subdiagram of $D$ between and including the carrier of $K$ and the subtended parts of $P_i,P_{i+1}$.  Suppose also that $K$ is innermost, in the sense that no dual curve $L$ in $F$ travels from $P_i$ to $P_{i+1}$ (otherwise, we could argue using $L$ instead of $K$).  If there is a dual curve $K'$ in $F$, distinct from $K$, emanating from $P_i$ or $P_{i+1}$, then $K'$ crosses $K$ in $D,$ contradicting minimality of the complexity, by the preceding argument.  Hence $|K|=0$ and $P_i$ and $P_{i+1}$ have a common 1-cube $c$ dual to the hyperplane to which $K$ maps, as at right in Figure~\ref{fig:fixedcorners2}.
\begin{figure}[h]
  \includegraphics[width=2.5in]{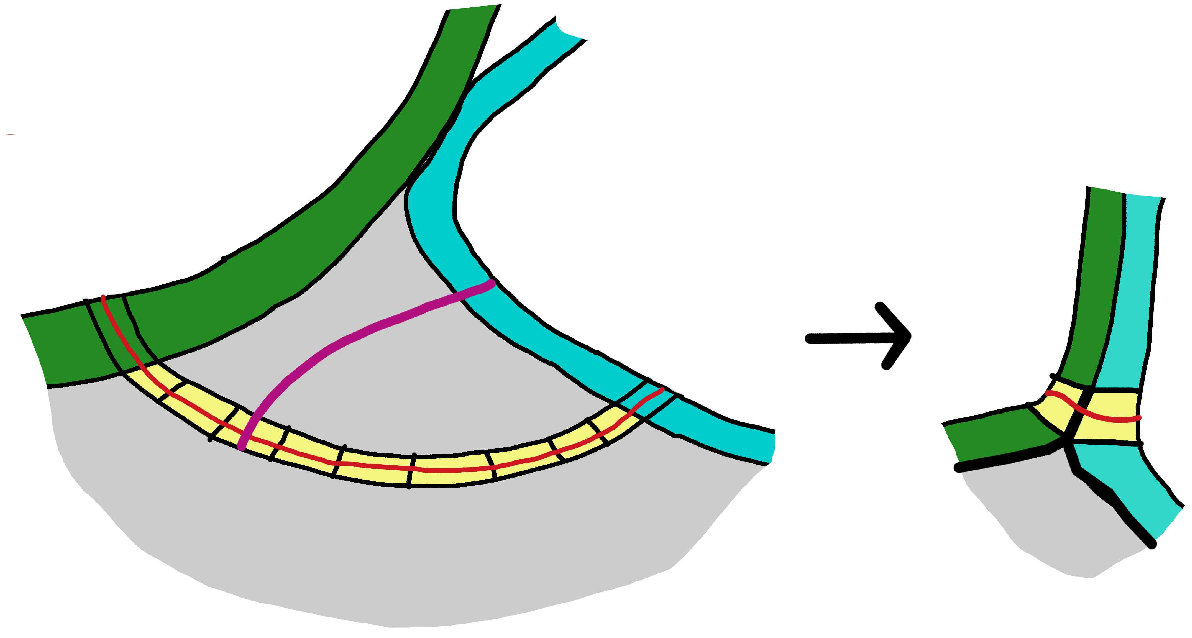}\\
  \caption{Chopping off a spur using the diagram $F$.}\label{fig:fixedcorners2}
\end{figure}
The minimal case is that in which the 1-cube $c$ is a \emph{spur} in the language of~\cite{WiseIsraelHierarchy}, i.e. $\partial_pD$ contains the path $cc^{-1}$. By removing $c$ from $P_i$ and $P_{i+1}$, we obtain a subdiagram $D'$ of $D$ that has fixed carriers for $\sigma$ but that has lower complexity.  If not, then since every dual curve in $F$ travels from $P_i$ to $P_{i+1}$, there is a spur in $F,$ which we find by noting that the terminal 1-cube of $P_i$ coincides with the initial 1-cube of $P_{i+1}$.  In either case, we can remove the part of $P_iP_{i+1}$ between the two occurrences of $c$ and lower the complexity of $D$ while preserving the fixed carriers.  We have thus proved:

\begin{lem}\label{lem:fixedcorners}
Let $\sigma=H_0\coll H_1\coll\ldots\coll H_{n-1}\coll H_0$ be a closed path in the contact graph and let $D$ be a diagram with fixed carriers for $\sigma$.  Suppose that $D$ is of minimal complexity among all diagrams with fixed carriers for $\sigma$, and let $\partial_pD=P_0P_1\ldots P_{n-1}$ be the boundary path, where each $P_i\rightarrow N(H_i)$ is a combinatorial geodesic.  Then:
\begin{enumerate}
\item For all $i\in\integers_n$, no dual curve $K$ emanating from $P_i$ terminates on $P_{i\pm 1}$.
\item If $K,K'$ are dual curves emanating from $P_i$, then $K$ and $K'$ do not cross.
\end{enumerate}
\end{lem}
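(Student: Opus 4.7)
The plan is, in each case, to assume the conclusion fails and to construct a new diagram $D'$ with fixed carriers for $\sigma$ such that $\comp(D')<\comp(D)$, contradicting minimality. Two preliminary observations frame the argument: since each $P_i$ is a geodesic, no dual curve of $D$ can have both ends on the same $P_i$; and since $D$ has minimal area among all disc diagrams with boundary $\partial_pD$ (any strictly-lower-area such diagram would also have fixed carriers for $\sigma$), Lemma~\ref{lem:minareaDD} precludes nongons, monogons, oscugons, and bigons in $D$.

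I would prove (2) first, since the argument for (1) invokes it. Suppose $K,K'$ emanate from $P_i$ and cross in a 2-cube $s$. Let $P_i'$ be the shortest subpath of $P_i$ containing their feet, let $Q,Q'$ be shortest paths along the carriers of $K,K'$ meeting at the corner of $s$ separated from $P_i'$ by $K\cup K'$, and let $E$ be the subdiagram enclosed by $P_i'\cup Q\cup Q'$. Any dual curve $C$ in $E$ that emanates from $P_i'$ cannot return to $P_i'$, and so must cross $K$ or $K'$, producing a smaller triangle properly inside $E$. Choosing an innermost such triangle reduces to the case $|P_i'|=2$, so $P_i$ contains a corner $c_1c_2$ of a 2-cube $s$ in $D$. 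I define $Q_i$ by replacing $c_1c_2$ in $P_i$ with the opposite corner $c_1'c_2'$ of $s$; it has the same endpoints and length as $P_i$, so it remains a geodesic. Provided $Q_i\to N(H_i)$, excising $s$ from $D$ and swapping $P_i$ for $Q_i$ yields a diagram $D'$ with fixed carriers for $\sigma$ and $\area(D')<\area(D)$, the desired contradiction.

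The verification that $Q_i\to N(H_i)$ is the geometric heart of the argument, and I expect it to be the main technical obstacle. Letting $W_1,W_2$ be the hyperplanes to which $c_1,c_2$ are dual, the case $W_1=H_i$ or $W_2=H_i$ is immediate. Otherwise $W_1,W_2,H_i$ are pairwise distinct, the 2-cube $s$ witnesses $W_1\bot W_2$, and the inclusion of $c_1,c_2$ in $N(H_i)$ gives $W_1\bot H_i$ and $W_2\bot H_i$. The flag condition in the link of the common 0-cube of $c_1$ and $c_2$ then produces a 3-cube extending $s$ in the direction of $H_i$ and lying in $N(W_1)\cap N(W_2)\cap N(H_i)$; the face of this 3-cube opposite $s$ contains $c_1'c_2'$, so $Q_i$ maps into $N(H_i)$ as required.

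For (1), suppose some dual curve $K$ emanates from $P_i$ and terminates on $P_{i+1}$; choose $K$ innermost, and let $F$ be the subdiagram bounded by the carrier of $K$ together with the subpaths of $P_i,P_{i+1}$ from the feet of $K$ to the vertex $P_i\cap P_{i+1}$. Any dual curve $K'\neq K$ inside $F$ cannot have both ends on the same $P_j$ (by geodesicity), cannot run from $P_i$ to $P_{i+1}$ (by innermostness of $K$), and any end on the carrier of $K$ forces $K'$ to cross $K$, which is excluded by (2) applied to $P_i$ or $P_{i+1}$. Hence $F$ contains no dual curves apart from $K$, so $|K|=0$: the terminal 1-cube of $P_i$ and the initial 1-cube of $P_{i+1}$ coincide in a single 1-cube $c$, producing a spur $cc^{-1}$ in $\partial_pD$. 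Deleting this spur yields $D'$ with fixed carriers for $\sigma$, of the same area as $D$ but with $|\partial_pD'|<|\partial_pD|$, a final contradiction.
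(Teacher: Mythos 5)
Your proof is correct and follows essentially the same approach as the paper: you establish (2) first via the same innermost-triangle reduction to a single corner square $s$, verify $Q_i\to N(H_i)$ via the 3-cube produced by the flag condition in the link of the common 0-cube (which the paper compresses to ``by nonpositive curvature''), and then derive (1) from (2) by the same argument that $F$ carries no dual curve other than $K$, forcing a spur. The only stylistic difference is that you state the logical ordering ``(2) then (1)'' up front, whereas the paper weaves the two paragraphs together in the running text preceding the lemma.
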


We emphasize that it is possible for $K$ to emanate from $P_i$ and terminate on the next positive-length labeled subpath of the path $P$.  More precisely, if $|P_{i+1}|=0$, we must still treat $P_{i+1}$ as one of the designated geodesic subpaths of $P$, since $D$ has fixed carriers.  In this case, as above, $P_i$ and $P_{i+2}$ intersect in a spur $c$ mapping to a 1-cube dual to the hyperplane to which $K$ maps, but we cannot remove $c$, since that would remove the path $P_{i+1}$ (which is an endpoint of $c$) and result in a diagram that does not have fixed carriers.  However, in such a situation, we reach a conclusion that $H_i\coll H_{i+2}$.

\subsection{Crossing graphs and contact graphs}
Unless stated otherwise, graphs in this paper are simplicial in the sense that they have no loops or multi-edges.  Also, graphs have the combinatorial metric, with all edges of length 1.

\begin{defn}\label{defn:fullballsetc}
If $\Phi$ is a subgraph of a graph $\Lambda$, then $\full(\Phi)$ denotes the full subgraph of $\Lambda$ generated by the vertices of $\Phi$.

Let $v$ be a vertex of $\Lambda$ and let $n\geq 0$.  The \emph{full ball} $\bar B_n(v)=\full\left(\left\{w\in\Phi^{(0)}\,:\,d(v,w)\leq n\right\}\right)$.

The \emph{full sphere} $\bar S_n(v)$ denotes the full subgraph of $\Lambda$ generated by vertices at distance exactly $n$ from $v$.
\end{defn}

We shall often need the following facts about subcomplexes of cube complexes.  Let $Y\subset\widetilde X$ be an isometrically embedded subcomplex of the CAT(0) cube complex $\widetilde X$.  Then for each hyperplane $H$ of $\widetilde X$, either $H\cap Y=\emptyset$, or $H\cap Y$ is a connected subspace of $Y$ such that $Y-(H\cap Y)$ has two components, one in each halfspace associated to $H$.  In the latter case, $H$ \emph{crosses} $Y.$  Conversely, if $Y\subset\widetilde X$ is connected and has the property that $H\cap Y$ is connected (or empty) for each hyperplane $H$, then $Y\hookrightarrow\widetilde X$ is an isometric embedding.

Indeed, if $Y\cap H$ is connected for each $H$, let $a,b\in Y$ be 0-cubes.  Let $P$ be a geodesic of $Y$ joining $a,b$.  Suppose that $P=P_1c_1P_2c_2P_3$, where $c_1,c_2$ are 1-cubes dual to the same hyperplane $H$.  Then there is a sequence $c_1=d_0,d_1,d_2,\ldots,d_k,d_{k+1}=c_2$ of 1-cubes, all dual to $H$, such that $d_i$ and $d_{i-1}$ lie on opposite sides of the same 2-cube of $N(H)$ and the midcube of each $d_i$ lies in $H\cap Y$.  Since $Y$ is a subcomplex, each $d_i\subset Y$, and thus there is a geodesic $Q\rightarrow N(H)\cap Y$ joining the initial 0-cube of $c_1$ to the terminal 0-cube of $c_2$, such that $Q$ does not cross $H$.  On the other hand, $Qc_1P_2c_2$ bounds a minimal disc diagram in which all dual curves emanating from $Q$ end on $P_2$, and thus $|Q|\leq|P_2|$.  But then $P_1QP_3$ is a path in $Y$ joining $a,b$ with $|P_1QP_3|<|P|$, a contradiction.  Hence every 1-cube of $P$ is dual to a distinct hyperplane, whence $P$ is a geodesic of $\widetilde X$ and thus $Y$ is isometrically embedded.

Lemma~\ref{lem:convexsubwallspace} says that a locally convex subcomplex of a CAT(0) cube complex is convex, and can be proved using disc diagrams.

\begin{lem}\label{lem:convexsubwallspace}
Let $\widetilde X$ be a CAT(0) cube complex with a set $\mathcal W$ of hyperplanes.  Let $C\subseteq\widetilde X$ be a connected subcomplex and let $\mathcal W'$ be the set of hyperplanes of $\widetilde X$ that cross $C$.  Then the following are equivalent:
\begin{enumerate}
  \item The subcomplex $C\subset\widetilde X$ is convex.
  \item Distinct hyperplanes $V,W\in\mathcal W'$ cross in $\widetilde X$ if and only if they cross in $C$, i.e. for some 2-cube $s$ dual to $V$ and $W$, we have $s\subset C$.
\end{enumerate}
\end{lem}

Lemma~\ref{lem:convexsubwallspace} can be proved using minimal-area disc diagrams and hexagon moves; see~\cite{WiseIsraelHierarchy}.

\begin{rem}\label{rem:contactingandconvexity}
More generally, an isometrically embedded subcomplex $C$ is convex if and only if any pair of hyperplanes $V,W$ of $\widetilde X$ such that $N(V)\cap C$ and $N(W)\cap C$ are both nonempty contact if and only $N(V)\cap N(W)\cap C\neq\emptyset$.  This is a special case of Helly's theorem for CAT(0) cube complexes, which is stated below and which is discussed in, for example,~\cite{RollerHabilitation}.  Helly's theorem appears in many different contexts.  For example, convexity and the Helly property are discussed in the context of median spaces in van de Vel's book~\cite{vanderVelBook}.

Chepoi has also pointed out in private communication that Lemma~\ref{lem:HellysTheorem} also follows from the median property of $\widetilde X^{(1)}$.  Indeed, convex subsets of a median graph are \emph{gated}, collections of gated subsets enjoy the Helly property.
\end{rem}

\begin{lem}\label{lem:HellysTheorem}
Let $\widetilde X$ be a CAT(0) cube complex and let $Y_1,Y_2,\ldots, Y_n$ be a finite collection of convex subcomplexes of $\widetilde X$.  Suppose that $Y_i\cap Y_j\neq\emptyset$ for all $1\leq i\leq j\leq n$.  Then $\bigcap_iY_i\neq\emptyset$.
\end{lem}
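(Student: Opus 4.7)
The plan is to induct on $n$, taking $n=2$ as given and reducing every step to the median property of $\widetilde X^{(1)}$ established in \cite{ChepoiMedian}. The fundamental fact I will use is that every convex subcomplex $Y\subset\widetilde X$ is closed under the median operation: if $x,y,z$ are 0-cubes of $Y$ then $m(x,y,z)\in Y$. This is immediate from convexity, since $m(x,y,z)$ lies on some combinatorial geodesic from $x$ to $y$, and by Lemma~\ref{lem:convexsubwallspace} (together with the description of geodesics in terms of non-repeating dual hyperplanes) such a geodesic stays in $Y$.

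For the base case beyond $n=2$, I handle $n=3$ directly. Choose 0-cubes $x_{12}\in Y_1\cap Y_2$, $x_{13}\in Y_1\cap Y_3$, and $x_{23}\in Y_2\cap Y_3$, and form the median $m:=m(x_{12},x_{13},x_{23})$. Since $x_{12},x_{13}\in Y_1$ and $Y_1$ is convex, the median-closure property forces $m\in Y_1$; the symmetric arguments with $Y_2$ and $Y_3$ then show $m\in Y_1\cap Y_2\cap Y_3$.

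For the inductive step with $n\geq 4$, assume the conclusion whenever at most $n-1$ pairwise-intersecting convex subcomplexes are given. Define $Y'_i:=Y_i\cap Y_n$ for $i=1,\ldots,n-1$. Each $Y'_i$ is convex, being an intersection of two convex subcomplexes. Moreover, for any $i,j<n$, the $n=3$ case applied to the triple $(Y_i,Y_j,Y_n)$ yields a 0-cube in $Y_i\cap Y_j\cap Y_n = Y'_i\cap Y'_j$, so the family $\{Y'_i\}_{i<n}$ is pairwise-intersecting. The induction hypothesis applied to this family of $n-1$ convex subcomplexes produces a 0-cube common to all $Y'_i$, and hence to all $Y_i$.

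The main obstacle is simply verifying carefully that convex subcomplexes are median-closed; this is the only nontrivial input, and once it is in hand, the rest of the argument is formal. An alternative to the whole scheme, as noted in Remark~\ref{rem:contactingandconvexity}, is to appeal to the fact that convex subcomplexes of a median graph are gated and that gated subsets enjoy the Helly property in general median graphs, which would replace the direct induction above by a single citation to the median-graph literature.
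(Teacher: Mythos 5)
Your proof is correct, and it follows the route the paper itself points to: the paper states Lemma~\ref{lem:HellysTheorem} without an explicit proof, deferring to references (\cite{RollerHabilitation}, \cite{vanderVelBook}) and noting in Remark~\ref{rem:contactingandconvexity} that the result follows from the fact that convex subsets of a median graph are gated and gated families are Helly. What you have done is make that sketch self-contained by reducing everything to the median closure of convex subcomplexes and an induction.

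One small imprecision worth flagging: in the $n=3$ step you invoke ``the median-closure property'' with only two of the three input points ($x_{12},x_{13}$) known to lie in $Y_1$, whereas median closure as you phrased it requires all three. But the fact you actually need -- and which your first paragraph establishes -- is the stronger statement that $m(x,y,z)$ lies on \emph{some} geodesic from $x$ to $y$ (because $d(x,y)=d(x,m)+d(m,y)$), and convexity of $Y_1$ then forces $m\in Y_1$ from $x_{12},x_{13}\in Y_1$ alone. So the argument is sound; only the label is a little off. Apart from that, the induction is airtight: $Y_i\cap Y_n$ is convex as an intersection of convex subcomplexes, pairwise intersection of the $Y'_i$ follows from the $n=3$ case applied to $(Y_i,Y_j,Y_n)$, and the inductive hypothesis finishes it. This is a legitimate complete proof of the Helly lemma by the same mechanism the paper alludes to.
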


\begin{defn}[Contact graph, crossing graph]\label{defn:hyperplaneandwallgraphs}
Let $\widetilde X$ be a CAT(0) cube complex.  The \emph{contact graph} $\Gamma$ of $\widetilde X$ is the graph whose vertices are the hyperplanes of $\widetilde X$, with hyperplanes $V$ and $W$ joined by an edge if and only if $V\coll W$.  Equivalently, $\Gamma$ is the nerve of the covering of $\widetilde X$ by the set of hyperplane carriers.  The \emph{crossing graph} $\Delta$ of $\widetilde X$ is the subgraph of $\Gamma$ containing all of the vertices, with $V$ and $W$ joined by an edge exactly when $V\bot W$.  When discussing these graphs, the terms ``vertex'' and ``hyperplane'' are used interchangeably.

While $\Gamma$ is always connected, $\Delta$ may not be, as in the following example.
\end{defn}

\begin{exmp}\label{exmp:hwgraphexmp}
When $\widetilde X$ is a tree, the hyperplanes are the midcubes of the edges.  The vertices of $\Gamma$ correspond to the 1-cubes of $\widetilde X$, with two vertices adjacent exactly when the corresponding 1-cubes have a common 0-cube.  In particular, for each 0-cube of $\widetilde X$, the contact graph contains a complete graph whose vertex-set has cardinality equal to the valence of that 0-cube.  The crossing graph $\Delta$ has no edges.

If $\widetilde X$ is a cube, $\Delta=\Gamma$ is the complete graph on the set of midcubes.

If $\widetilde X$ is the standard tiling of $\reals^n$ by $n$-cubes, then $\Delta$ is a complete $n$-partite graph with each class of the $n$-partition an order-isomorphic copy of $\integers$, where the hyperplanes in each class are ordered by designating a halfspace for each in such a way that the designated halfspaces are totally ordered by inclusion.  Adding to $\Delta$ an edge between consecutive vertices in each class gives the contact graph $\Gamma$.  More generally, if $\widetilde X$ and $\widetilde Y$ are CAT(0) cube complexes, then the contact graph of $\widetilde X\times\widetilde Y$ is the join of the contact graphs of the factors.
\end{exmp}

The following basic notion is useful in Section~\ref{sec:planarbipartiteII}.

\begin{defn}[Inseparable set of hyperplanes]\label{defn:inseparable}
Let $\mathcal W$ be the set of hyperplanes in the CAT(0) cube complex $\widetilde X$.  The set $\mathcal W'\subseteq\mathcal W$ is \emph{inseparable} if, for any two $W_1,W_2\in\mathcal W'$, no hyperplane $W_3\in\mathcal W-\mathcal W'$ separates $W_1$ from $W_2$.
\end{defn}

The following proposition shows that the class of graphs that are crossing graphs of CAT(0) cube complexes is very large.

\begin{prop}\label{prop:cubulatinggraphs}
For any simplicial graph $\Delta$, there exists a CAT(0) cube complex $\widetilde X$ whose crossing graph is $\Delta$.
\end{prop}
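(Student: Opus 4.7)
The plan is to construct a wallspace whose crossing pattern of walls exactly mirrors the adjacency relation of $\Delta$, and then invoke Sageev's construction (Remark~\ref{rem:SageevsConstruction}), which guarantees that the dual cube complex has a crossing graph isomorphic to the ``crossing graph of the walls.''

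Concretely, I would let $\mathcal S$ be the set of all finite cliques of $\Delta$ (including the empty clique), and for each vertex $v$ of $\Delta$ introduce a wall $W_v$ with halfspaces
\[
W_v^+ \;=\; \{K\in\mathcal S : v\in K\}, \qquad W_v^- \;=\; \{K\in\mathcal S : v\notin K\}.
\]
Both halfspaces are nonempty since $\{v\}\in W_v^+$ and $\emptyset\in W_v^-$. Two cliques $K_1,K_2\in\mathcal S$ are separated by precisely those walls $W_v$ with $v\in K_1\triangle K_2$, a finite set since $K_1$ and $K_2$ are finite. Hence $(\mathcal S,\mathcal W)$ with $\mathcal W=\{W_v : v\in\Delta^{(0)}\}$ is a wallspace in the sense of Definition~\ref{defn:wallspace}.

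Next I would verify that $W_u$ and $W_v$ cross (as walls) if and only if $u$ and $v$ are adjacent in $\Delta$. The four quarterspaces are
\[
W_u^+\cap W_v^+ = \{K\in\mathcal S : u,v\in K\}, \quad W_u^+\cap W_v^-\ni\{u\}, \quad W_u^-\cap W_v^+\ni\{v\}, \quad W_u^-\cap W_v^-\ni\emptyset.
\]
The last three are always nonempty, while the first is nonempty precisely when $\{u,v\}$ is a clique, i.e.\ when $u$ and $v$ span an edge of $\Delta$. Thus the ``crossing graph of the walls'' is canonically identified with $\Delta$.

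Finally, let $\widetilde X$ be the CAT(0) cube complex dual to $(\mathcal S,\mathcal W)$ furnished by Sageev's construction. By Remark~\ref{rem:SageevsConstruction}, the hyperplanes of $\widetilde X$ are in bijection with $\mathcal W$ (hence with the vertex set of $\Delta$), and two hyperplanes cross exactly when the corresponding walls cross. Combining this with the quarterspace computation above yields an isomorphism between the crossing graph of $\widetilde X$ and $\Delta$. There is no serious obstacle here once the right wallspace is written down; the only mild subtlety is ensuring $\mathcal S$ is chosen so that any two of its elements are separated by only finitely many walls, which is why we restrict to \emph{finite} cliques rather than arbitrary ones.
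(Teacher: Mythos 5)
Your proof is correct and takes a genuinely different route from the paper's. The paper constructs an auxiliary ``inflated'' graph $\Delta^\sharp$ on $\coprod_v I(v)$, where each $I(v)$ has cardinality equal to the valence of $v$ plus two padding vertices $a(v),b(v)$, and defines the wall over $v$ from the closed star of $I(v)$ in $\Delta^\sharp$; the padding vertices are needed to keep the quarterspaces nonempty, the single-vertex case requires a separate clause, and disconnected $\Delta$ is handled component-by-component by gluing the resulting complexes at 0-cubes. Your construction instead takes the underlying set to be the finite cliques of $\Delta$, with the wall over $v$ the partition into cliques containing $v$ versus those not; this is entirely canonical (no choices of inflation or of which inflated vertices to join by edges), handles every case uniformly, and makes the quarterspace check a one-liner. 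As a bonus, unwinding Remark~\ref{rem:SageevsConstruction} for your wallspace shows that a consistent canonical orientation must select $W_v^+$ exactly on a finite set of pairwise-adjacent vertices, so the 0-cubes of $\widetilde X$ are precisely the finite cliques of $\Delta$, with two adjacent iff they differ by one vertex --- a transparent and recognizable cube complex. Both proofs invoke the same black box from Remark~\ref{rem:SageevsConstruction} (walls biject with hyperplanes, with crossings preserved), so they deliver the same conclusion, but yours reaches it with fewer moving parts.
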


\begin{proof}
First suppose that $\Delta$ is connected and does not consist of a single vertex.  We first construct a wallspace from $\Delta.$

For each $v\in\Delta^{(0)}$, let $I(v)$ be a set of vertices with the same cardinality as the set of vertices of $\Delta$ adjacent to $v$, together with two additional vertices $a(v),b(v)$.  There is an \emph{augmented graph} $\Delta^{\sharp}$ formed by inflating each vertex of $\Delta$ into a disjoint set of vertices according to the valence of $v$.  More precisely, $\Delta^{\sharp}$ is the graph whose vertices are $\coprod_{v\in\Delta^{(0)}}I(v)$, and whose edges are as follows.  If $v$ and $w$ are adjacent vertices of $\Delta$, join some vertex of $I(v)$ to some vertex of $I(w)$ by an edge, and do this in such a way that the resulting graph has the property that all vertices in each $I(v)-\{a(v),b(v)\}$ have exactly one incident edge; the remaining vertices have valence 0.  Write $e\sim f$ when $e$ and $f$ are adjacent in $\Delta^{\sharp}$.

The underlying set of the wallspace is $S=\left(\Delta^{\sharp}\right)^{(0)}$.  For each $w\in\Delta^{(0)}$, define a wall $(w^+,w^-)$ by:
\begin{eqnarray*}
  w^+ &=& (I(w)-\{b(w)\})\cup\left\{f\,:\,\exists e\in I(w),\,e\sim f\right\} \\
  w^- &=& S- w^+.
\end{eqnarray*}
By construction, two walls in $\mathcal W$ cross if and only if the corresponding vertices of $\Delta$ are adjacent.  Indeed, let $v$ and $w$ be adjacent vertices of $\Delta$.  Then $w^+\cap v^+$ contains the vertices of $I(v)\cup I(w)$ corresponding to the endpoints of the edge of $\Delta$ joining $v$ and $w$.  The intersection of $v^+$ and $w^-$ consists of the elements of $I(w)$ that do not correspond to the edge joining $v$ to $w$.  The extra element $a(w)$ of $I(w)$ guarantees that there is at least one of these.  On the other hand, if $v$ and $w$ are non-adjacent, then $(I(w)-b(w))\cap\left\{f\,:\,\exists e\in I(v),\,e\sim f\right\}=\emptyset$, so $v^+\cap w^+=\emptyset$, since $I(v)\cap I(w)=\emptyset$ for all $v,w$.  Finally, $w^-$ contains $b(w)$, by definition.  On the other hand, $b(w)\not\in I(v)$, since $v\neq w$, and $b(w)$ is not adjacent to any vertex in $I(v)$, so that $b(w)\in v^-$.  Hence $w^-\cap v^-\neq \emptyset.$

The cube complex $\widetilde X$ dual to $(S,\mathcal W)$ therefore has crossing graph $\Delta$.  Indeed, two hyperplanes of $\widetilde X$ cross if and only if the corresponding walls cross.  If $\Delta$ consists of a single vertex, then define $\widetilde X$ to be a single 1-cube.

If $\Delta_1$ and $\Delta_2$ are distinct components of $\Delta$, then the preceding construction can be performed independently on each component that has more than one vertex, and the resulting cube complexes attached along a single 0-cube, adding osculations, but not crossings, of hyperplanes.  In fact, every CAT(0) cube complex with disconnected crossing graph consists of a collection of cube complexes with connected crossing graphs, glued along various 0-cubes.
\end{proof}

The proof of Proposition~\ref{prop:cubulatinggraphs} shows that $\Delta$ does not uniquely determine $\widetilde X$ if $\Delta$ is disconnected, but this nonuniqueness can happen in other ways.  For example, consider $\Delta=K_{2,3}$, a complete bipartite graph with 5 vertices.  Then $\Delta$ is the crossing graph of $[-1,1]\times[-2,1]$, and is also the crossing graph of $T\times[-1,1]$, where $T$ is a tripod.  However, these two complexes have different contact graphs; one is the join of two line segments and one is the join of a line segment and a triangle.

\begin{prop}[Recubulation]\label{prop:recube}
Let $\widetilde X$ be a CAT(0) cube complex with contact graph $\Gamma$.  There exists a CAT(0) cube complex $\widetilde X_r$ whose crossing graph is equal to $\Gamma$, and there is an isometric embedding $\widetilde X\rightarrow\widetilde X_r$.
\end{prop}

\begin{proof}
The larger cube complex $\widetilde X_r$ is constructed from $\widetilde X$ by \emph{recubulating}.  For each edge $V\coll W$ of $\Gamma$, there is a set $\{(c,c')\}$ of all pairs of 1-cubes such that $c$ is dual to $V$ and $c'$ is dual to $W$, such that $c$ and $c'$ meet in a 0-cube.  Since hyperplanes in a CAT(0) cube-complex do not self-osculate, each pair $(c,c')$ determines a unique 0-cube $c\cap c'$.  For each such pair $(c,c')$ corresponding to an osculation, attach a square $s$ to $\widetilde X$ by gluing two consecutive edges of $s$ along $cc'$ and let $\widetilde X'$ be the (possibly not nonpositively curved) auxiliary cube complex obtained from $\widetilde X$ by attaching all such squares for all osculation-edges $V\coll W$ in $\Gamma$.  Each hyperplane $W$ of $\widetilde X$ extends to a subspace $W'\subset\widetilde X'$ that separates $(\widetilde X')^{(0)}$ into two disjoint subsets.  Indeed, the midcube of $s$ dual to $c$ is added to $V$ and likewise for $c'$ and $W$.

The correspondence $W\mapsto W'$ is bijective, since the 1-cubes of each new square $s$ are in two distinct parallelism classes corresponding to the original pair of osculating hyperplanes, which were distinct.  Hence $\left(\left(\widetilde X'\right)^{(0)},\{W'\}\cong\Gamma^{(0)}\right)$ is a wallspace with the property that two walls $W'$ and $V'$ cross if and only if the corresponding vertices in $\Gamma$ are adjacent.  Cubulating this wallspace gives the desired CAT(0) cube complex $\widetilde X_r$.  See Figure~\ref{fig:ReCube}.  Note that $\widetilde X$ isometrically embeds in $\widetilde X_r$.
\begin{figure}[h]
  \includegraphics[width=3.25 in]{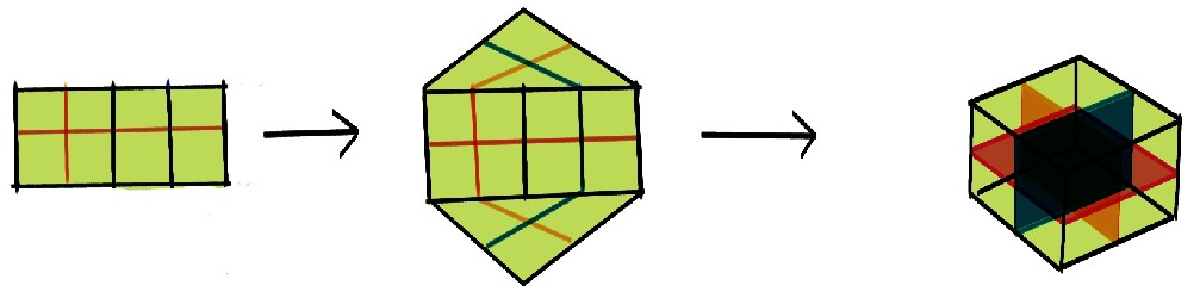}\\
  \caption{Turning osculations into crossings.  The two vertical hyperplanes osculate at left. For each of the two pairs of intersecting 1-cubes corresponding to this osculation, we add a 2-cube, to obtain the middle picture.  To maintain nonpositive curvature, we must now add a 3-cube, and we obtain the picture at right.}\label{fig:ReCube}
\end{figure}
\end{proof}

\begin{rem}
Proposition~\ref{prop:recube} can be proved more topologically by noting that $\widetilde X'$ deformation retracts to $\widetilde X$ and is thus simply connected.  Higher-dimensional cubes can be added to $\widetilde X'$ where necessary to produce the CAT(0) cube complex $\widetilde X_r$ whose hyperplanes correspond to those of $\widetilde X$, since the hyperplanes of a CAT(0) cube complex are determined by the 1-skeleton and each parallelism-class of 1-cubes (explained in e.g.~\cite{ChepoiMedian,HaglundWiseSpecial}) is already represented in $\widetilde X$ and thus in $\widetilde X'$.  Each osculation of hyperplanes in $\widetilde X$ is replaced by a crossing in $\widetilde X_r$ and the dimension of $\widetilde X_r$ is thus equal to the cardinality of the largest clique in $\Gamma$.
\end{rem}

\section{Full spheres in contact graphs}\label{sec:sphere}
Recall that the \emph{full sphere} $\bar S_n(V)\subseteq\Gamma$ is the full subgraph of $\Gamma$ generated by hyperplanes at distance exactly $n$ from $V$ in $\Gamma$.

\begin{defn}[Roots of a full sphere]\label{defn:root}
Let $\bar S_n(V)$ be a full sphere in $\Gamma$, with $n\geq 1$.  A \emph{grade-$n$ root} $C$ of $\bar S_n(V)$ is the full subgraph of $\Gamma$ generated by hyperplanes in $\bar S_n(V)^{(0)}\cap B$, where $B$ is a path-component of $\Gamma-\bar B_{n-1}(V)$.  The grade-0 root is the vertex corresponding to $V$.
\end{defn}

A root $C$ of $\bar S_n(V)$ is a union of path-components of $\bar S_n(V)$.  The 0-skeleta of the roots of $\bar S_n(V)$ may be regarded as equivalence classes, where hyperplanes $V$ and $W$ are equivalent if they are joined by a path in $\Gamma$ that contains no vertex of $\bar B_{n-1}(V)$.  The language of \emph{graded} hyperplanes defined below facilitates discussion of full spheres.

\begin{defn}\label{defn:grades}
Let $\Gamma$ be the contact graph of the CAT(0) cube complex $\widetilde X$.  With respect to a fixed base hyperplane $V^0$, the hyperplane $W$ has \emph{grade} $n$ if $W\in \bar S_n(V^0)$.  If $D\rightarrow\widetilde X$ is a disc diagram containing a dual curve $K$, the \emph{grade} of $K$ is the grade of the hyperplane to which $K$ maps.
\end{defn}

\subsection{Precursors, ancestors and footprints}
\emph{Precursors} are local features of $\Gamma$ that govern how concentric full spheres fit together, and footprints are related subspaces of $\widetilde X$ by which the presence of grade-$n$ hyperplanes are reflected in the grade-$(n-1)$ hyperplanes.  Ancestors are subcomplexes of $\widetilde X$ that contain precursors and footprints.  Precursors have an implicit role in the proof that $\Gamma$ is a quasi-tree.

\begin{defn}[Planar grid]
Let $\reals$ denote the real line, regarded as a cube complex with $\reals^{(0)}=\integers$.  An \emph{interval} $I$ is a nonempty connected subcomplex of $\reals$.  A \emph{planar grid} $S$ is a 2-dimensional CAT(0) cube complex isomorphic to $I\times I'$, where $I,I'$ are (possibly infinite) subdivided intervals.  Note that a planar grid is a convex subcomplex of $\reals\times\reals$.  Planar grids feature in a minor manner in Lemma~\ref{lem:precursors} and play an important role in Section~\ref{sec:planarbipartiteII}.
\end{defn}

\begin{defn}\label{defn:Precursors}
Fix a base hyperplane $V$ of $\widetilde X$ and grade the hyperplanes of $\widetilde X$ with respect to $V$. Let $U\in \bar S_n(V)$, with $n\geq 1$.  A \emph{precursor of $U$} is a hyperplane $W\in \bar S_{n-1}(V)$ such that $U\coll W$.  For $n\geq 1$, a \emph{common precursor} for an edge $U_1\coll U_2$ in $\bar S_n(V)$ is a vertex $W\in \bar S_{n-1}(V)$ such that any length-$n$ path from $V$ to $U_i$ passes through $W$, for $i=1,2$. For example, all edges of $S_1(V)$ have $V$ as a common precursor of their endpoints.

For $n\geq 2$, an \emph{edge-precursor} for an edge $U_1\coll U_2$ in $\bar S_n(V)$ is an edge $W_1\coll W_2$ in $\bar S_{n-1}(V)$ such that $U_i\coll W_i$ for $i=1,2$.
See Figure~\ref{fig:PrecDef}.
\begin{figure}[h]
  \includegraphics[width=1.75 in]{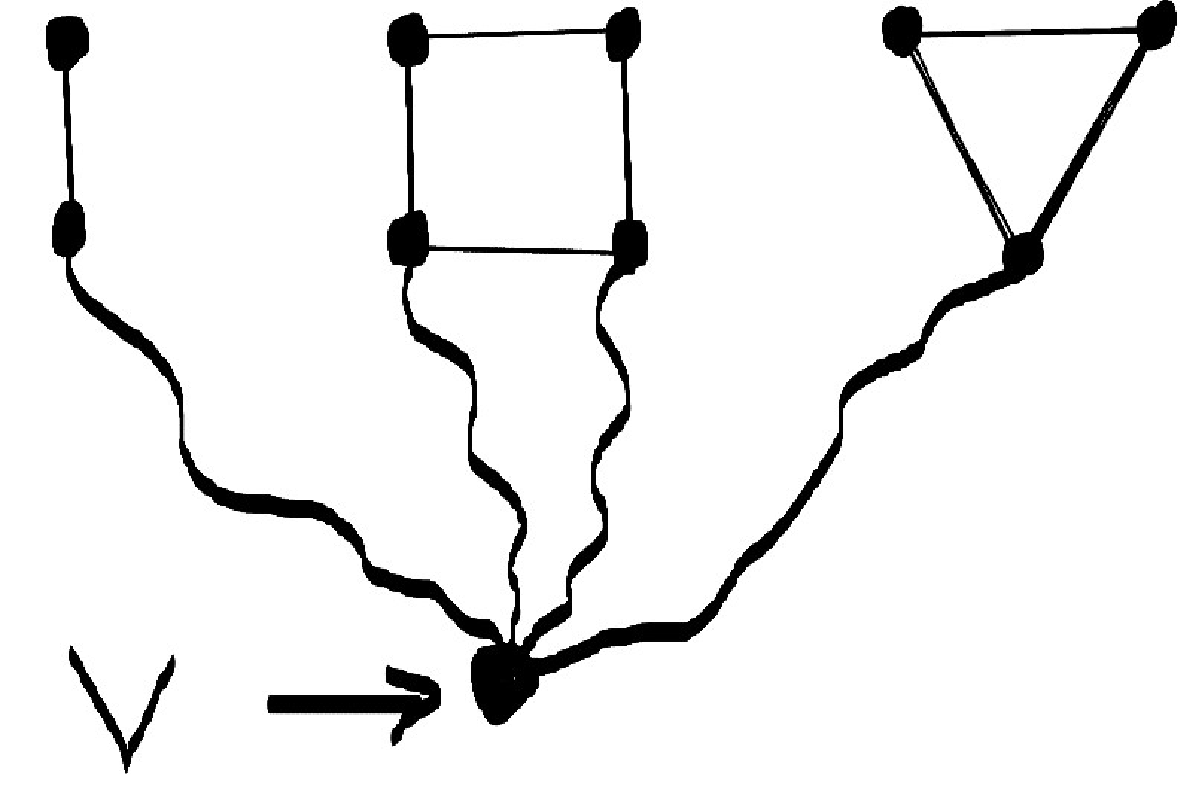}\\
  \caption{Left to right, in $\Gamma$: a precursor, an edge-precursor and a common precursor.  The wavy paths are geodesics of length $n-1$.}\label{fig:PrecDef}
\end{figure}
\end{defn}

The following lemma shows that edge-precursors and common precursors exist in $\Gamma$.  The edge in $S_{n-1}(V)$ defining an edge-precursor may arise as an osculation -- the analogous statement for crossing graphs is false.

\begin{lem}\label{lem:precursors}
Let $\widetilde X$ be a CAT(0) cube complex with contact graph $\Gamma$.  For $n\geq 2$, if $U^n_1$ and $U^n_2$ in $\bar S_n(V)\subset\Gamma$ are adjacent, then either they have a common precursor or the edge $U^n_1\coll U^n_2$ has an edge-precursor.
\end{lem}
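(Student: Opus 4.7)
The plan is to encode the configuration as a closed cycle in $\Gamma$ passing through $V$, $U^n_1$, and $U^n_2$, realize this cycle as a combinatorial loop in $\widetilde X$, and extract the precursors from a minimal-complexity disc diagram with fixed carriers.

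First, I would choose geodesics $V = W_0 \coll W_1 \coll \cdots \coll W_{n-1} \coll U^n_1$ and $V = W'_0 \coll W'_1 \coll \cdots \coll W'_{n-1} \coll U^n_2$ in $\Gamma$. Combined with the edge $U^n_1 \coll U^n_2$, they produce a closed cycle $\sigma$ of length $2n+1$ in $\Gamma$; label its hyperplanes $H_0, H_1, \ldots, H_{2n}$ with $H_0 = V$, $H_n = U^n_1$, $H_{n+1} = U^n_2$. Since $H_i \coll H_{i+1}$, each intersection $N(H_i) \cap N(H_{i+1})$ contains a 0-cube. Joining such 0-cubes by geodesics $P_i \to N(H_i)$ produces a closed combinatorial path $P = P_0 P_1 \cdots P_{2n}$ in $\widetilde X$. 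I then take a disc diagram $D \to \widetilde X$ with $\partial_p D = P$ and fixed carriers for $\sigma$, of minimal complexity; Lemma~\ref{lem:fixedcorners} then constrains its dual curves.

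Next I would focus on the 0-cube $v$ of $D$ at which $P_n$ meets $P_{n+1}$; its image lies in $N(U^n_1) \cap N(U^n_2)$. I first dispose of easy cases: if the selected penultimate hyperplanes coincide, then $W_{n-1} = W'_{n-1}$ is a common precursor; if they contact each other in $\Gamma$ and are distinct, they are an edge-precursor; and if either $P_n$ or $P_{n+1}$ has length zero then $v$ lies in a triple intersection such as $N(W_{n-1}) \cap N(U^n_1) \cap N(U^n_2)$, so Helly's theorem (Lemma~\ref{lem:HellysTheorem}) forces $W_{n-1} \coll U^n_2$ and again yields a common precursor or edge-precursor. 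In the remaining case I would trace the dual curves of $D$ through the last 1-cube of $P_n$ and the first 1-cube of $P_{n+1}$: by Lemma~\ref{lem:fixedcorners}(1) each such dual curve terminates on some $P_j$ with $|j-n| \geq 2$, so its associated hyperplane $K^*$ contacts $U^n_1$ (respectively $U^n_2$) and contacts $H_j$. Using the dual curves nearest to $v$ and applying Helly's theorem to the subdiagram they bound, I expect to produce either one hyperplane $W$ contacting both $U^n_1$ and $U^n_2$ (common precursor) or a pair $W \coll W'$ with $W \coll U^n_1$ and $W' \coll U^n_2$ (edge-precursor).

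The main obstacle is controlling the grades of the produced hyperplanes: a dual curve emanating from $P_n$ a priori yields a hyperplane of grade in $\{n-1, n, n+1\}$, whereas we need grade exactly $n-1$. I expect to address this by minimizing complexity jointly over the choice of geodesics $W_i, W'_i$ in $\Gamma$ and over disc diagrams with fixed carriers for the resulting cycle: if a dual curve furnished a hyperplane of grade below $n-1$, it could be spliced into one of the geodesics to produce a shorter path in $\Gamma$ from $V$ to one of the $U^n_i$, contradicting that each $U^n_i$ has grade $n$. This grade-control argument, together with a careful case analysis of dual curve configurations near $v$, is where the proof is most delicate.
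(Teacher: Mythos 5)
Your overall strategy is the same as the paper's: realize the two $\Gamma$-geodesics together with the edge $U^n_1 \coll U^n_2$ as a closed cycle, build a fixed-carrier disc diagram $D$ of minimal complexity, and read off precursors from the dual curves constrained by Lemma~\ref{lem:fixedcorners}. However, there are two genuine gaps in the plan that would need to be fixed before it becomes a proof.

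The grade-control reasoning is aimed at the wrong obstruction. You worry that a dual curve emanating from $P_n$ might give a hyperplane of grade below $n-1$ and propose to splice it into the geodesics. But a hyperplane contacting $U^n_1$ automatically has grade in $\{n-1,n,n+1\}$; a grade below $n-1$ is impossible by the triangle inequality in $\Gamma$, and splicing does not enter. The real issue is ruling out grades $n$ and $n+1$, and this is not a splicing argument but a \emph{destination} argument: by minimality of $\comp(D)$ (Lemma~\ref{lem:fixedcorners} plus the hyperplane-replacement moves on $\sigma_i$), a dual curve $C$ emanating from $P^n_i$ cannot end on $P^{n\pm 1}_i$ or on $P^{n-2}_i$, and it cannot reach $P^j_k$ with $j < n-2$ because the hyperplane it carries would then link $U^n_i$ to a vertex of grade $j$ through a $\Gamma$-path of length $2$, contradicting $U^n_i \in \bar S_n(V)$. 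The only surviving destinations are $P^{n-1}_k$ and $P^{n-2}_k$ with $k \neq i$ — that is, the curves are forced across to the \emph{other} arm. This crossing-to-the-other-side fact, which your sketch never extracts, is precisely what makes the grade of the dual hyperplane come out to $n-1$ (it crosses something of grade $\leq n-1$ and contacts something of grade $n$) and is what makes the pairing-up into an edge-precursor possible.

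Relatedly, the plan to invoke Helly's theorem near the corner $v$ does not substitute for the remaining case analysis. The paper's argument after forcing dual curves across is not a Helly argument: it either pairs a curve $C$ from $P^n_i$ to $P^{n-2}_k$ with a curve $C'$ from $P^n_k$ to $P^{n-2}_i$ (their images cross, giving an edge-precursor), or it contracts $P^{n-2}_k$ to a point by a lower-area pentagonal diagram, or it lands in a planar-grid configuration forcing $U^{n-1}_1 \coll U^{n-1}_2$ directly. None of these is a triple-intersection argument; they are careful bookkeeping of dual-curve endpoints. Your length-zero subcase via Helly is fine as far as it goes, but the generic case needs the destination analysis and the two non-Helly fallbacks (the pentagonal diagram and the grid case), neither of which appears in your sketch. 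So while the framing matches the paper, the heart of the proof — where the precursors actually come from — is still missing.
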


\begin{proof}
Either the $U^n_i$ have a common precursor or there exist geodesic paths $\sigma_i$ in $\Gamma$, for $i=1,2$, which are concatenations $V=U^0_i\coll U^1_i\coll\ldots\coll U^{n-1}_i\coll U^n_i$ such that $U^j_i\in S_j(V)$ and $U^{n-1}_1\neq U^{n-1}_2$.  In the latter case, choose a closed path $\gamma\rightarrow\widetilde X$ that is a concatenation
\begin{equation*}
\gamma=P^0P^1_1P^2_1\ldots P^{n-1}_1P^n_1P^n_2P^{n-1}_2\ldots P^1_2,
\end{equation*}
where $P^j_i\rightarrow N(U^j_i)$ and $P^0\rightarrow N(V)$.  Let $D\rightarrow\widetilde X$ be a disc diagram with boundary path $\gamma$, and suppose that $D$ has minimal complexity among all such diagrams for all such choices of geodesic segments in $\Gamma$.  This situation is illustrated in Figure~\ref{fig:Precursors}.
\begin{figure}[h]
  \includegraphics[width=3 in]{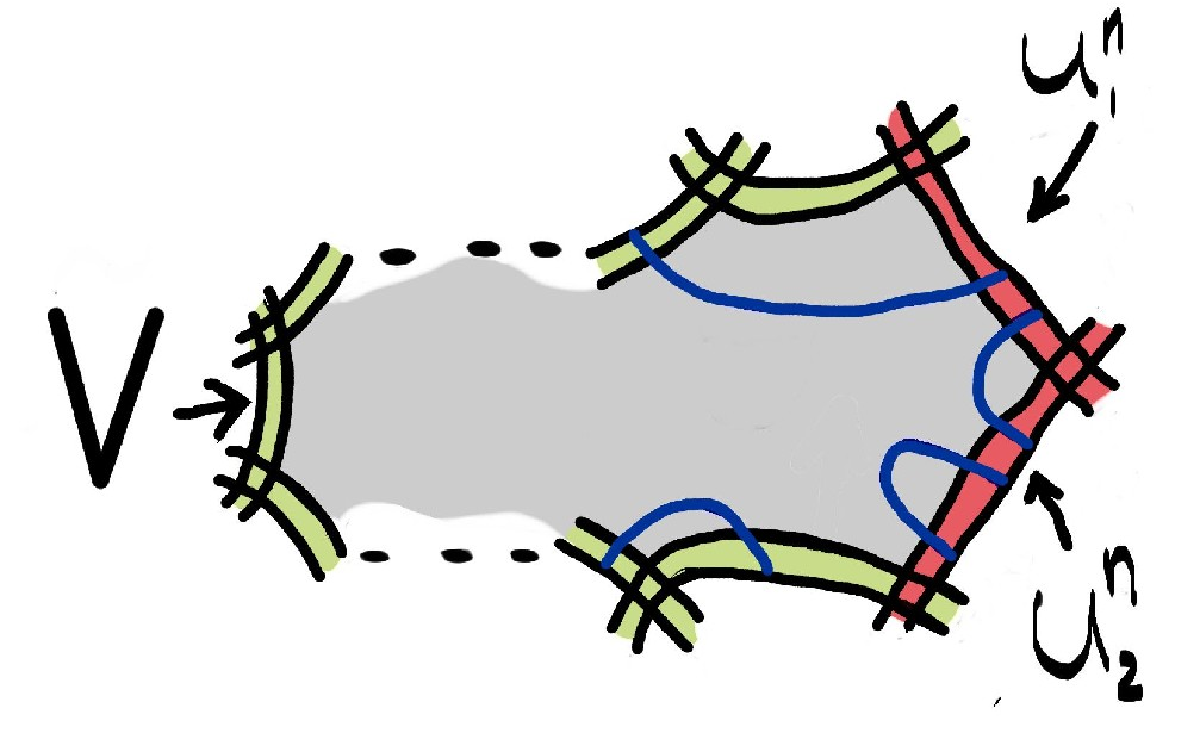}\\
  \caption{The disc diagram in Lemma~\ref{lem:precursors}.}\label{fig:Precursors}
\end{figure}
No dual curve in $D$ has both ends on a subpath of $\gamma$ that maps to a single hyperplane carrier, by minimality of area.

Let $C$ be a dual curve originating on $P^n_i$.  Since $U^n_i$ has grade $n$, the hyperplane $U$ to which $C$ maps cannot cross $U^j_k$ for $j<n-2$, so that $C$ must end on $P^j_k$ with $k=1,2$ and $j\geq n-2$.  If $C$ ends on $P^{n-1}_i$, then there is a lower-complexity choice of $D$ by Lemma~\ref{lem:fixedcorners}.  If $C$ ends on $P^{n-2}_i$, then the path $\sigma_i$ can be modified by replacing $U^{n-1}_i$ by $U$, leading to a lower-area disk diagram.  Hence $C$ ends on $P^j_k$ with $j=n-1$ or $n-2$ and $k\neq i$.

If $C$ ends on $P^{n-2}_k$, as on the left of Figure~\ref{fig:Precursors2}, then there are two possibilities.  If some dual curve $C'$ originating on $P^n_k$ ends on $P^{n-2}_i$, then the hyperplanes corresponding to $C$ and $C'$ are an edge-precursor for $U^n_1$ and $U^n_2$.
\begin{figure}[h]
  \includegraphics[width=4 in]{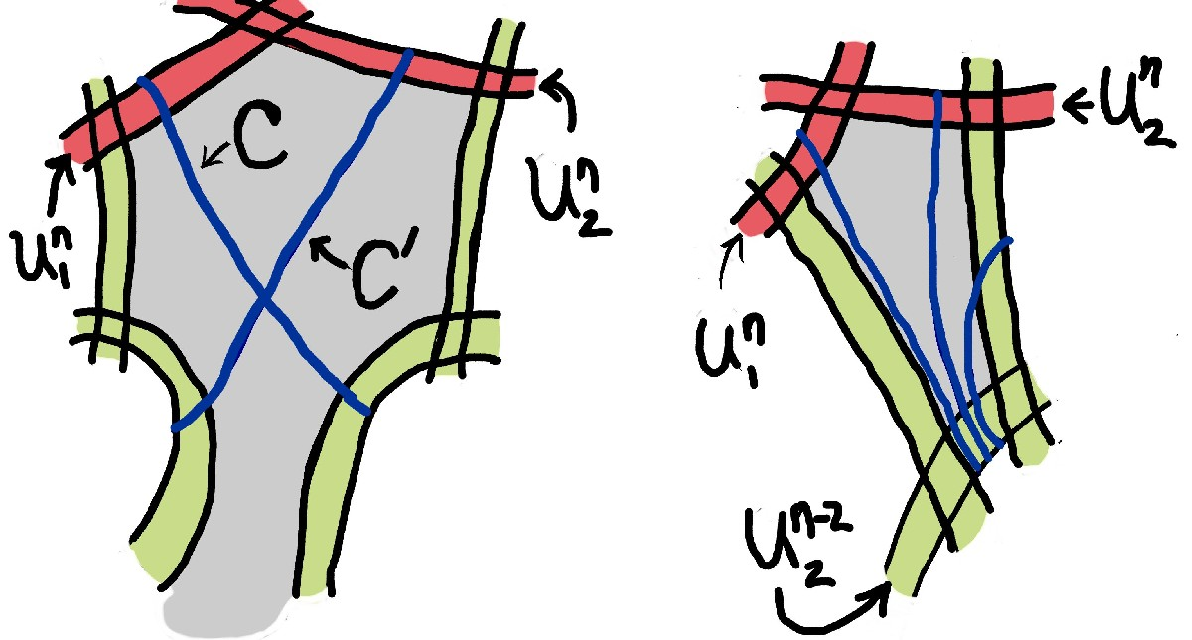}\\
  \caption{Obtaining a precursor-pair.  The dual curves at right are $C_1,C_2,C_3$.}\label{fig:Precursors2}
\end{figure}
If not, then observe that $\sigma_i$ can be replaced by the path $U^0_k\coll U^1_k\coll\ldots\coll U\coll U^n_i$, yielding a lower-area \emph{pentagonal diagram} $D'$ as on the right of Figure~\ref{fig:Precursors2}.  Any dual curve to the subpath of $P^{n-2}_k$ contained in $\partial_pD'$ leads to a contradiction: if such a dual curve $C_1$ ends on $P^n_i$, then area can be further decreased by using $C_1$ in place of $C$; if $C_2$ travels from $P^{n-2}_k$ to $P^n_k$ then replace $U^{n-1}_k$ by the hyperplane corresponding to $C_2$; if $C_3$ has any of the other two possible destinations, Lemma~\ref{lem:fixedcorners} gives a contradiction.  These possibilities are shown at right in Figure~\ref{fig:Precursors2}.  Hence the subtended part of $P^{n-2}_k$ is a trivial path, and $N(U)\cap N(U^{n-1}_k)\neq\emptyset$, so that those hyperplanes form an edge-precursor.
\begin{figure}[h]
    \includegraphics[width=3 in]{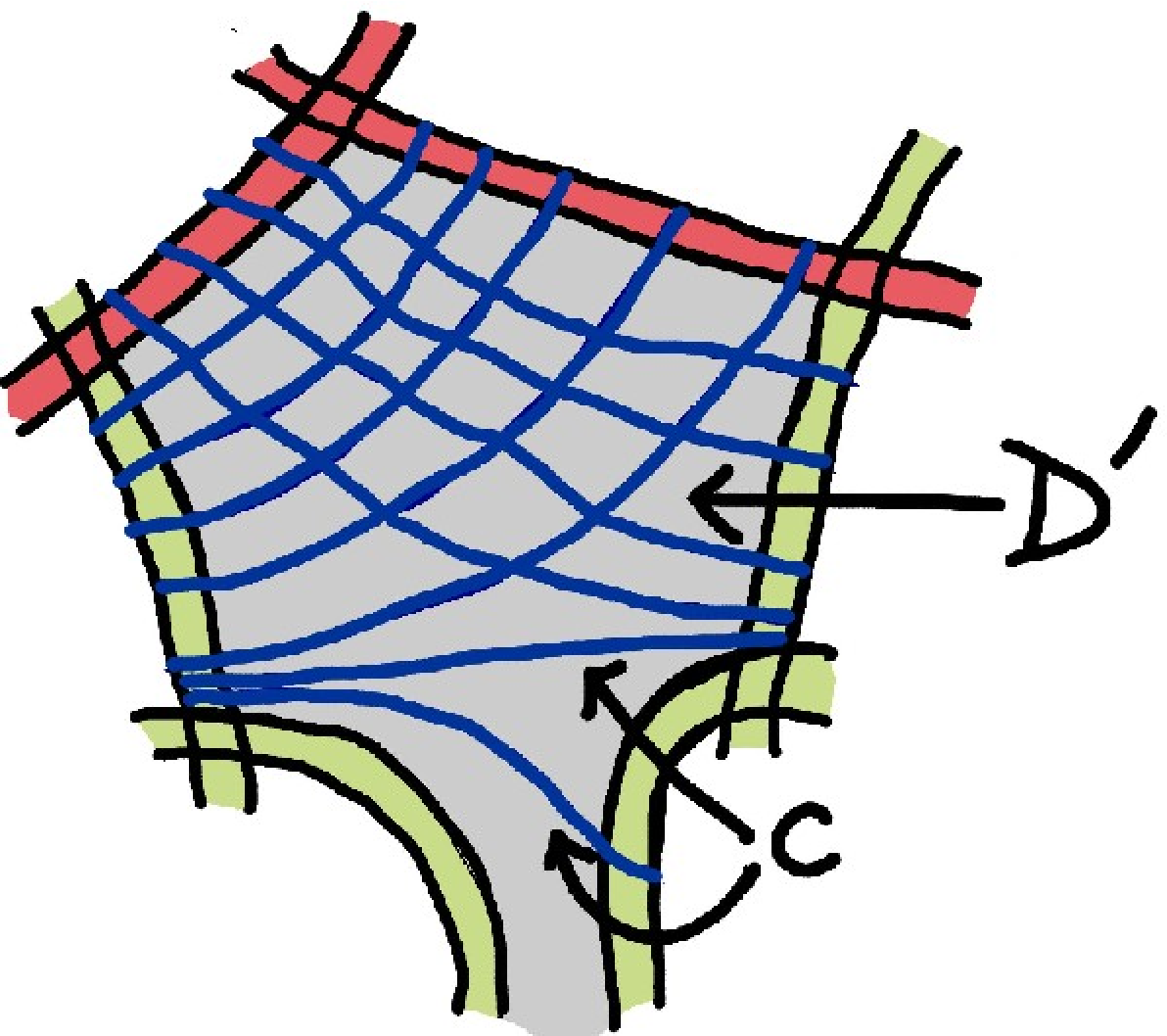}\\
    \caption{The grid case.}\label{fig:GridCase}
\end{figure}
The remaining possibility is that all dual curves emanating from $P^n_i$ end on $P^{n-1}_k$ and vice versa.  No two dual curves from $P^n_i$ or $P^{n-1}_i$ cross and thus there is thus a planar grid in $D$, as in Figure~\ref{fig:GridCase}.  An innermost dual curve $C$ to $P^{n-1}_i$ that does not end on $P^n_k$ forms part of the boundary path of a subdiagram $D'\subset D$, containing the planar grid, such that any dual curves in $D'$ emanating from $C$ have no possible destination.  Thus $U^{n-1}_1\coll U^{n-1}_2$.
\end{proof}

\begin{rem}
The analogue of Lemma~\ref{lem:precursors} does not hold for crossing graphs.  Consider a 10-gon tiled by squares consisting of 5 squares meeting around a 0-cube. Any choice of base hyperplane gives an adjacent pair of grade-2 (in the crossing graph) hyperplanes that do not have a common (crossing) precursor of grade 1 or an edge-precursor, since the grade-1 hyperplanes do not cross.
\end{rem}

Given a central hyperplane $V$ and a radius $n\geq 0$, there is a subcomplex $Y_n=\bigcup_{W^n} N(W^n)$ corresponding to $\bar S_n(V)$.  For $n\geq 1$, the subcomplex $Y_n\subset\widetilde X$ is not in general convex, but nonetheless exhibits some of the behavior of a convex subcomplex.

\begin{defn}[Ancestor]\label{defn:ancestor}
Given $U\in\bar S_n(V)^{(0)}$, the \emph{ancestor} $\ancestor(U)$ of $U$ is the subcomplex of $Y_{n-1}$ consisting of the union of all carriers $N(W)$ such that $W\in\bar S_{n-1}(V)^{(0)}$ and $U\coll V$.
\end{defn}

\begin{defn}[Footprint]\label{defn:footprint}
For $n\geq 1$, if $U\in \bar S_n(V)^{(0)}\subset\Gamma$, then the \emph{footprint $F(U)$ of $U$ in $\bar S_{n-1}(V)$} is the subspace
\begin{equation*}
    F(U)=\bigcup_{W\in S_{n-1}(V)^{(0)}}N(U)\cap N(W)
\end{equation*}
of $\ancestor(U)$.  Each intersection $N(U)\cap N(W)=F(U;W)$ is the \emph{footprint of $U$ in $W$}.
\end{defn}

The following lemmas enable statements about hyperplanes to be proven by induction on dimension, since they show that hyperplanes inherit the adjacency properties of their footprints.

\begin{lem}\label{lem:Connectedancestor}
For $U\in \bar S_n(V)^{(0)}$, the ancestor $\ancestor(U)$ and the footprint $F(U)$ are connected.
\end{lem}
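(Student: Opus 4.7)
The plan is to prove both statements simultaneously by producing, for any two precursors $W_1, W_2$ of $U$, a chain of precursors $W_1 = Y_0 \coll Y_1 \coll \cdots \coll Y_\ell = W_2$ of $U$ whose consecutive footprints overlap, that is, $F(U; Y_i) \cap F(U; Y_{i+1}) \neq \emptyset$ for every $i$. Such a chain will yield both assertions: the carriers $N(Y_i) \subseteq \ancestor(U)$ chain together through non-empty pairwise intersections, so $\ancestor(U)$ is connected, and the $F(U; Y_i)$ are non-empty convex subcomplexes of $N(U)$ chaining together inside $F(U)$, so $F(U)$ is connected. The case $n = 1$ is immediate, since $V$ is the unique precursor and both $\ancestor(U) = N(V)$ and $F(U) = N(V) \cap N(U)$ are convex.

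For $n \geq 2$, I would choose, for $i = 1, 2$, a geodesic $V = U^0 \coll U^1_i \coll \cdots \coll U^{n-1}_i = W_i$ in $\Gamma$ and a $0$-cube $x_i \in N(W_i) \cap N(U)$, and assemble combinatorial geodesics $P^V \to N(V)$, $P^j_i \to N(U^j_i)$, and $Q \to N(U)$ (with $Q$ running from $x_1$ to $x_2$) into a closed path
\[
\gamma = P^V \cdot P^1_1 \cdots P^{n-1}_1 \cdot Q \cdot P^{n-1}_2 \cdots P^1_2.
\]
Let $D \to \widetilde X$ be a disc diagram with fixed carriers for $\gamma$, of minimal complexity over all valid choices of geodesics and of $D$. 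A dual curve $K$ with one end on $Q$ is dual to a hyperplane $H$ contacting $U$, so $\mathrm{grade}(H) \in \{n-1, n, n+1\}$. By Lemma~\ref{lem:fixedcorners}, $K$ cannot terminate on the adjacent subpaths $P^{n-1}_1$ or $P^{n-1}_2$; and if $K$ terminated on $P^j_i$ with $j \leq n - 3$, then $H \coll U^j_i$ would give $\mathrm{grade}(H) \leq j + 1 < n - 1$, contradicting $\mathrm{grade}(H) \geq n - 1$. Hence $K$ terminates on $P^{n-2}_i$ (or on $P^V$ when $n = 2$), and in each case the two sets of grade constraints intersect in $\{n-1\}$, so $H$ is a precursor of $U$.

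Once this grade analysis is established, the chain is immediate. If $|Q| = 0$ then $x_1 = x_2 \in F(U; W_1) \cap F(U; W_2)$ and $W_1 \coll W_2$ directly. Otherwise, listing the $1$-cubes of $Q$ in order as $c_1, \ldots, c_\ell$ and letting $Y_i$ be the hyperplane dual to $c_i$, the grade analysis applied to the dual curve through $c_i$ shows each $Y_i$ is a precursor of $U$; adjacent $1$-cubes $c_i, c_{i+1}$ share a $0$-cube $v_i \in N(Y_i) \cap N(Y_{i+1}) \cap N(U) = F(U; Y_i) \cap F(U; Y_{i+1})$, while the endpoints give $x_1 \in F(U; W_1) \cap F(U; Y_1)$ and $x_2 \in F(U; Y_\ell) \cap F(U; W_2)$, completing the chain. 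The main obstacle is the grade analysis itself: as in the proof of Lemma~\ref{lem:precursors}, one must combine Lemma~\ref{lem:fixedcorners} with minimization over all choices of $\Gamma$-geodesics $V \coll \cdots \coll W_i$, so that a dual curve from $Q$ to $P^{j}_i$ having the ``wrong'' grade would allow a rerouting that strictly reduces the complexity of $D$, thereby forcing every dual curve emanating from $Q$ into the precursor grade $n - 1$.
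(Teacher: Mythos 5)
Your proof is correct, and it takes a genuinely different route from the paper's. The paper sets up the same fixed-carrier diagram but \emph{minimizes over the choice of precursors} $W_1,W_2$ (in addition to the $\Gamma$-geodesics and the diagram), and then shows that every possible destination for a dual curve $K$ emanating from $Q$ leads to a contradiction --- including the case ``$K$ ends on $P_i^{n-2}$,'' which the paper kills by replacing $U_i^{n-1}=W_i$ with the hyperplane carried by $K$. The upshot in the paper is $|Q|=0$, so the two (minimizing) precursor carriers intersect directly in $N(U)$. You, by contrast, \emph{fix} $W_1,W_2$, do not vary them, and allow $|Q|>0$: the fixedcorners lemma rules out the adjacent subpaths $P_i^{n-1}$, a pure grade constraint rules out $P_i^j$ for $j\leq n-3$ (and $P^V$ for $n\geq 3$), so each dual curve emanating from $Q$ lands on $P_i^{n-2}$ (or $P^V$ when $n=2$); hence each hyperplane $Y_i$ dual to a $1$-cube of $Q$ has grade exactly $n-1$, crosses $U$, and is thus a precursor. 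The shared $0$-cubes of consecutive $1$-cubes of $Q$, together with the endpoints $x_1,x_2$, then give a chain of precursor footprints with pairwise nonempty overlaps connecting $F(U;W_1)$ to $F(U;W_2)$, proving both connectedness claims at once. This is arguably cleaner: it avoids the subtlety inherent in minimizing over the precursor pair while trying to conclude a statement about \emph{all} pairs, and it produces an explicit connecting chain. One small remark: your closing paragraph suggests that ruling out ``wrong-grade'' destinations requires rerouting via minimization over $\Gamma$-geodesics, but your own grade analysis in the middle paragraph is already self-sufficient --- Lemma~\ref{lem:fixedcorners} and the bare grade inequalities do all the work, with no hyperplane replacement needed. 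You should also note explicitly that if a $1$-cube of $Q$ were dual to $U$ itself, the corresponding dual curve would have no admissible destination (grade $n$ forbids terminating on $P_i^{n-2}$, and fixedcorners forbids $P_i^{n-1}$), so each $Y_i$ genuinely crosses $U$ rather than equalling it.
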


\begin{proof}
If $n=1$, then the ancestor is the connected subcomplex $N(V)$ and the footprint $N(U)\cap N(V)$ is connected by convexity of hyperplane carriers.

Let $U^{n-1}_1$ and $U^{n-1}_2$ be distinct precursors of $U$.  For $i=1,2$, choose geodesics
\begin{equation*}
V=U^0_i\coll\ldots\coll U^{n-1}_i\coll U
\end{equation*}
in $\Gamma$.  As in Lemma~\ref{lem:precursors}, choose a closed path
\begin{equation*}
\gamma=P_0P^1_1P^2_1\ldots P^{n-1}_1QP^{n-1}_2\ldots P^1_2,
\end{equation*}
with $P_0\rightarrow N(V)$,  $P^j_i\rightarrow N(U^j_i)$ and $Q\rightarrow N(U)$.  Let $D\rightarrow\widetilde X$ be a disc diagram with boundary path $\gamma$, and suppose that the choice of precursors, of geodesics in $\Gamma$, of $\gamma$, and of $D$ are made so that $D$ has minimal complexity with respect to all these possibilities.  See Figure~\ref{fig:ConnAnc}.
\begin{figure}[h]
  \includegraphics[width=5 in]{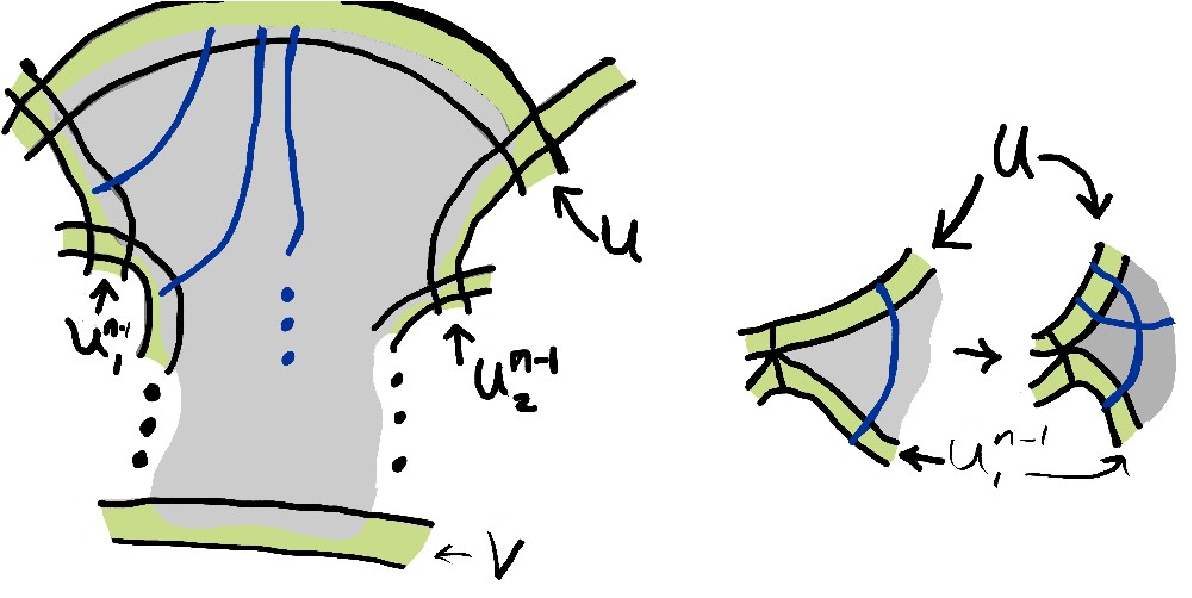}\\
  \caption{Ancestors are connected.}\label{fig:ConnAnc}
\end{figure}
Consider a dual curve $C$ in $D$ with an end on $Q$.  Every possibility for the other end of $C$ leads to a contradiction: two ends on $Q$ gives a bigon; an end on $P^{n-1}_i$ leads to a contradiction of Lemma~\ref{lem:fixedcorners} (the osculating case is shown at the right of Figure~\ref{fig:ConnAnc}); an end on $P^{n-2}_i$ leads to a modification of geodesic in $\Gamma$ resulting in an area reduction; an end on $P^{n-k}_i$ with $k>2$ contradicts the hypothesis that $U\in \bar S_n(V)$.  An end on $P_0$ leads to a closer pair of precursors and a choice of geodesic in $\Gamma$ that lowers area for $n=2$, and contradicts the fact that the chosen path in $\Gamma$ is geodesic if $n>2$. Hence $Q$ is a length-0 path, so that $N(U^{n-1}_1)\cap N(U^{n-1}_2)\neq\emptyset$.  The preceding argument also proves connectedness of $F(U)$.
\end{proof}

\begin{lem}\label{lem:FootprintAdjacency}
If $U_1,U_2\in \bar S_n(V)^{(0)}$ and $W\in S_{n-1}(V)$ is a common precursor, then $U_1\coll U_2$ if and only if $F(U_1;W)\cap F(U_2;W)\neq\emptyset$.
\end{lem}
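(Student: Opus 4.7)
The plan is to reduce the lemma to Helly's theorem for CAT(0) cube complexes (Lemma~\ref{lem:HellysTheorem}) applied to the three hyperplane carriers $N(U_1),\,N(U_2),\,N(W)$, each of which is convex in $\widetilde X$ (as noted in the paragraph following Remark~\ref{rem:contactingandconvexity}).

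The easy direction is the ``if'' direction: if $F(U_1;W)\cap F(U_2;W)\neq\emptyset$, then by definition there is a point of $\widetilde X$ lying in $N(U_1)\cap N(W)\cap N(U_2)\cap N(W)=N(U_1)\cap N(U_2)\cap N(W)$. In particular $N(U_1)\cap N(U_2)\neq\emptyset$, so $U_1\coll U_2$ by Definition~\ref{defn:hyperplaneadjacency}.

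For the ``only if'' direction, first observe that since $W\in\bar S_{n-1}(V)$ is a common precursor of $U_1$ and $U_2$ and every geodesic in $\Gamma$ from $V$ to $U_i$ has length $n$ and passes through $W$, the hyperplane $W$ must be the penultimate vertex of such a geodesic; in particular, $W\coll U_i$ for $i=1,2$. Thus $N(W)\cap N(U_i)=F(U_i;W)\neq\emptyset$ for $i=1,2$. Combining this with the hypothesis $U_1\coll U_2$, which gives $N(U_1)\cap N(U_2)\neq\emptyset$, we see that the three convex subcomplexes $N(U_1),\,N(U_2),\,N(W)$ pairwise intersect.

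Applying Helly's theorem (Lemma~\ref{lem:HellysTheorem}) to this collection yields a 0-cube $x\in N(U_1)\cap N(U_2)\cap N(W)$. Since $x\in N(U_1)\cap N(W)=F(U_1;W)$ and $x\in N(U_2)\cap N(W)=F(U_2;W)$, we conclude $F(U_1;W)\cap F(U_2;W)\neq\emptyset$, as required. I expect no real obstacle here beyond invoking Helly's theorem; the main conceptual content of the lemma is that the common precursor allows one to pass from pairwise to triple carrier intersections.
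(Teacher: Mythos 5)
Your proof is correct and is exactly the route the paper takes: the paper's proof is the one-line remark that the lemma follows immediately from Helly's theorem (Lemma~\ref{lem:HellysTheorem}) applied to the convex carriers $N(U_1)$, $N(U_2)$, $N(W)$. You have simply spelled out the pairwise-intersection verification that the paper leaves implicit.
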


\begin{proof}
This follows immediately from Lemma~\ref{lem:HellysTheorem}.
\end{proof}

\section{Contact graphs are quasi-trees}\label{sec:quasitree}
Fix a base hyperplane $V^0$ of $\widetilde X$.  For each $n\geq 0$, let $\mathcal C^n$ denote the set of grade-$n$ roots of the full sphere $\bar S_n(V^0)$.  Recall that a root $C\in\mathcal C^n$ is the full subgraph of $\Gamma$ generated by the vertices $V^n\in \bar S_n(V^0)$ with the property that any two $V_1^n,V_2^n\in C$ are joined by a path in $\Gamma-\bar B_{n-1}(V^0)$.  In particular, the graph $C$ may not be connected.

The main theorem in this section is Theorem~\ref{thm:HGisQT}, and we give two quite different proofs.

\begin{thm}\label{thm:HGisQT}
Let $\widetilde X$ be a CAT(0) cube complex with contact graph $\Gamma$.  Then $\Gamma$ is quasi-isometric to a tree.
\end{thm}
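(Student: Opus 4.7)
The plan is to give the constructive proof: build a graded root tree $\mathcal{T}$ from the grade-$n$ roots $\mathcal{C}^n$ of Definition~\ref{defn:root} and exhibit a quasi-isometry $\phi:\Gamma\to\mathcal{T}$. Declare the vertex set of $\mathcal{T}$ to be $\bigsqcup_{n\geq 0}\mathcal{C}^n$, with $\mathcal{C}^0=\{V^0\}$, and join $C\in\mathcal{C}^n$ to $C'\in\mathcal{C}^{n-1}$ when some (equivalently, every) hyperplane of $C$ has its precursors lying in $C'$. This parent relation is well-defined: if $U\in C$ has precursors $W_1,W_2\in\bar{S}_{n-1}(V^0)$, then the length-$2$ path $W_1\coll U\coll W_2$ lies in $\Gamma-\bar{B}_{n-2}(V^0)$ because $U$ has grade $n>n-2$, putting $W_1$ and $W_2$ in the same grade-$(n-1)$ root; and if $U_1,U_2\in C$ are joined by a path inside $\Gamma-\bar{B}_{n-1}(V^0)$, prepending and appending precursors $W_i$ of $U_i$ yields a path in $\Gamma-\bar{B}_{n-2}(V^0)$, so the $W_i$ share a grade-$(n-1)$ root. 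Thus each $C\in\mathcal{C}^n$ has a unique parent in $\mathcal{C}^{n-1}$, and $\mathcal{T}$ is a tree rooted at $\{V^0\}$.

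Define $\phi$ by sending each hyperplane to its root. Then $\phi$ is surjective and $d_{\mathcal{T}}(\{V^0\},\phi(V))$ equals the grade of $V$. If $V_1\coll V_2$ is an edge of $\Gamma$, the grades of the endpoints differ by at most $1$: if they coincide, the contact edge itself avoids $\bar{B}_{n-1}(V^0)$ and certifies $\phi(V_1)=\phi(V_2)$; if they differ by one, the lower is a precursor of the higher, so their roots are parent and child. Hence $\phi$ is $1$-Lipschitz. To upgrade $\phi$ to a quasi-isometry it then suffices to produce a universal $D$ with $\mathrm{diam}_\Gamma(C)\leq D$ for every root $C$, since the tree-geodesic from $\phi(V)$ to $\phi(W)$ can be lifted edge-by-edge to a $\Gamma$-path by jumping at cost $\leq D+1$ at each parent-child transition.

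The diameter bound is the crux. Given $W_1,W_2\in C\in\mathcal{C}^n$, take a minimal path $W_1=X_0\coll X_1\coll\cdots\coll X_m=W_2$ inside $\Gamma-\bar{B}_{n-1}(V^0)$ and assemble a closed combinatorial polygon $\gamma$ in $\widetilde{X}$ by stitching together geodesics $P_i\to N(X_i)$ with short ``precursor bridges'' through common or edge-precursors furnished by Lemma~\ref{lem:precursors}. Choose a disc diagram $E\to\widetilde{X}$ with $\partial_p E=\gamma$ of minimal complexity among diagrams with these fixed carriers, and apply Lemma~\ref{lem:fixedcorners} together with the ancestor--footprint machinery of Section~\ref{sec:sphere}---notably Lemmas~\ref{lem:Connectedancestor}, \ref{lem:FootprintAdjacency} and the Helly property of Lemma~\ref{lem:HellysTheorem}---to the dual curves in $E$. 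Each dual curve either connects adjacent $P_i$'s (contradicting minimality via Lemma~\ref{lem:fixedcorners}) or travels between nonadjacent ones, in which case it exhibits a new common contact that allows the underlying sequence $X_0,\dots,X_m$ to be shortened while strictly dropping complexity. Iterating forces $m$ to be at most a universal constant, hence $d_\Gamma(W_1,W_2)\leq D$, and combined with the $1$-Lipschitz estimate this yields the quasi-isometric inequality $d_\Gamma(V,W)\leq(D+1)d_{\mathcal{T}}(\phi(V),\phi(W))+D$.

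The main obstacle is exactly this diameter bound. Lemma~\ref{lem:precursors} controls only \emph{adjacent} pairs on a single full sphere, whereas the path inside $C$ can be arbitrarily long, so the disc-diagram reductions must be performed globally on $\gamma$ while keeping track of how the fixed-carrier decomposition of $\partial_p E$ evolves under each Lemma~\ref{lem:fixedcorners} move. The delicate bookkeeping---verifying that every configuration of dual curves either strictly reduces complexity or strictly shortens the underlying path in $\Gamma-\bar{B}_{n-1}(V^0)$, and that the minimal case genuinely forces a common contact---is the technical heart of the argument. As a cleaner fallback, one could bypass the explicit construction of $\mathcal{T}$ altogether by verifying Manning's bottleneck criterion for $\Gamma$ directly, exploiting the observation that any hyperplane $U$ separating $V$ from $W$ in $\widetilde{X}$ must coincide with or be crossed by some hyperplane on every $\Gamma$-path between them.
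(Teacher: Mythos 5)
Your construction of $\mathcal{T}$, the well-definedness of the parent relation, the map $\phi$, the $1$-Lipschitz estimate, and the reduction of the quasi-isometry to a uniform diameter bound on roots all match the paper's second proof, and the bottleneck fallback you mention at the end is precisely the paper's first proof (your observation about separating hyperplanes being crossed by every $\Gamma$-path is exactly the key point there). The gap is in the diameter bound, and the mechanism you sketch does not work as stated. You propose a minimal-complexity diagram for a polygon $\gamma$ stitched from the carriers $N(X_i)$ of the root path and claim every dual curve either contradicts minimality or certifies a shortcut that strictly shortens $X_0,\ldots,X_m$. But if both branches of this dichotomy strictly reduce complexity, the minimal diagram has no dual curves at all, hence area zero, forcing $m=0$ -- impossible when $W_1\neq W_2$. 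Some dual curves must survive, and nothing in your sketch bounds how many or of what sort. Worse, a dual curve joining $P_i$ to $P_j$ with $|i-j|>1$ maps to a hyperplane $Y$ that is $\Gamma$-adjacent to both $X_i$ and $X_j$, but your polygon never reaches $V^0$, so you have no control on the grade of $Y$; if $Y$ has grade $<n$, the replacement $X_i\coll Y\coll X_j$ exits $\Gamma-\bar B_{n-1}(V^0)$ and is not a legal shortcut inside the root.

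The missing idea is an induction on the grade $n$. The paper's polygon in Lemma~\ref{lem:HyperplaneGraphSphericalComp} is a loop running from $V^0$ out along a $\Gamma$-geodesic to $V_1^n$, across via the root path to $V_2^n$, and back along a $\Gamma$-geodesic to $V^0$, i.e. $P=P_0P_1^1\cdots P_1^nQ_1\cdots Q_mP_2^n\cdots P_2^1$. The $V^0$-side is what pins down the grades of hyperplanes that dual curves map to, and in particular forces $|P_0|=0$, i.e. $V_1^1\coll V_2^1$. One proof then re-bases the grading at $V_1^1$, observes that $V_1^n$ and $V_2^{n-1}$ have grade $n-1$ in a common root at the new base, and invokes induction to get $M\leq 5$; the sharper proof inducts directly on the precursors $V_i^{n-1}$ and applies a separate low-grade disc-diagram lemma (Lemma~\ref{lem:THREE}, which is where the real bookkeeping lives) to get $M\leq 4$. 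Neither proof tries to shorten the root path $m$ directly; the grade induction, absent from your plan, is what supplies termination.
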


\begin{proof}[Proof of Theorem~\ref{thm:HGisQT} using the bottleneck criterion]
Manning's ``bottleneck'' criterion, introduced in \cite{ManningPseudocharacters}, is as follows:\\

\textit{The geodesic metric space $(Y,d)$ is quasi-isometric to a simplicial tree if and only if there exists $\delta>0$ such that, for any two points $x,y\in Y$, there exists a midpoint $M=M(x,y)$ such that $d(M,x)=d(M,y)=\frac{1}{2}d(x,y)$ and any path joining $x$ to $y$ contains a point within $\delta$ of $M$.}\\

Let $V_0,V_n$ be hyperplanes, and let $\{V_i\}_{i=1}^{n-1}$ be the set of hyperplanes $V_i$ such that $V_0$ and $V_n$ lie in distinct halfspaces associated to $V_i$, i.e. the \emph{set of hyperplanes separating $V_0$ and $V_n$}.  Then, for each $i$, any path in $\Gamma$ joining $V_0$ to $V_n$ must either contain $V_i$ or contain some hyperplane that crosses $V_i$.  Indeed, if $V_0=W_0\coll W_1\coll\ldots\coll W_m=V_n$ is a path, then there exists a path $Q=Q_0Q_1\ldots Q_n$ in $\widetilde X$, where each $Q_j$ lies in $N(W_j)$.  Now, since $V_i$ separates $W_0$ from $W_m$, the path $Q$ must contain a 1-cube $c$ dual to $V_i$.  For some $j$, the path $Q_j$ contains $c$.  Either $W_j=V_i$ and $c$ is dual to $W_j$, or $c\subset N(W_j)$ is not dual to $W_j$, and hence $V_i$ and $W_j$ cross.

Let $V_0=U_0\coll U_1\coll\ldots\coll U_m=V_n$ be a geodesic path in $\Gamma$ joining $V_0$ to $V_n$.  Let $M$ be the midpoint of this path, so that either $M=U_{m/2}$ or $M$ is the midpoint of $U_{(m-1)/2}\coll U_{(m+1)/2}$, according to the parity of $m$.  Now, there exists $i\leq n-1$ such that $d_{\Gamma}(M,V_i)\leq\frac{3}{2}$.  Indeed, either $U_{m/2}=M$ is equal to, or crosses, some $V_i$, or $U_{(m\pm1)/2}$ is equal to, or crosses, some $V_i$, by Lemma~\ref{lem:crossseparators}.  Here we have assumed that $m\geq 2$, in order to apply Lemma~\ref{lem:crossseparators}.  If $m=1$, then $U_0\coll U_m$, and the midpoint $M$ of this edge obviously satisfies Manning's criterion with $\delta=\frac{1}{2}$.

Let $V_0=W_0\coll W_1\coll\ldots\coll W_p=V_n$ be some other path joining $V_0$ to $V_n$.  Then some $W_j$ either crosses or coincides with $V_i$, so that $d_{\Gamma}(W_j,M)\leq d_{\Gamma}(W_j,V_i)+d_{\Gamma}(V_i,M)\leq\frac{5}{2}$.  Thus Manning's criterion is verified with $M(V_0,V_n)=M$ and $\delta=\frac{5}{2}$.
\end{proof}

\begin{lem}\label{lem:crossseparators}
Let $U_0\coll U_1\coll\ldots\coll U_m$ be a geodesic of $\Gamma$, and let $\{V_i\}_{i=0}^{n-1}$ be the set of hyperplanes separating $U_0$ from $U_m$.  Then for each $j$ with $1\leq j\leq m-1$, there exists $i$ such that $d_{\Gamma}(U_j,V_i)\leq 1$.
\end{lem}

\begin{proof}
Let $Q$ be a geodesic segment beginning on $N(U_0)$ and ending on $N(U_m)$, chosen as short as possible, so that the set of hyperplanes that are dual to 1-cubes of $Q$ is exactly $\{V_i\}$.  For $0\leq k\leq m$, let $P_k\rightarrow N(U_k)$ be a geodesic segment, chosen so that the $P_k$ are concatenable, i.e. there is a path $P=P_0P_1\ldots P_m\rightarrow\widetilde X$, and suppose that $P$ joins the endpoints of $Q$.  Let $D\rightarrow\widetilde X$ be a disc diagram bounded by $P$ and $Q$, and suppose that the choices of $P,Q,D$ are made so that $\area(D)$ is as small as possible.  Choose $j\leq m$, and consider the dual curves in $D$ that emanate from $P_j$.  There must be at least one such dual curve, since otherwise $U_{j-1}\coll U_{j+1}$, contradicting the fact that $U_0\coll\ldots\coll U_m$ is a geodesic.  If $K$ is a dual curve that emanates from $P_j$ and ends on $Q$, then $K$ maps to a hyperplane $V_i$ that separates $U_0$ from $U_p$, whence $d_{\Gamma}(U_j,V_i)=1$.  In this case, the proof is complete: we have found a separating hyperplane $V_i$ that crosses $U_j$.

Otherwise, each such $K$ ends on $P_s$ for some $s\leq m$.  Now, $s\neq j$, since $P_j$ is a geodesic, and $|s-j|\neq1$, since otherwise, if $K$ traveled from $P_j$ to $P_{j+1}$, we could perform hexagon moves and removal of spurs to choose a smaller choice of $D$, fixing the carriers $\{N(U_k)\}$.  On the other hand, if $K$ were to travel from $P_j$ to $P_{s}$ with $|s-j|\geq 3$, then there would be a path $U_s\coll W\coll U_j$ in $\Gamma$, where $W$ is the hyperplane to which $K$ maps, showing that $d_{\Gamma}(U_j,,U_s)\leq 2$, a contradiction.  Hence assume that every dual curve emanating from $P_j$ ends on $P_{j+2}$ or $P_{j-2}$.  Moreover, by minimality of area, no two such dual curves cross.  Now, label the dual curves $K_1,\ldots,K_c$ so that $K_q$ is dual to the $q^{th}$ 1-cube of $P_j$, measuring from $P_j\cap P_{j-1}$ to $P_{j}\cap P_{j+1}$.  If each $K_q$ ends on $P_{j-2}$, then $U_{j-2}\coll W_c\coll U_{j+1}$, where $W_c$ is the hyperplane to which $K_c$ maps, and this is a contradiction.  Similarly, if each $K_q$ ends on $P_{j+2}$, then $U_{j-1}\coll W_1\coll U_{j+2}$, another contradiction.  Hence we have $1<a<c$ such that $K_1,\ldots,K_a$ travel from $P_j$ to $P_{j-2}$ and $K_{a+1},\ldots,K_c$ travel from $P_j$ to $P_{j+2}$.  But then $U_{j-2}\coll W_a\coll W_{a+1}\coll U_{j+2}$, contradicting the fact that $d_{\Gamma}(U_{j-2},U_{j+2})=4$.  Hence some $K_q$ must end on $Q$ and map to a hyperplane separating $U_0$ from $U_m$.  See Figure~\ref{fig:bottleneck}.
\end{proof}

\begin{figure}[h]
  \includegraphics[width=3in]{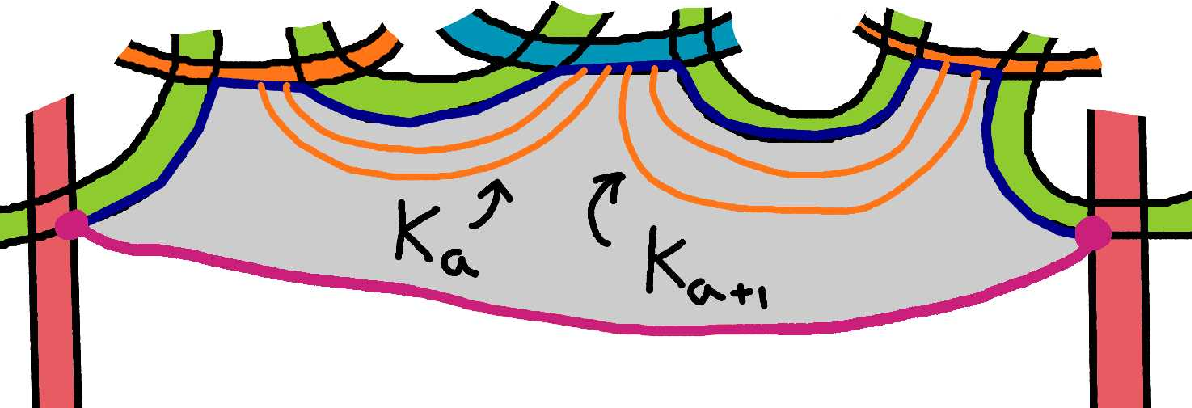}\\
  \caption{The dual curves emanating from $P_j$ cannot all end on $P$.}\label{fig:bottleneck}
\end{figure}

Theorem~\ref{thm:HGisQT} is also provable using disc diagrams:

\begin{proof}[Proof of Theorem~\ref{thm:HGisQT} using hyperplane grading]
Fix a base vertex $V^0$ of $\Gamma$.  The resulting \emph{graded root-tree} $\mathcal T$ is the following graph.  The 0-skeleton of $\mathcal T$ is the set $\coprod_{n\geq 0}\mathcal C^n$.  Edges join vertices in $\mathcal C^n$ to vertices in $\mathcal C^{n+1}$.  Precisely, if $C^n\in\mathcal C^n$ and $C^{n+1}\in\mathcal C^{n+1}$, then $C^n$ is adjacent to $C^{n+1}$ if and only if $C^n$ contains a vertex of $\Gamma$ that is adjacent to a vertex of $C^{n+1}\subset\Gamma$.

$\mathcal T$ is a tree.  To see this, note that for each $n\in\naturals$, no two vertices in $\mathcal C^n$ are adjacent, so that the presence of a cycle in $\mathcal T$ implies that for some $n$, there is a $C^{n+1}\in\mathcal C^{n+1}$ that is adjacent to two distinct vertices $C_1^n,C_2^n\in\mathcal C^n$.  It follows that there are hyperplanes $V_i^{n+1}\in C^{n+1},\,V_i^n\in C_i^n$ for $i=1,2$ such that $V_i^n\coll V_i^{n+1}$.  By definition, $V_1^{n+1}$ and $V_2^{n+1}$ are joined by a path in $\Gamma-\bar B_n(V^0)$, and we thus have a path in $\Gamma-\bar B_{n-1}(V^0)$ joining the $V_i^n$, so that $C_1^n=C_2^n$, a contradiction.

$\Gamma$ is quasi-isometric to $\mathcal T$.  Indeed, consider the map $\phi:\Gamma\rightarrow\mathcal T$ such that $\phi$ sends each hyperplane $V^n$ to the unique root of $\bar S_n(V^0)$ containing it, and does likewise for edges that have both endpoints in the same full sphere.  The remaining edges of $\Gamma$ join hyperplanes in roots of $\bar S_n(V^0)$ to $\mathcal T$-adjacent roots of $S_{n+1}(V^0)$, for $n\geq 0$.  These edges map isometrically to the corresponding edges of $\mathcal T$.  The map $\phi$ is surjective and a quasi-isometric embedding by Lemma~\ref{lem:HyperplaneGraphSphericalComp}.
\end{proof}

Lemma~\ref{lem:HyperplaneGraphSphericalComp} asserts the existence of a uniform bound on the diameters of the roots in the graded graph $\Gamma$.  

\begin{lem}\label{lem:HyperplaneGraphSphericalComp}
There exists a constant $M$ such that for any $n\geq 0$ and any base hyperplane $V^0$, if $C\in\mathcal C^n$, then $\diam_{\Gamma}(C)\leq M$.
\end{lem}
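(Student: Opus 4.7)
The plan is to find a uniform constant $M$, independent of both $n$ and the base hyperplane $V^0$, that bounds the $\Gamma$-diameter of every grade-$n$ root; given the advertised numerical output, $M=5$ should come out easily and $M=4$ with more care. The argument will proceed via a disc-diagram analysis structurally analogous to Lemma~\ref{lem:precursors} and Lemma~\ref{lem:Connectedancestor}, but using a boundary path that straddles all grades from $0$ up to grade $\geq n$.

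Given $U_1,U_2 \in C$, I would first exploit the fact that they lie in a common path-component of $\Gamma-\bar B_{n-1}(V^0)$ to fix a \emph{shortest} path $U_1=W_0\coll W_1\coll\cdots\coll W_k=U_2$ with every $W_i$ of grade $\geq n$, together with $\Gamma$-geodesics $\sigma_j: V^0\coll V^1_j\coll\cdots\coll V^{n-1}_j\coll U_j$ for $j=1,2$. I would then promote the resulting closed walk in $\Gamma$ to a closed combinatorial path $\gamma$ in $\widetilde X$ by concatenating combinatorial geodesic subpaths through the consecutive carriers $N(V^0), N(V^1_1),\ldots, N(U_1), N(W_1),\ldots, N(W_{k-1}), N(U_2),N(V^{n-1}_2),\ldots$, and pick a disc diagram $D\rightarrow\widetilde X$ with $\partial_pD=\gamma$ of minimal complexity over all such choices of shortest connecting path, $\Gamma$-geodesics, carrier-subpaths, and diagram.

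Two constraints then drive the analysis. The first is Lemma~\ref{lem:fixedcorners}: no dual curve in $D$ has both ends on the same designated subpath, no dual curve emanating from one designated subpath terminates on an adjacent one except trivially, and no two dual curves emanating from a common designated subpath cross. The second is the \emph{grade constraint}: if a dual curve $K$ has an end on a subpath mapping into $N(A)$, then the hyperplane to which $K$ maps contacts $A$, so its grade lies in $\{\mathrm{grade}(A)-1,\mathrm{grade}(A),\mathrm{grade}(A)+1\}$; consequently, the grades of the two carriers hosting $K$'s ends differ by at most $2$. Since the sides of $\partial_pD$ traverse carriers of all grades $0,1,\ldots,n$ while the top traverses carriers of grade $\geq n$, these constraints force every dual curve from the top into one of two options: (i) it terminates on one of the sides at grade $\geq n-2$, or (ii) it re-enters the top. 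In case (ii), its hyperplane $H$ contacts two $W_i$'s and so allows a sub-arc of $W_0\coll\cdots\coll W_k$ to be replaced by $H$; if $\mathrm{grade}(H)\geq n$ this contradicts the chosen minimality of $k$, while if $\mathrm{grade}(H)<n$ it directly furnishes a short $\Gamma$-bridge between $U_1$ and $U_2$ through $H$. In case (i), iterating the grade constraint down the sides and invoking Lemma~\ref{lem:FootprintAdjacency} to stitch together adjacent ancestors produces a contact-chain of at most $5$ hyperplanes joining $U_1$ to $U_2$.

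The main obstacle I expect is the combinatorial bookkeeping required to rule out \emph{all} long top paths and reduce every dual-curve configuration to cases (i) or (ii): one must carefully check that the many possible re-routings of the shortest connecting path, of the $\Gamma$-geodesics $\sigma_1,\sigma_2$, or of the carrier-subpaths do not produce a lower-complexity diagram violating the set-up, and that the reduction is genuinely independent of $n$ and $k$. Once this reduction is complete, the bound $M=5$ falls out by counting hyperplanes in the extracted bridge, and a finer case analysis of which of the three grades $n-2,n-1,n$ the intermediate bridge hyperplanes can occupy should sharpen this to $M=4$.
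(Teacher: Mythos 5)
Your plan correctly identifies the disc-diagram machinery (fixed-carrier diagrams, Lemma~\ref{lem:fixedcorners}, the "grade constraint" that a dual curve joins carriers whose grades differ by at most $2$, and hyperplane-replacement moves), and the boundary path you propose is essentially the one the paper uses in the bounded-grade Lemma~\ref{lem:THREE}. But your outline has a genuine gap: you are missing the \emph{inductive regrading} step that is the actual engine of the paper's proof. Both of the paper's arguments proceed by induction on $n$. In the first proof, the disc diagram is used only to show $|P_0|=0$, i.e.\ $V_1^1\coll V_2^1$; the proof then \emph{changes the base hyperplane to $V_1^1$}, observes that $V_1^n$ and $V_2^{n-1}$ now have grade $n-1$ and lie in a common root, and invokes the inductive hypothesis. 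In the second proof, the induction is pushed onto the precursors $V_1^{n-1},V_2^{n-1}$, and the $n=3$ case (Lemma~\ref{lem:THREE}) is what the diagram argument actually establishes; the general case follows by regrading from a hyperplane on the short geodesic joining the precursors. Your outline attempts to avoid this by reading the bound directly off one big diagram, but the grade constraint does not prevent dual curves from running "sideways" at intermediate grades --- e.g.\ from $P_1^j$ to $P_2^j$ or $P_2^{j\pm 1}$ for $3\leq j\leq n-3$ --- and none of the listed complexity reductions rules those out or turns them into a short bridge between $U_1$ and $U_2$. The phrase "iterating the grade constraint down the sides and invoking Lemma~\ref{lem:FootprintAdjacency} to stitch together adjacent ancestors" is the place where a real argument is needed and none is given; Lemma~\ref{lem:FootprintAdjacency} moreover requires a \emph{common} precursor, which is not available to you at this point.

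Concretely: once you have shown (as the paper does) that dual curves from $P_0$ have no legal destination, you get $|P_0|=0$ and hence $V_1^1\coll V_2^1$, but the analogous claim $|P_1^1|=0$ is false in general --- a dual curve from $P_1^1$ can legitimately terminate on $P_2^1$, $P_2^2$ or $P_2^3$ without any violation of minimal complexity or of the grade constraint, and this is exactly the configuration that the regrading step is designed to handle. So the direct, non-inductive version of the argument you sketch does not close. If you instead prove the bounded-depth statement (say your Claim for $n\leq 3$, which is Lemma~\ref{lem:THREE} in the paper) by the diagram argument you describe, and then add an induction on $n$ via regrading at a precursor, the proof will go through and yields $M\leq 4$.
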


\begin{proof}
Argue by induction on the grade $n$ of $C=C^n$.  Since $\mathcal T$ is a tree, there is a unique sequence $C^0,C^1,\ldots,C^n$ of roots joining $C^0$ to $C^n$, i.e. for $0\leq i\leq n-1$, if $V^{i+1}\in C^i$ and $V^i$ is a precursor of $C^{i+1}$, then $V^i\in C^i$.

Let $V_1^n,V_2^n\in C^n$.  By definition, there is a path
\[\rho=V_1^n\coll U_1\coll U_2\ldots\coll U_m\coll V_2^n\]
in $\Gamma$ of minimal length so that $U_i$ has grade at least $n$ for $1\leq i\leq m$.  For $i\in\{1,2\}$, choose $\Gamma$-geodesics
\[\sigma_i=V^0\coll V_i^1\coll\ldots\coll V_i^{n-1}\coll V_i^n\]
joining $V^0$ to $V_i^n$.  Note that for each $k\leq n$, the hyperplane $V_i^k$ has grade exactly $k$ and lies in $C^k$.  For each $i\in\{1,2\}$ and each $k\leq n$, choose a geodesic segment $P_i^k\rightarrow N(V_i^k)$, and for $1\leq j\leq m,$ choose a geodesic segment $Q_j\rightarrow N(U_j)$ so that the above geodesics are concatenable, i.e. there is a closed path
\[P=P_0P_1^1P_1^2\ldots P^n_1Q_1Q_2\ldots Q_mP^n_2\ldots P_2^1\]
mapping to $\widetilde X$ and bounding a disc diagram $D\rightarrow\widetilde X$ with fixed carriers for the given hyperplanes. Suppose that $P$ and $D$ are chosen so that $D$ has minimal complexity for all such diagrams with those fixed carriers.  Moreover, suppose that paths $\rho$ and $\sigma_i$ joining $V^0,V_1^n,V_2^n$ are chosen so that $D$ has minimal area among all such minimal complexity fixed-carrier diagrams constructed in this way.  See Figure~\ref{fig:Nis32}.

\begin{figure}[h]
  \includegraphics[width=2in]{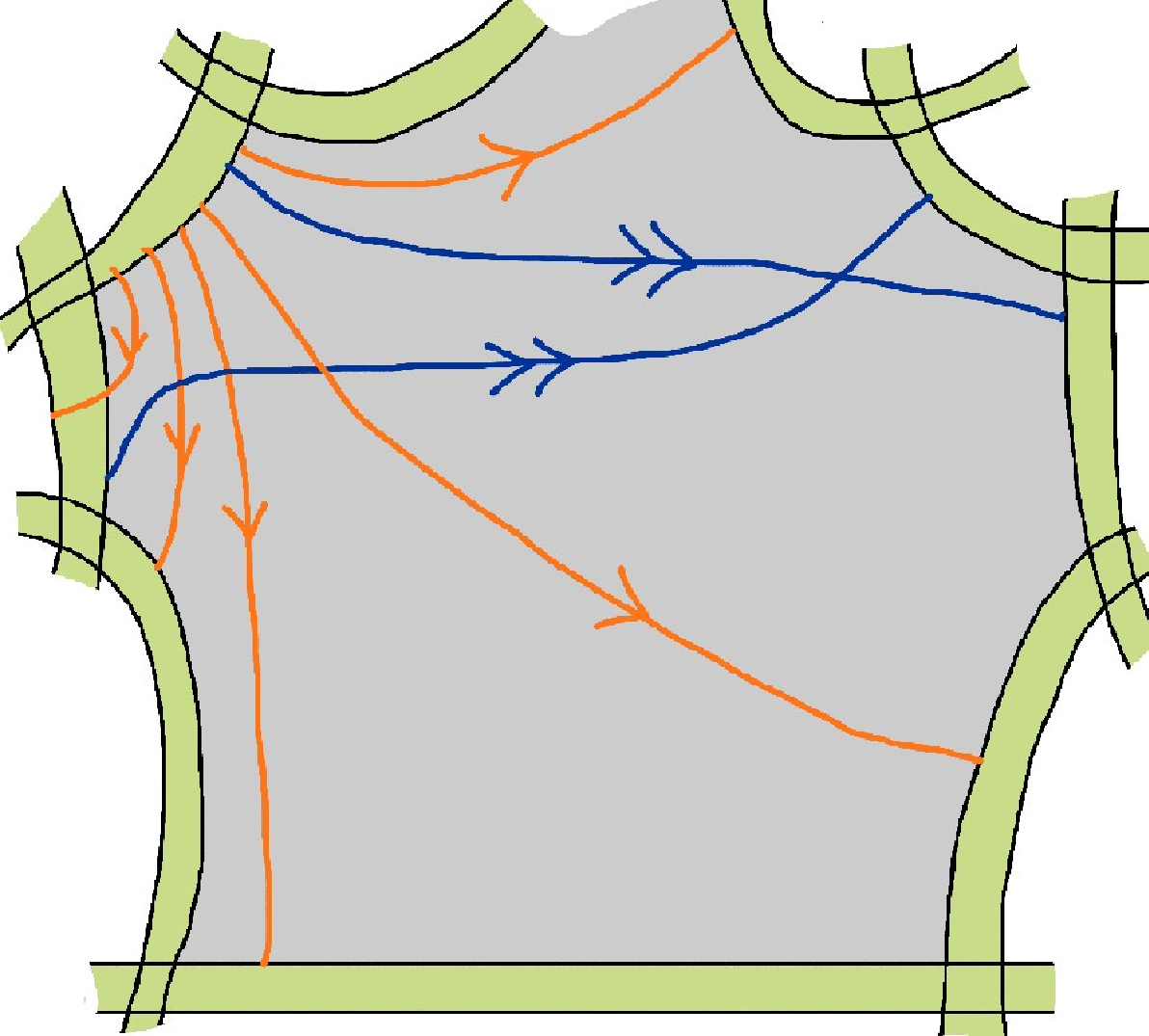}\\
  \caption{The case $n=3$.  Double-arrowed dual curves are possible and single-arrowed ones lead to various contradictions.}\label{fig:Nis32}
\end{figure}

Observe that each path $P_i^k$ for $1\leq k\leq n-1$ has length at least 1, for otherwise $V_i^{k-1}\coll V_i^{k+1}$, contradicting the grading.  Similarly, each $Q_j$ has length at least 1, for otherwise $U_{j-1}\coll U_{j+1}$ (or, e.g. $V_1^n\coll U_2$ when $j=1$), contradicting the minimum-length assumption on $\rho$.

Suppose that $n\geq 3$.  Then $|P_0|=0$.  Indeed, if $K$ is a dual curve emanating from $P_0$, then $K$ cannot end on any $Q_j$, since $K$ maps to a grade-1 hyperplane.  Similarly, $K$ can't end on $P_i^k$ for $k>2$.  If $K$ ends on $P_i^2$, then we can replace $D$ with a proper subdiagram by replacing $V_i^1$ by the hyperplane to which $K$ maps.  If $K$ ends on $P_i^1$, then we apply Lemma~\ref{lem:fixedcorners} to produce a lower-complexity fixed-carrier diagram.  If $K$ has two ends on $P_0$, then $P_0$ is not a geodesic, a contradiction.  Hence $K$ cannot exist, so $|P_0|=0$ and $V_1^1\coll V_2^1$.

When $n=3$, we thus have a path $V_1^3\coll V_1^2\coll V_1^1\coll V_2^1\coll V_2^2\coll V_2^3$ of length 5 joining $V_1^3$ to $V_2^3$.  For $n\leq 3$, it is thus evident that $d_{\Gamma}(V_1^j,V_2^k)\leq 4$ when $1\leq j<k\leq n$.

Now, with respect to $V_1^1$, the hyperplanes $V_1^n$ and $V_2^{n-1}$ have grade $n-1$ and lie in the same root based at $V_1^1$.  By induction, there is a path $V_1^n\coll W_1\coll\ldots\coll W_d\coll V_2^{n-1}$ with $d\leq 4$.  Hence for some $W_p$, the hyperplanes $V_1^n$ and $V_2^n$ lie in the full closed 3-ball about $W_p$ and thus $d_{\Gamma}(V_1^n,V_2^n)\leq 5$, by applying the same argument to the full 3-ball centered at $W_p$.
\end{proof}



In fact, the property of $\Gamma$ stated in Lemma~\ref{lem:HyperplaneGraphSphericalComp} characterizes graphs quasi-isometric to trees \cite{KroenMoeller08}.  This fact was proved independently in~\cite{CDNRV}.

\section{Weak hyperbolicity of cubulated groups and quasi-arboreal groups}\label{sec:weakrel}
It follows from Theorem~\ref{thm:HGisQT} that cocompactly cubulated groups satisfy a strong form of weak hyperbolicity, in which the coned-off Cayley graph with respect to hyperplane stabilizers is not only $\delta$-hyperbolic but is actually a quasi-tree.

\subsection{Weak hyperbolicity and quasi-arboreality.}
Farb defined a notion of relative hyperbolicity in terms of a \emph{coned-off Cayley graph} in which the peripheral subgroups are associated to cone-points.  While the additional property of \emph{bounded coset penetration} is needed to actually obtain relative hyperbolicity in the sense of Gromov \cite{Gromov87}, the following notion of \emph{weak hyperbolicity} is of interest.

\begin{defn}[Weak hyperbolicity,~\cite{FarbThesis}]\label{defn:weakrel1}
Let $G$ be a finitely generated group and $\left\{G_W\right\}$ a finite collection of subgroups.  Let $\Gamma$ be the graph obtained from the Cayley graph of $G$ with respect to some finite generating set as follows.  To the Cayley graph, add a vertex $gG_W$ for each distinct coset of each $G_W$, and join each $gG_W$ by an edge to each vertex of the Cayley graph corresponding to an element of $gG_W$.  The graph $\Gamma$ is the \emph{coned-off Cayley graph} of $G$ relative to $\{G_W\}$.  If there exists $\delta$ such that $\Gamma$ is $\delta$-hyperbolic, then $G$ is \emph{weakly hyperbolic relative to the collection} $\{G_W\}$.
\end{defn}

Bowditch gave another definition, in which the coned-off Cayley graph is replaced by a $G$-graph with similar properties.

\begin{defn}[Weak hyperbolicity \cite{Bowditch97}]\label{defn:relhyp}
Let $G$ be a group and $\{G_W\}$ a finite collection of subgroups.  $G$ is \emph{weakly hyperbolic} relative to $\{G_W\}$ if $G$ acts by isometries on a graph $\Gamma$ with the following properties:
\begin{enumerate}
  \item $\Gamma$ is $\delta$-hyperbolic for some $\delta$.
  \item There are finitely many $G$-orbits of edges.
  \item Each $G_W$ fixes a vertex of $\Gamma$ and each vertex stabilizer contains a conjugate of some $G_W$ as a subgroup of finite index.
\end{enumerate}
A $G$-graph $\Gamma$ satisfying the latter two properties is a \emph{generalized coset graph} for the pair $(G,\{G_W\})$, so that weak hyperbolicity amounts to the existence of a $\delta$-hyperbolic generalized coset graph.
\end{defn}

A stronger property is:

\begin{defn}[Quasi-arboreal group]
Let $G$ be a group and $\{G_W\}$ a finite collection of subgroups for which there is a generalized coset graph $\Gamma$ such that $\Gamma$ is quasi-isometric to a tree.  Then $G$ is \emph{quasi-arboreal relative to the collection} $\{G_W\}$.
\end{defn}

\subsection{Quasi-arboreality and cones on hyperplanes.}
Let $G$ be a finitely generated group acting on the CAT(0) cube complex $\widetilde X$.  Then $G$ acts on the contact graph $\Gamma$ by isometries, and the stabilizer of each vertex of $\Gamma$ is exactly the stabilizer of the corresponding hyperplane.

The following discussion is therefore extraneous to the proof of Corollary~\ref{cor:weakrelredux}, but gives a concrete viewpoint on the contact graph.
Let $\widetilde X$ be a CAT(0) cube complex with a set $\mathcal W$ of hyperplanes.  The \emph{coned-off complex} $\widetilde X^*$ is obtained from $\widetilde X$ by adding a cone on $N(W)$ for each $W\in\mathcal W$.  More precisely,

\begin{equation*}
    \widetilde X^*=\widetilde X\sqcup\left(\coprod_{W\in\mathcal W}N(W)\times[-1,1]\right)\diagup\left\{N(W)\times\{1\},\,N(W)\sim N(W)\times\{-1\}\right\}.
\end{equation*}

Associated to each hyperplane is a \emph{cone-point}, which is joined by a \emph{cone-edge} to each 0-cube in the corresponding hyperplane carrier.

The \emph{coned-off hyperplane graph} is $C(\widetilde X)=\left(\widetilde X^*\right)^{(1)}$.  When endowed with the combinatorial metric, $C(\widetilde X)$ is quasi-isometric to $\Gamma$, and to $\widetilde X^*$ when $\widetilde X$ is finite-dimensional.

Indeed, choose a map $\Gamma\rightarrow C(\widetilde X)$ that sends each vertex to the cone-point over the corresponding hyperplane.  Each edge joins a pair of vertices corresponding to a pair of cone-points joined by a path in $C(\widetilde X)$ that is a concatenation of two cone-edges.  Each edge of $\Gamma$ maps linearly to some such length-2 path, giving a $(2,0)$ quasi-isometric embedding $\Gamma\rightarrow C(\widetilde X)$.  To see this, let $U,V$ be hyperplanes, and let $u,v$ be the corresponding cone-points of $C(\widetilde X)$.  For any path $U=W_1\coll\ldots\coll W_n=V$ in $\Gamma$ joining $U$ to $V$, there exists a path $P_1P_2\ldots P_{n-1}$ in $C(\widetilde X)$ joining $u$ to $v$ and having length $2(n-1)$.  Indeed, $P_i$ is a path consisting of two consecutive cone-edges, traveling from the cone-point associated to $W_i$ to that associated to $W_{i+1}$ via a 0-cube of $N(W_i)\cap N(W_{i+1})$, which exists by the definition of contacting hyperplanes.  Hence $d_{C(\widetilde X)}(u,v)\leq 2d_{\Gamma}(U,V)$.

On the other hand, let $P$ be a geodesic path joining $u$ to $v$, so that
\[P=K'_1B_1K_2B_2\ldots K_nB_nK'_{n+1},\]
where each $B_i$ is a (possibly trivial) path in $\widetilde X^{(1)}$, each $K_i$ is a concatenation of two cone-edges containing a single cone-point, and $K'_1,K'_{n+1}$ are single cone-edges.  Now, let $c$ be a length-1 subpath of some $B_i$.  Then there exists a path $EF$, consisting of cone-edges, with the same endpoints as $c$.  Indeed, $EF$ travels from the initial point of $c$, through the cone-point associated to the hyperplane dual to $c$, and ends at the terminus of $c$.  Hence there is a path of length $m=\sum_i|K_i|+2\sum_i|B_i|+2$ in $C(\widetilde X)$ that joins $u$ and $v$ and consists entirely of cone-edges; this path is obtained by replacing each 1-cube of each $B_i$ by a length-2 cone-path in the preceding manner.  This path has the form $E_1F_1\ldots E_kF_k$, where each $E_i$ travels from a cone-point to a 0-cube of $\widetilde X^{(1)}$, and each $F_i$ travels from a 0-cube to a cone-point.  Let $U_i$ be the hyperplane corresponding to the cone-point that is the initial point of $E_i$ (and terminal point of $F_{i-1}$).  Then $U_1\coll U_2\coll\ldots\coll U_{k+1}$ is a path in $\Gamma$ joining $U$ to $V$ and having length $k=m/2$.  Now, $|P|=\sum_i|K_i|+\sum_i|B_i|+2\geq m/2$, so that $d_{C(\widetilde X)}(u,v)\geq d_{\Gamma}(U,V)$.

Since every point of $\widetilde X$ lies in some hyperplane carrier, every point of $C(\widetilde X)$ lies at distance at most $\frac{3}{2}$ from some cone-point, so that the map is quasi-surjective.  Thus $C(\widetilde X)$ is a quasi-tree by Theorem~\ref{thm:HGisQT}.  That $\widetilde X^*$ is quasi-isometric to $C(\widetilde X)$ when $\widetilde X$ is finite-dimensional follows easily from the fact that, if $\dimension\widetilde X<\infty$, then $\widetilde X$ and $\widetilde X^{(1)}$ are quasi-isometric (see e.g.~\cite{CapraceSageev} for a proof).

Let $G$ be a group acting properly and cocompactly on the CAT(0) cube complex $\widetilde X$.  Then $G$ acts by isometries on $C(\widetilde X)$, with this action extending that of $G$ on $\widetilde X^{(1)}$.  The stabilizer of a vertex of $C(\widetilde X)$ is finite when the vertex is a 0-cube of $\widetilde X$ and equal to $G_W$, the stabilizer of the hyperplane $W$, for the vertex corresponding to $W$.  The 1-cubes of $\widetilde X$ have finite stabilizers by properness, and the cone-edges are finitely stabilized since they each have an initial vertex that is a 0-cube.  Moreover, by cocompactness, there are finitely many orbits of edges.

\begin{cor}\label{cor:weakrelredux}
Let $G$ act on the CAT(0) cube complex $\widetilde X$.  Then $G$ acts on a graph $\Gamma$ that is quasi-isometric to a tree, such that the stabilizers of hyperplanes in $\widetilde X$ correspond to the stabilizers of vertices in $\Gamma$.

Furthermore, let $G\cong\pi_1X$, with $X$ a nonpositively-curved cube complex with $\mathcal W$ the set of immersed hyperplanes in $X$.  Suppose that $\mathcal W$ is finite and that there are finitely many contacts between immersed hyperplanes in $X$.  (For instance, these hypotheses are satisfied when $X$ is compact.)  Then $G$ is quasi-arboreal relative to the set $\left\{\pi_1W\,:\,W\in\mathcal W\right\}$.
\end{cor}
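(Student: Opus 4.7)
The plan is to deduce the first assertion directly from Theorem~\ref{thm:HGisQT}, and then to verify Bowditch's conditions (Definition~\ref{defn:relhyp}) for the contact graph $\Gamma$ equipped with its natural $G$-action, under the finiteness hypotheses on immersed hyperplanes.

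The first assertion is essentially immediate. Any cubical action of $G$ on $\widetilde X$ permutes hyperplanes and preserves the contact relation, so it induces an action of $G$ on $\Gamma$ by graph automorphisms. By construction, the stabilizer in $G$ of the vertex of $\Gamma$ corresponding to a hyperplane $W$ is exactly the hyperplane stabilizer $G_W$. Theorem~\ref{thm:HGisQT} then says $\Gamma$ is quasi-isometric to a tree, giving the first statement.

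For the second assertion, I would take $\widetilde X \to X$ to be the universal cover and realize $G = \pi_1 X$ as the deck group, so that $G$ acts freely and cellularly on $\widetilde X$, and then check the three conditions of Definition~\ref{defn:relhyp} for the $G$-graph $\Gamma$. Hyperbolicity of $\Gamma$ is supplied by Theorem~\ref{thm:HGisQT}. For the vertex-stabilizer condition, for each immersed hyperplane $\bar W \in \mathcal W$ I would choose a component $W$ of its preimage in $\widetilde X$; then $W \to \bar W$ is a covering whose deck group equals $G_W$, so $G_W$ is conjugate in $G$ to $\pi_1 \bar W$. Hence each $\pi_1 \bar W$ fixes a vertex of $\Gamma$ and every vertex stabilizer equals a conjugate of some $\pi_1 \bar W$.

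Finally, for finiteness of edge orbits, I would note that the edges of $\Gamma$ correspond bijectively to unordered pairs of contacting hyperplanes of $\widetilde X$, and $G$-equivariance of the covering projection identifies $G$-orbits of such pairs with unordered pairs (possibly self-pairs) of contacting immersed hyperplanes in $X$. The hypotheses that $\mathcal W$ is finite and that there are only finitely many contacts between its elements then bound the number of edge orbits, completing the verification. The only delicate bookkeeping lies in the basepoint-sensitive identification of $G_W$ with a conjugate of $\pi_1 \bar W$, but this is standard covering-space theory, and the looseness of Definition~\ref{defn:relhyp} (which permits finite-index subgroups of conjugates) absorbs any ambiguity; I do not anticipate any serious obstacle beyond this routine packaging.
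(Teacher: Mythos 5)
Your proposal is correct and takes the same approach as the paper: it uses the contact graph $\Gamma$ itself as the generalized coset graph and verifies Bowditch's three conditions (Definition~\ref{defn:relhyp}), with hyperbolicity supplied by Theorem~\ref{thm:HGisQT}. The paper presents this more tersely, folding the verification into the discussion preceding the corollary (where it also develops the coned-off complex $C(\widetilde X)$ as a ``concrete viewpoint'' that it explicitly calls extraneous to the proof). One small imprecision: $G$-orbits of edges in $\Gamma$ are in bijection with \emph{contacts} between immersed hyperplanes in $X$, not with unordered pairs of contacting immersed hyperplanes --- two distinct contacts between the same pair $\bar W_1, \bar W_2$ give distinct edge-orbits --- but since the hypothesis bounds the number of contacts, your conclusion that there are finitely many edge-orbits still follows.
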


\begin{exmp}
The following groups act on quasi-trees by virtue of their actions on CAT(0) cube complexes.
\begin{enumerate}
  \item Finitely presented groups satisfying the $B(4)-T(4)$ small-cancellation condition act properly and cocompactly on CAT(0) cube complexes, and $B(6)$ groups act properly on CAT(0) cube complexes with finitely many orbits of hyperplanes \cite{WiseSmallCanCube04}.
  \item A right-angled Artin group $R$ acts properly discontinuously and cocompactly on a CAT(0) cube complex that consists of Euclidean spaces of various dimensions, tiled by cubes, attached along affine subspaces \cite{CharneyDavis94}.  The hyperplane stabilizers are themselves right-angled Artin groups.
  \item Farley proved that Thompson's group $V$ acts properly discontinuously on a CAT(0) cube complex with two orbits of hyperplanes, one of which consists of trivially stabilized hyperplanes.  Hence $V$ acts on a quasi-tree $\Gamma$.  More generally, Farley gave an action on a CAT(0) cube complex for \emph{diagram groups} associated to based semigroup presentations \cite{Farley2003},\cite{Farley2005}.
  \item Finitely generated Coxeter groups act properly on CAT(0) cube complexes with finitely many orbits of hyperplanes \cite{NibloReeves03}.
  \item Artin groups of type FC act on finite dimensional CAT(0) cube complexes with 0-cube stabilizers of finite type~\cite{CharneyDavis95b}.
\end{enumerate}
\end{exmp}

The next example shows that there are non-cubulated quasi-arboreal groups.

\begin{exmp}\label{exmp:notcubulated}
Let $G\cong N\rtimes F$ where $F$ is a finitely-generated free group and $N$ is a finitely-generated group.  Let $\Gamma$ be the graph whose vertices are distinct cosets of $N$ and whose edges correspond to left-multiplication by generators of $G/N\cong F$.  Then $G$ acts on $\Gamma$ in such a way that the vertex-stabilizers are all $N$ and the set of $G$-orbits of edges generates $F$.  In fact, $\Gamma$ is a Cayley graph for $F$ and is thus a tree.  $\Gamma$ is also a generalized coset graph showing that $G$ is quasi-arboreal relative to $N$.

$N$ and $F$ may be chosen in such a way that $G$ does not act properly on a CAT(0) cube complex.  For instance, let $G$ be the Baumslag-Solitar group with presentation $\langle a,b\mid (a^m)^b=a^n\rangle$.  Then $G$ is weakly hyperbolic relative to $\langle a\rangle$ with generalized coset graph a subdivided line.  However, a theorem of Haglund in \cite{HaglundSemisimple} implies that $G$ is not cubulated when $m\neq n$.
\end{exmp}

\begin{defn}
Let $G$ be a finitely generated group and $\mathcal G$ its Cayley graph with respect to some finite generating set.  A subgroup $H\leq G$ is a \emph{codimension-1 subgroup} if there exists $r\geq 0$ such that $\mathcal G- N_r(H)$ has two components, neither of which lies in $N_s(H)$ for any $s\geq 0$.
\end{defn}

One verifies that, given an action of $G$ on a CAT(0) cube complex, the hyperplane-stabilizers are codimension-1 subgroups.  Conversely, Sageev's construction yields an action of $G$ on a CAT(0) cube complex in the presence of a codimension-1 subgroup.  A ready class of examples of groups without codimension-1 subgroups is that of groups having Kazhdan's Property~(T)~\cite{NibloRoller98}, and the following example shows that quasi-arboreality does not imply the existence of a codimension-1 subgroup.

\begin{exmp}\label{exmp:slnz}
Consider the Steinberg presentation for $SL_n(\integers)$, with $n\geq 3$, where the generator $a_{ij}$ represents the $n\times n$ matrix with diagonal entries equal to 1, the $ij$-entry equal to 1, and 0 elsewhere:
\begin{equation*}
    SL_n(\integers)\cong\big\langle a_{ij},\,1\leq i\neq j\leq n\,\mid\,[a_{ij},a_{kl}],i\neq k, j\neq l;\,[a_{ij},a_{jk}]a_{ik}^{-1},i\neq k;\,(a_{12}a_{21}a_{12}^{-1})^4\big\rangle.
\end{equation*}
Let $A_{ij}=\langle a_{ij}\rangle$ and denote by $\Gamma$ the coned-off Cayley graph of the pair $\left(SL_n(\integers),\{A_{ij}\}\right)$.  A theorem of Carter and Keller implies that $SL_n(\integers)$ is boundedly generated with respect to $\{A_{ij}\}$~\cite{CarterKeller}.  The graph $\Gamma$ is therefore bounded, and hence $SL_n(\integers)$ is quasi-arboreal relative to $\{A_{ij}\}$.  On the other hand, $SL_n(\integers)$ has Property~(T)~\cite{HarpeValette89} and thus contains no codimension-1 subgroups.
\end{exmp}

\section{Asymptotic dimension}\label{sec:asdim}

\subsection{Asymptotic dimension of cube complexes}
In this section, we discuss the asymptotic dimension of groups acting on CAT(0) cube complexes and relate this to quasi-arboreality.

\begin{defn}[Asymptotic dimension, \cite{BellDranishnikov}]\label{defn:asymdim}
Let $(M, d)$ be a metric space.  The \emph{asymptotic dimension} of $M$ is at most $n$ if for each $r>0$ there exists a covering $M=\cup_{i\in I}U_i$ such that the sets $U_i$ are uniformly bounded and no more than $n+1$ elements of $\{U_i\}_{i\in I}$ intersect any ball of radius $r$.

If $\asdim M\leq n$ and $\asdim M\not\leq n-1$, then we say $\asdim M=n$.  If no such $n$ exists, then $M$ is \emph{asymptotically infinite-dimensional}.
\end{defn}

The asymptotic dimension of a metric space is a quasi-isometry invariant and is thus well-defined for finitely-generated groups.  Word-hyperbolic groups have finite asymptotic dimension \cite{Gromov93}, but whether this is true of all CAT(0) groups is unknown.

Other examples of groups with finite asymptotic dimension are those that split as finite graphs of groups whose vertex-groups have finite asymptotic dimension \cite{BellDranishnikov01} and groups that are hyperbolic relative to a finite collection of asymptotically finite-dimensional groups \cite{Osin2005}.  Theorem~\ref{thm:cubecomplexasdim} states that a finite-dimensional CAT(0) cube complex is asymptotically finite-dimensional, and implies that any cubulated group is asymptotically finite-dimensional.  More generally, Corollary~\ref{cor:asdimvertstab} gives conditions under which the hypothesis of properness of the action can be relaxed.  The following fundamental result was proven by Wright in~\cite{Wright2010}:

\begin{thm}\label{thm:cubecomplexasdim}
Let $\widetilde X$ be a CAT(0) cube complex.  Then $\asdim\widetilde X\leq\dimension\widetilde X$.
\end{thm}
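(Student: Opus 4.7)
The plan is to induct on $D=\dimension\widetilde X$, using Theorem~\ref{thm:HGisQT} (so the contact graph $\Gamma$ satisfies $\asdim\Gamma\leq 1$) together with the Hurewicz-type theorem of Bell and Dranishnikov~\cite{BellDranishnikov06}. For the base case $D=0$, a connected $0$-dimensional CAT(0) cube complex is a single $0$-cube, so $\asdim\widetilde X=0$.

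\textbf{Inductive step.} Assume the inequality for all CAT(0) cube complexes of dimension strictly less than $D$. I would define a map $\phi:\widetilde X^{(0)}\rightarrow\Gamma^{(0)}$ by choosing, for each $0$-cube $v$, a hyperplane $W_v$ with $v\in N(W_v)$. If $v,v'$ are joined by a $1$-cube dual to a hyperplane $H$, then $v,v'\in N(H)$, and since $v\in N(W_v)\cap N(H)$ and $v'\in N(W_{v'})\cap N(H)$, Lemma~\ref{lem:HellysTheorem} gives $W_v\coll H\coll W_{v'}$ in $\Gamma$, so $d_{\Gamma}(\phi(v),\phi(v'))\leq 2$; hence $\phi$ extends to a $2$-Lipschitz map $\widetilde X^{(1)}\rightarrow\Gamma$. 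For each hyperplane $W$, $\phi^{-1}(W)\subseteq N(W)\cong W\times[-\tfrac{1}{2},\tfrac{1}{2}]$, which is quasi-isometric to the CAT(0) cube complex $W$ of dimension at most $D-1$; by induction $\asdim\phi^{-1}(W)\leq\asdim N(W)\leq D-1$.

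\textbf{Hurewicz theorem and main obstacle.} The Bell--Dranishnikov theorem asserts that if $f:X\rightarrow Y$ is Lipschitz and the family $\{f^{-1}(Z)\,:\,Z\subseteq Y,\,\diam Z\leq r\}$ has asymptotic dimension uniformly bounded by $n$ for every $r$, then $\asdim X\leq\asdim Y+n$. Applied to $\phi$, this would give $\asdim\widetilde X\leq 1+(D-1)=D$, provided the uniformity hypothesis is verified. The main obstacle is exactly this verification: the preimage of an $r$-ball in $\Gamma$ lies in a union $\bigcup_{W'\in B_r(W)}N(W')$, and one needs the asymptotic dimension of such unions to be bounded by $D-1$ uniformly in $W$ and $r$. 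When $\widetilde X$ is locally finite, $B_r(W)$ has uniformly bounded cardinality and Bell--Dranishnikov's finite union theorem suffices; the general case would require a more careful argument controlling how contacting hyperplanes interact in $\widetilde X$, relying on the disc diagram machinery of Section~\ref{sec:sphere} to produce covers of fixed combinatorial type that are uniform across the family.
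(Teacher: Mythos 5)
The paper does not prove Theorem~\ref{thm:cubecomplexasdim}; it states the result and cites it to Wright~\cite{Wright2010}, so there is no in-paper argument against which to compare your proposal. Evaluating your attempt on its own merits, the plan of pushing forward along the map $\widetilde X^{(0)}\rightarrow\Gamma$ and invoking Bell--Dranishnikov's Hurewicz-type theorem has a fatal obstruction, not merely the verification gap you flag.

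The problem is that the contact graph is typically far too coarse for this fibering to help. Take $\widetilde X=\reals^{D}$ with its standard cubulation, $D\geq 2$. By Example~\ref{exmp:hwgraphexmp}, $\Gamma$ is the join of $D$ paths, hence has diameter $2$. So for $r\geq 2$ (indeed regardless of the choice of the assignment $v\mapsto W_v$), every ball $B_r(W)\subseteq\Gamma$ is all of $\Gamma$, and $\phi^{-1}(B_r(W))=\widetilde X^{(0)}$, which has $\asdim=D$, not $\leq D-1$. Thus the uniformity hypothesis of the Hurewicz-type theorem fails with $n=D-1$, and your intended inequality $\asdim\widetilde X\leq\asdim\Gamma+(D-1)$ cannot be extracted this way. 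Your remark that local finiteness of $\widetilde X$ gives uniformly bounded cardinality of $B_r(W)$ is also incorrect: in $\reals^2$, a vertical hyperplane is crossed by every horizontal hyperplane, so already $B_1(W)$ is infinite. More structurally, whenever $\widetilde X$ contains a convex $\reals^k$-like region ($k\geq 2$), the quasi-tree $\Gamma$ collapses that region to bounded diameter, so the fibers of any Lipschitz map to $\Gamma$ must absorb the full dimension there. A correct proof of this theorem needs a genuinely different decomposition of $\widetilde X$ (Wright's argument does not go through the contact graph).
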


Wright also observes that a finitely-generated group acting properly on a CAT(0) cube complex of dimension $D$ has asymptotic dimension at most $D$.  The main result of this section, Corollary~\ref{cor:asdimvertstab}, is a strengthening of Wright's result that also generalizes the theorem of Bell and Dranishnikov about graphs of asymptotically finite-dimensional groups.

Osin draws a striking contrast between relatively hyperbolic and weakly hyperbolic groups by giving examples of groups that are weakly hyperbolic relative to a finite collection of infinite cyclic subgroups but that contain free abelian groups of arbitrarily large rank and therefore have infinite asymptotic dimension.  Osin's groups are also quasi-arboreal relative to that collection of cyclic subgroups: the coset graph is bounded~\cite{Osin2005}.  On the other hand, these examples contain any recursively presentable group, and in particular have, for instance, subgroups with Property (T), and thus do not admit proper essential actions on CAT(0) cube complexes, by an application of a result in \cite{NibloRoller98}.



We now prove that the hypothesis of properness of the action of $G$ on $\widetilde X$ can be relaxed in Wright's result; one requires only uniform boundedness of the asymptotic dimension of 0-cube stabilizers.

\begin{cor}\label{cor:asdimvertstab}
Let $G$ be a finitely generated group acting on the locally finite CAT(0) cube complex $\widetilde X$, with $\dimension\widetilde X=D<\infty$.  Suppose there exists $n\in\naturals$ such that for each 0-cube $x$, the stabilizer $G_x$ satisfies $\asdim G_x\leq n$.  Then $\asdim G\leq n+D$.
\end{cor}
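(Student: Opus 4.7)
The plan is to apply the Hurewicz-type theorem of Bell and Dranishnikov~\cite{BellDranishnikov06} to the orbit map from $G$ into $\widetilde X^{(1)}$, whose asymptotic dimension is already controlled by Wright's theorem. Fix a base 0-cube $x_0 \in \widetilde X$ and a finite generating set $S$ of $G$, and set $\phi:G \to \widetilde X^{(1)}$ by $\phi(g) = gx_0$. Because each $s \in S$ moves $x_0$ by at most $L := \max_{s \in S} d_{\widetilde X}(sx_0, x_0)$, the map $\phi$ is $L$-Lipschitz from the Cayley graph of $G$ (with the chosen generating set) to $\widetilde X^{(1)}$. Theorem~\ref{thm:cubecomplexasdim} gives $\asdim \widetilde X^{(1)} \leq D$, so the Hurewicz-type theorem will deliver $\asdim G \leq D + n$ as soon as the family $\mathcal F = \{\phi^{-1}(B_R(y)) : y \in \widetilde X^{(1)}\}$ has asymptotic dimension at most $n$ uniformly for every $R$.

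To verify this, I first reduce to a single preimage. If $\phi^{-1}(B_R(y))$ is nonempty, select $g_0 \in \phi^{-1}(B_R(y))$; then every other $g \in \phi^{-1}(B_R(y))$ satisfies $d_{\widetilde X}(g_0^{-1}g \cdot x_0, x_0) \leq 2R$, so
\begin{equation*}
\phi^{-1}(B_R(y)) \subseteq g_0 \cdot \phi^{-1}(B_{2R}(x_0)).
\end{equation*}
Since left-translation is an isometry of the Cayley graph, any asymptotic-dimension cover of $\phi^{-1}(B_{2R}(x_0))$ translates into one of $\phi^{-1}(B_R(y))$ with identical constants. It therefore suffices to construct, for each $R$, a cover of $\phi^{-1}(B_{2R}(x_0))$ of multiplicity $n+1$ and of diameter depending only on $R$.

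Local finiteness of $\widetilde X$ forces $B_{2R}(x_0) \cap \widetilde X^{(0)}$ to be finite; let $x_1, \ldots, x_{N(R)}$ enumerate those 0-cubes lying in $G\cdot x_0$, and pick $g_i \in G$ with $g_i x_0 = x_i$. Then
\begin{equation*}
\phi^{-1}(B_{2R}(x_0)) = \bigsqcup_{i=1}^{N(R)} g_i G_{x_0}
\end{equation*}
is a disjoint union of at most $N(R)$ left cosets of the stabilizer $G_{x_0}$. Each coset is the image of $G_{x_0}$ under a left-translation isometry of the Cayley graph, and so carries the subspace metric inherited from $G_{x_0}$ itself. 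Applying the finite union theorem for asymptotic dimension to these $N(R)$ cosets then yields the desired multiplicity-$(n+1)$ cover with constants depending only on $R$, thereby securing the uniformity hypothesis for the Hurewicz-type theorem.

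The main obstacle I anticipate is ensuring that the hypothesis ``$\asdim G_x \leq n$'' genuinely delivers a \emph{uniform} asymptotic-dimension bound on each coset $g_i G_{x_0}$ viewed as a subset of the Cayley graph of $G$, particularly when $G_{x_0}$ may be distorted in $G$. This is where the disc-diagram techniques of Section~\ref{sec:prelim} should enter: given $h,h' \in G_{x_0}$ at Cayley-graph distance $\le r$, a minimal-complexity disc diagram in $\widetilde X$ bounded by the corresponding loop through $x_0$ can be used to rewrite the word as one of uniformly bounded length whose prefixes remain close to $G_{x_0}$, so that the subspace and intrinsic large-scale geometries of $G_{x_0}$ remain coarsely comparable at the scales needed by Bell--Dranishnikov. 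Once this coset-uniformity is in place, the Hurewicz-type theorem combined with Theorem~\ref{thm:cubecomplexasdim} immediately yields $\asdim G \leq n + D$.
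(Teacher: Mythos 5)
Your proof uses the same skeleton as the paper's (Lipschitz orbit map, Wright's bound on $\asdim\widetilde X$, the Bell--Dranishnikov Hurewicz-type theorem, and the finite union theorem), but your treatment of the key step is genuinely cleaner. The paper splits the $R'$-quasistabilizer $W_{R'}(x_o)$ into ``hyperbolic'' and ``elliptic'' elements and runs a disc-diagram exchange argument to show that an elliptic element of the quasistabilizer already fixes a 0-cube within distance $R'$ of $x_o$, before concluding that the quasistabilizer is covered by finitely many cosets of 0-cube stabilizers. Your observation that
\[
\phi^{-1}\bigl(B_{2R}(x_0)\bigr)\;=\;\bigsqcup_{i=1}^{N(R)} g_i\,G_{x_0}
\]
--- valid because $gx_0$ lands on one of the $N(R)$ orbit points inside $B_{2R}(x_0)$ and the fiber over each such point is exactly the coset $g_iG_{x_0}$ --- handles every element at once and bypasses the disc-diagram digression entirely. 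Each coset is a left translate of $G_{x_0}$, hence isometric to it in the subspace metric, so the finite union theorem applies with constants depending only on $N(R)$, and your left-translation reduction to a ball at $x_0$ supplies the uniformity in $y$ that the Hurewicz-type theorem requires.

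Your closing paragraph, however, heads down a false trail. The concern about distortion of $G_{x_0}$ is worth flagging, but the fix you sketch --- using a minimal-complexity disc diagram to rewrite a word of bounded $d_G$-length as one staying near $G_{x_0}$ --- would in effect assert that $G_{x_0}$ is undistorted in $G$, which need not hold and is not needed. The correct resolution is elementary: for any finitely generated subgroup $H$ of a finitely generated group $G$, one has $\asdim(H,d_G|_H)\leq\asdim(H,d_H)$, since the identity $(H,d_H)\to(H,d_G|_H)$ is Lipschitz and, by left-invariance and finiteness of balls, any family that is $r'$-disjoint for $d_H$ (with $r'$ chosen so that $B_{d_G}(e,r)\cap H\subseteq B_{d_H}(e,r')$) is automatically $r$-disjoint for $d_G|_H$. (If one instead reads the hypothesis $\asdim G_x\leq n$ as referring to the subspace metric inherited from $G$ --- the natural reading when $G_x$ is not assumed finitely generated --- then nothing further is needed.) With that standard fact in place of the disc-diagram appeal, your argument closes.
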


\begin{proof}
As usual, we will use the graph-metric on the 1-skeleton.  In particular, if $x$ is a 0-cube and $R\geq 0$, then $B_{\widetilde X}(x,R)$ denotes the smallest subcomplex of $\widetilde X$ containing all 0-cubes $y$ with $d_{\widetilde X}(x,y)\leq R$.

Let $x_o$ be a 0-cube of $\widetilde X$ and let $\psi:G\rightarrow\widetilde X^{(1)}$ be $\psi(g)=gx_o$.  This $\psi$ is a Lipschitz map with respect to the word metric on $G$ and the graph metric: the Lipschitz constant is $\max\{d_{\widetilde X}(x_o,sx_o):s\in\mathcal S\}$, where $\mathcal S$ is the finite generating set.  Let $R\geq 0$ and let $x=gx_o$ for some $g\in G$.  Then $B_{\widetilde X}(x,R)=\{gy_1,gy_2,\ldots,y_b\}$, where $y_1,\ldots,y_b$ are the finitely many 0-cubes at distance at most $R$ from $x_o$ ($b<\infty$ since $\widetilde X$ is locally finite).  The preimage of $B_{\widetilde X}(x,R)$ is therefore equal to $g\left(\cup_{i=1}^b\psi^{-1}(\{y_i\})\right)$.  Now, if $y_i\not\in Gx_o$, then $\psi^{-1}(\{y_i\})=\emptyset$.  Otherwise, $y_i=g_ix_o$ for some $g_i\in G$, whence $\psi^{-1}(y_i)=g_iG_{x_o}$.  Hence the preimage of $B_{\widetilde X}(x,R)$ is equal to $\cup_{i=1}^b(gg_i)G_{x_o}$, which is a finite union of cosets of $G$ for which $b$ depends on $R$ but not on $x$.  Therefore $\{\psi^{-1}(B(gx_o,R))\mid g\in G\}$ uniformly has asymptotic dimension bounded by $\asdim G_{x_o}$, which is at most $n$ by hypothesis.

Therefore, by the Hurewicz-type theorem~\cite{BellDranishnikov06}, $\asdim G\leq n+\asdim\psi(G)$.  By Theorem~\ref{thm:cubecomplexasdim}, $\asdim\widetilde X\leq D$.  On the other hand, $\asdim\psi(G)\leq\asdim\widetilde X$, and thus $\asdim G\leq n+D$, as required.

\end{proof}

\section{Hyperbolic cube complexes and complete bipartite subgraphs of $\Gamma$}\label{sec:planarbipartiteII}
The aim of this section is to characterize non-$\delta$-hyperbolic CAT(0) cube complexes in terms of the existence of certain complete bipartite subgraphs of their crossing- and contact graphs.  This leads, in a sense, to a combinatorial version of the ``flat plane theorem'' for cubulated groups.  Similar results are proved in~\cite{CDEHV}, from the point of view of median spaces.

Throughout this discussion, $\widetilde X$ is a CAT(0) cube complex with contact graph $\Gamma$ and crossing graph $\Delta$.

\subsection{Flat plane theorem}\label{subsec:flatplane}
\begin{defn}[Thin bicliques]
The graph $\Gamma$ has \emph{thin bicliques} if there exists $n\in\naturals$ such that any complete bipartite subgraph $K_{p,q}\subseteq\Gamma$ satisfies $p<n$ or $q<n$.
\end{defn}

The primary result is Theorem~\ref{thm:flatplaneonlyif}.  We use the following version of the axiom of choice.

\begin{lem}[K\"{o}nig's lemma]\label{lem:konig}
Let $\Lambda$ be a locally finite connected graph with infinitely many vertices and let $R$ be a subdivided ray.  Then for each vertex $v$, there is an embedding $R\hookrightarrow\Lambda$ containing $v$.
\end{lem}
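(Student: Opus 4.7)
The plan is to produce a ray starting at $v$ by a greedy inductive construction, maintaining throughout the construction the invariant that the next vertex to be chosen lies in an infinite connected subgraph. Because any ray with $v$ as its endpoint gives an embedding of $R$ containing $v$, it suffices to construct such a ray; the endpoint of $R$ will be sent to $v$.

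First I would set up the invariant. Having chosen distinct vertices $v = u_0, u_1, \ldots, u_n$ that form a path in $\Lambda$, let $\Lambda_n$ denote the subgraph of $\Lambda$ obtained by deleting $u_0, \ldots, u_{n-1}$ (and all incident edges). The invariant is that $u_n$ lies in a connected component $C_n$ of $\Lambda_n$ that contains infinitely many vertices. The base case $n=0$ holds because $\Lambda_0 = \Lambda$, which is connected and infinite, so $C_0 = \Lambda$.

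For the inductive step, I would examine $C_n \setminus \{u_n\}$. Since $\Lambda$ is locally finite, $u_n$ has finitely many neighbors, and hence $C_n \setminus \{u_n\}$ has at most $\valence(u_n)$ connected components (each must contain a neighbor of $u_n$ in order to connect back through $u_n$ in $C_n$). Because $C_n$ is infinite and decomposes as the union of $\{u_n\}$ together with these finitely many components, at least one component $C_{n+1}$ must itself be infinite. Choosing any neighbor $u_{n+1}$ of $u_n$ lying in $C_{n+1}$ then extends the path and preserves the invariant: the vertex $u_{n+1}$ has not appeared among $u_0, \ldots, u_n$ because it lies in $\Lambda_{n+1}$, from which those vertices have been deleted.

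Iterating yields an infinite sequence $u_0, u_1, u_2, \ldots$ of distinct vertices with $u_i$ adjacent to $u_{i+1}$ for every $i$, which is exactly an embedding of the subdivided ray $R$ into $\Lambda$ with $v$ as the image of the endpoint. The only step that requires any care is the observation that $C_n \setminus \{u_n\}$ has only finitely many components; this is where local finiteness is essential, and I expect it to be the only substantive point in an otherwise routine argument.
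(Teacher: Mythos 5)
The paper states this lemma without proof, citing it as a classical form of K\"onig's lemma (``We use the following version of the axiom of choice''), so there is no in-paper argument to compare against. Your proof is the standard, correct argument: maintain the invariant that the current endpoint lies in an infinite component of the graph minus the already-used vertices, and use local finiteness to see that deleting a vertex can split an infinite component into only finitely many pieces, one of which must remain infinite. One small point worth making explicit: the infinite component $C_{n+1}$ you find inside $C_n \setminus \{u_n\}$ is in fact a component of $\Lambda_{n+1}$ (not just of $C_n \setminus \{u_n\}$) because $C_n$ was already a component of $\Lambda_n$, so no vertex of $C_n$ has a neighbor in $\Lambda_n \setminus C_n$; this keeps the stated invariant intact. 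With that observation the induction closes cleanly, and the argument is exactly what one would expect.
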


\begin{thm}\label{thm:flatplaneonlyif}
Let $G$ be a group acting properly and cocompactly on the CAT(0) cube complex $\widetilde X$.
\begin{enumerate}
\item $G$ is word-hyperbolic if and only if the crossing graph $\Delta$ has thin bicliques.
\item If $G$ is not word-hyperbolic, then $\Delta$ contains the complete bipartite graph $K_{\infty,\infty}$.
\end{enumerate}
\end{thm}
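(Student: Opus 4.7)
The plan is to prove (2) together with the easy observation that arbitrarily large bicliques in $\Delta$ preclude word-hyperbolicity; both directions of (1) follow. Cocompactness, properness, and $\dimension\widetilde X<\infty$ make $\widetilde X$ locally finite, $G$ quasi-isometric to $\widetilde X^{(1)}$, and the CAT(0) and combinatorial metrics on $\widetilde X$ quasi-isometric (by \cite{CapraceSageev}); hence $G$ is word-hyperbolic iff $(\widetilde X^{(1)}, d_{\widetilde X})$ is Gromov-hyperbolic. For the easy direction, suppose $\{V_i\}_{i=1}^n,\{W_j\}_{j=1}^n$ witness $K_{n,n}\subseteq\Delta$. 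Picking $0$-cubes $p_{ij}\in N(V_i)\cap N(W_j)$, the convex hull of $\{p_{ij}\}$ is a convex subcomplex of $\widetilde X$ that, by Helly's theorem (Lemma~\ref{lem:HellysTheorem}) and convexity of hyperplane carriers, isometrically contains an $n\times n$ planar grid whose horizontal hyperplanes are the $V_i$ and vertical hyperplanes are the $W_j$. Two opposite-corner ``L-shaped'' geodesics of length $2n$ in this grid diverge to distance $n$ at their midpoints, so hyperbolicity of $\widetilde X^{(1)}$ is incompatible with arbitrarily large $n$.

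For the hard direction, assume $G$ is not hyperbolic; then for each $N$ there exist combinatorial geodesics $\alpha_N,\beta_N$ with common endpoints whose midpoints lie at distance $\geq N$ in $\widetilde X^{(1)}$. Let $H_1,\dots,H_m$ be the hyperplanes separating the endpoints, ordered by appearance along $\alpha_N$, and let $\sigma_N$ be the permutation encoding the order of appearance along $\beta_N$. Two hyperplanes $H_i$ and $H_j$ cross if and only if $\{i,j\}$ is an inversion of $\sigma_N$: any two geodesics between the same $0$-cubes differ by a sequence of square moves, each of which transposes a pair of crossing hyperplanes. A direct count shows the distance between the midpoints equals $2\cdot|\{i\leq m/2:\sigma_N^{-1}(i)>m/2\}|$, so the hypothesis yields $\geq N/2$ such indices. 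Applying Erd\H{o}s--Szekeres to the map $i\mapsto\sigma_N^{-1}(i)$ restricted to these indices produces a decreasing subsequence of length $\geq\lceil\sqrt{N/2}\,\rceil$, indexing a clique of that size in $\Delta$; splitting the clique in half yields $K_{n(N),n(N)}\subseteq\Delta$ with $n(N)\to\infty$.

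To upgrade to a single $K_{\infty,\infty}$ I use the $G$-action. Translate each clique by an element of $G$ so that a distinguished hyperplane has carrier meeting a fixed compact region of $\widetilde X$; local finiteness restricts this to finitely many hyperplanes, so a subsequence fixes it as some $H^1\in\mathcal W$. Iteratively, with pairwise-crossing $H^1,\dots,H^k$ already fixed as a common initial segment of the cliques, the $(k+1)$-st element must cross $H^1$ and hence has carrier in a bounded region of $\widetilde X$; local finiteness gives finitely many possibilities, and a further subsequence fixes $H^{k+1}$. Lemma~\ref{lem:konig} (K\"onig's lemma) applied to the resulting locally finite infinite tree of consistent refinements produces an infinite branch, hence an infinite pairwise-crossing family $\{H^k\}_{k\geq 1}$, any infinite bipartition of which provides the desired $K_{\infty,\infty}\subseteq\Delta$. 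The main technical obstacle is the previous step: converting the metric midpoint-thickness of $\alpha_N,\beta_N$ into a long \emph{chain} of pairwise-crossing hyperplanes, rather than merely many unrelated crossings, requires the Erd\H{o}s--Szekeres-type extraction, and this extraction is the combinatorial heart of the cubical flat plane theorem.
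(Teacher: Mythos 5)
Your plan for part (1) of the theorem (via the two implications) and the use of K\"onig's lemma for part (2) match the paper's overall scaffolding, but the two substantive steps both contain a fatal confusion between \emph{cliques} and \emph{bicliques} in $\Delta$, and the central extraction step is wrong.

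\textbf{The Erd\H{o}s--Szekeres step fails.} You are correct that an inversion of $\sigma_N$ forces a crossing (though your ``if and only if'' is an overstatement: crossing hyperplanes can occur in the same order on both geodesics). But a decreasing subsequence of $\sigma_N^{-1}$ restricted to $S$ yields a set of \emph{pairwise-crossing} hyperplanes, i.e.\ a clique in $\Delta$, and cliques of size $k$ span $k$-cubes. Since $G$ acts properly and cocompactly, $\widetilde X$ is finite-dimensional, so cliques in $\Delta$ have size $\leq\dimension\widetilde X$. Your argument would therefore prove that every cocompactly cubulated group is word-hyperbolic, which is false. Test it on $\mathbb{Z}^2$ acting on the standard cubulation of $\mathbb{R}^2$: take $\alpha_N$ the L-shaped geodesic $(0,0)\to(N,0)\to(N,N)$ and $\beta_N$ the reflected L-shape. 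Then $S=\{1,\dots,N\}$ and $i\mapsto\sigma_N^{-1}(i)=N+i$ is strictly increasing, so the longest decreasing subsequence has length $1$. Erd\H{o}s--Szekeres then produces a long \emph{increasing} subsequence, which conveys no crossing information whatsoever. The biclique structure is simply not visible in the one-dimensional order statistics of a single pair of geodesics; the paper (Lemma~\ref{lem:thinbicliquesimplieshyperbolic}) obtains it by looking at a minimal-area disc diagram over a fat geodesic triangle and observing that the dual curves separating a far point $x$ from the two opposite sides form two families $\mathbb V,\mathbb H$ with \emph{each} member of $\mathbb V$ crossing \emph{each} member of $\mathbb H$ (because they cannot end on the same side of the triangle). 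That ``every horizontal crosses every vertical'' structure is the biclique, and it comes from the two-dimensional geometry of the diagram, not from a permutation.

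\textbf{The $K_{\infty,\infty}$ upgrade has the same confusion.} You propose to extract an infinite \emph{pairwise-crossing} family $\{H^k\}$; that is again an infinite clique, hence an $\infty$-cube, impossible in a finite-dimensional complex. The paper's K\"onig argument operates at the level of the bicliques themselves: it builds an auxiliary locally finite graph $\Lambda$ whose vertices are the (finitely many $G$-orbits, by cocompactness, of) subgraphs $K_{n,n}\subseteq\Delta$, with $K_{n,n}$ adjacent to $K_{n+1,n+1}$ when the former is contained in the latter, and applies K\"onig to $\Lambda$ to obtain a nested chain whose union is $K_{\infty,\infty}$.

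\textbf{The ``easy direction'' is underjustified but salvageable.} Your claim that the convex hull of $\{p_{ij}\}$ contains an $n\times n$ planar grid with the $V_i$ as horizontal hyperplanes is false as stated: if the $V_i$ pairwise cross, the hull is a high-dimensional cube, not a grid, and ``horizontal hyperplane'' doesn't make sense. The paper handles this carefully in Lemma~\ref{lem:bipartitesubgraphimpliesplanargrid} by first replacing each class with an inseparable family, identifying the resulting hyperplanes-in-$V_0$ as a tree (Claim~I), and then extracting a long geodesic from that tree using the degree bound (Claim~II); one only obtains a grid of side length roughly $\log R$ from a biclique of side $R$, but that is enough to defeat the linear isoperimetric inequality. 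A Ramsey-type dichotomy (either many pairwise-crossing, giving a large cube, or many pairwise-non-crossing, giving a genuine grid) would also work, but some such step is needed.
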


We postpone the proof of Theorem~\ref{thm:flatplaneonlyif} until after that of Theorem~\ref{thm:thinbicliqueshyperbolic}, on which it depends, and also note that since $\Delta\subset\Gamma$, the complex $\widetilde X$ is hyperbolic if $\Gamma$ has thin bicliques.

\subsection{Hyperbolic CAT(0) cube complexes}
As usual, the graph $\widetilde X^{(1)}$, with metric $d_{\widetilde X}$, is \emph{$\delta$-hyperbolic} if for every geodesic triangle $\alpha_1\alpha_2\alpha_3\rightarrow\widetilde X^{(1)}$, each $\alpha_i$ lies in the $\delta$-neighborhood of the union of the other two segments.  The space $\widetilde X$ with the CAT(0) piecewise-Euclidean metric is $\delta'$-hyperbolic under the analogous condition on geodesic triangles.

The following lemma collects basic facts about hyperbolicity of cube complexes and the thin bicliques property of crossing graphs.

\begin{lem}\label{lem:dimensionhyper}
For a CAT(0) cube complex $\widetilde X$ with crossing graph $\Delta$, we have:
\begin{enumerate}
\item If $\widetilde X$ is finite-dimensional, then $\widetilde X$, with the usual CAT(0) metric, is hyperbolic if and only if $\widetilde X^{(1)}$ is a hyperbolic graph.
\item If $\widetilde X$ is infinite-dimensional, then it is not hyperbolic, and neither is $\widetilde X^{(1)}$.
\item If $\Delta$ has thin bicliques, then $\widetilde X$ is finite-dimensional.
\end{enumerate}
\end{lem}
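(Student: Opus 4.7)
For part (1), I would invoke directly the quasi-isometry between $\widetilde X$ with its CAT(0) metric and $\widetilde X^{(1)}$ with $d_{\widetilde X}$ established in~\cite{CapraceSageev} whenever $\dimension\widetilde X<\infty$, and then use the fact that Gromov-hyperbolicity of geodesic metric spaces is a quasi-isometry invariant. This gives the biconditional in one line.

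For part (2), I would fix an arbitrary $d\geq 1$ and, using $\dimension\widetilde X=\infty$, choose a $d$-cube $c\subset\widetilde X$. Every cube is a convex subcomplex, being a finite intersection of hyperplane carriers (each of which is convex in $\widetilde X^{(1)}$ by the discussion following the gating remark in Section~\ref{sec:prelim}), so $c$ is isometrically embedded in $\widetilde X^{(1)}$, and, as a Euclidean subcube, in $\widetilde X$ with the CAT(0) metric. Identifying $c^{(0)}$ with the Hamming cube $\{0,1\}^d$, I would apply Gromov's four-point condition to
\[
x=(0,\ldots,0),\quad y=(1,\ldots,1),\quad z=(1^{\lfloor d/2\rfloor},0^{\lceil d/2\rceil}),\quad w=(0^{\lceil d/2\rceil},1^{\lfloor d/2\rfloor}).
\]
In the combinatorial metric, $d_{\widetilde X}(x,y)=d_{\widetilde X}(z,w)=d$ while $d_{\widetilde X}(x,z)+d_{\widetilde X}(y,w)=d_{\widetilde X}(x,w)+d_{\widetilde X}(y,z)=d$, so any hyperbolicity constant of $\widetilde X^{(1)}$ satisfies $2d\leq d+2\delta$, forcing $\delta\geq d/2$. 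The same quadruple, viewed inside the Euclidean $d$-cube, has analogous distances whose imbalance grows like $\sqrt d$, obstructing CAT(0) hyperbolicity too. Letting $d\to\infty$ then precludes hyperbolicity of both $\widetilde X^{(1)}$ and $\widetilde X$.

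For part (3), I would argue by contrapositive. If $\dimension\widetilde X\geq 2n$ then, by the equivalent description of dimension recalled in the preliminaries, $\widetilde X$ admits a family of $2n$ pairwise-crossing hyperplanes, which span a complete subgraph $K_{2n}\subseteq\Delta$; partitioning its vertex set into two blocks of size $n$ exhibits $K_{n,n}\subseteq K_{2n}\subseteq\Delta$. Hence failure of thin bicliques is implied by $\dimension\widetilde X\geq 2n$, so thin bicliques forces $\dimension\widetilde X<2n$ and $\widetilde X$ is finite-dimensional.

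The only mildly nontrivial point is part (2): one must verify that the chosen quadruple witnesses a violation of hyperbolicity in the ambient complex rather than merely inside the $d$-cube. Convexity of cubes is precisely what delivers this, so once that observation is in hand the four-point calculation is routine, and parts (1) and (3) reduce respectively to a citation and a pigeonhole.
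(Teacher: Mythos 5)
Your plan is essentially the paper's proof, with the same reduction in part (1) (Caprace--Sageev quasi-isometry plus quasi-isometry invariance of hyperbolicity) and exactly the same clique-to-biclique pigeonhole in part (3). Part (2) differs cosmetically: the paper invokes thick geodesic triangles with corners at 0-cubes of a large cube, while you invoke Gromov's four-point condition on an explicit antipodal quadruple of the Hamming cube. These are equivalent characterizations of hyperbolicity, and your computation is correct --- for $d=2k$ the four-point inequality forces $\delta\geq k$ in $d_{\widetilde X}$ and $\delta\gtrsim\sqrt{d}$ in the CAT(0) metric --- so your version is, if anything, a bit more explicit than the one-sentence sketch in the paper.

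One small inaccuracy: cubes are \emph{not} in general ``a finite intersection of hyperplane carriers.''  For a non-maximal cube $c$ (e.g.\ a $1$-cube contained in a $2$-cube), the intersection $\bigcap_i N(H_i)$ over the hyperplanes $H_i$ dual to $c$ strictly contains $c$.  The correct justification for convexity of $c$ is direct and costs nothing: the only hyperplanes of $\widetilde X$ crossing $c$ are its $d$ dual hyperplanes, and these pairwise cross inside $c$ since $c$ is a cube, so the hypothesis of Lemma~\ref{lem:convexsubwallspace}(2) is trivially satisfied. (Alternatively, a cube is gated.)  Replace the ``intersection of carriers'' clause with this and your argument is airtight.
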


\begin{proof}
\textbf{(1)} follows from the fact that a finite-dimensional CAT(0) cube complex is quasi-isometric to its 1-skeleton.\\
To prove \textbf{(2)}, note that for any $d\geq 0$, the existence of a $d$-cube guarantees the presence of a geodesic triangle, whose corners are 0-cubes, that is not $d$-thin.  Hence $\widetilde X$ is not $d$-thin for any $d$ if $\widetilde X$ contains arbitrarily large cubes.\\
If $\Delta$ has thin bicliques, then there is an upper bound on the cardinality of cliques in $\Delta$, since the existence of a complete subgraph on $2d$ vertices implies the existence of a complete $(d,d)$ bipartite subgraph.  The dimension of $\widetilde X$ is the maximal cardinality of cliques in $\Delta$, and \textbf{(3)} follows.
\end{proof}

When using disc diagrams, it is sometimes easier to think of a $\delta$-hyperbolic space as one whose isoperimetric inequality is linear than it is to verify the thin triangle condition.  Hence we shall sometimes rely on the following version of Gromov's characterization of hyperbolic metric spaces as those having linear isoperimetric inequality.  This result is stated in cubical terms as follows.

\begin{lem}[\cite{Gromov87}]\label{lem:isoperimetric}
Let $\widetilde X$ be a CAT(0) cube complex that is $\delta$-hyperbolic with respect to its CAT(0) metric.  There exists $\lambda\geq 0$ such that for each closed combinatorial path $\sigma\rightarrow\widetilde X$, there exists a disc diagram $D\rightarrow\widetilde X$ with $\partial_pD=\sigma$ such that the area of $D$ is at most $\lambda|\sigma|$.
\end{lem}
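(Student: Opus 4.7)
Plan:

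The statement is the cubical incarnation of Gromov's classical theorem that a $\delta$-hyperbolic geodesic space has a linear isoperimetric function~\cite{Gromov87}; my plan is to reduce to a shortcut-recursion argument on combinatorial loops in the $1$-skeleton, invoking $\delta$-hyperbolicity to replace a substantial $\sigma$-arc with a shorter geodesic at the cost of filling a thin combinatorial bigon. First, by Lemma~\ref{lem:dimensionhyper}(2), $\delta$-hyperbolicity of $\widetilde X$ in the CAT(0) metric forces $\dimension\widetilde X<\infty$ (say $\dimension\widetilde X = N$), since arbitrarily large cubes would yield arbitrarily fat isometrically embedded Euclidean squares. The Caprace--Sageev quasi-isometry invoked earlier in Section~\ref{sec:asdim} then implies that $(\widetilde X^{(1)},d_{\widetilde X})$ is $\delta_0$-hyperbolic as a graph for some $\delta_0=\delta_0(\delta,N)$.

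Given the closed combinatorial path $\sigma$ of length $n$, simple connectedness of $\widetilde X$ yields a disc diagram $D\rightarrow\widetilde X$ with $\partial_pD=\sigma$, which I take to be of minimum area. Lemma~\ref{lem:minareaDD} then ensures that $D$ has no nongons, monogons, oscugons, or bigons, so distinct dual curves of $D$ inject into the set of hyperplanes of $\widetilde X$ and the total number of dual curves is at most $n/2$. I would prove $\area(D)\leq\lambda n$ by induction on $n$. In the base case $n\leq N_0$ for a suitable threshold $N_0=N_0(\delta_0)$, the bound is immediate because such a minimum-area $D$ has area at most $\binom{N_0/2}{2}$. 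For the inductive step, I invoke the following standard consequence of $\delta_0$-hyperbolicity: there exist constants $\epsilon>0$ and $c\geq 1$ depending only on $\delta_0$ such that any combinatorial loop of length $n\geq N_0$ contains vertices $p,q$ for which both $\sigma$-arcs from $p$ to $q$ have length at least $\epsilon n$, yet $d_{\widetilde X}(p,q)\leq\length{\sigma_1}-c$ for one of the arcs $\sigma_1$. Replacing $\sigma_1$ by a combinatorial geodesic $\tau$ from $p$ to $q$ yields a strictly shorter loop $\sigma'$, which bounds, by induction, a disc diagram of area at most $\lambda(n-c)$. The complementary region is a combinatorial bigon bounded by $\sigma_1$ and $\tau$, and filling this bigon by a disc diagram of area $O(\length{\sigma_1})$ completes the induction once $\lambda$ is chosen large enough relative to the bigon-filling constant.

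The main obstacle will be the bigon-filling step: showing that a bigon bounded by a combinatorial geodesic $\tau$ and a path $\sigma_1$ that $O(\delta_0)$-fellow-travels $\tau$ admits a disc diagram of area linear in $\length{\sigma_1}$. My approach would be a ladder decomposition: for each vertex $v_i$ of $\tau$, choose a combinatorial path $\gamma_i$ of length $O(\delta_0)$ to a closest vertex on $\sigma_1$, thereby cutting the bigon into rungs of perimeter $O(\delta_0)$. Each rung admits a minimum-area disc diagram of area bounded by a function of $\delta_0$ alone, since by Lemma~\ref{lem:minareaDD} its dual curves inject into the hyperplanes meeting the rung, whose count is controlled by the perimeter. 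Summing over at most $\length{\tau}\leq\length{\sigma_1}$ rungs gives the required linear bound, and gluing the inductively-obtained diagram for $\sigma'$ to the bigon filling along $\tau$ then yields a disc diagram for $\sigma$ of area at most $\lambda n$. The technical care needed in the ladder construction is ensuring that consecutive $\gamma_i$ can be chosen so that the resulting rungs are themselves bounded by short closed combinatorial paths admitting small disc diagrams -- this in turn uses the convexity of hyperplane carriers and Lemma~\ref{lem:fixedcorners}-style complexity reductions.
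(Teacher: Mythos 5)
The paper attributes this lemma to Gromov and does not prove it; it is a cubical restatement of the classical theorem that $\delta$-hyperbolic geodesic spaces have a linear isoperimetric inequality. Your proposal attempts a self-contained proof, but the induction you set up does not close, for a concrete quantitative reason.

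You shortcut a \emph{long} subarc $\sigma_1$ with $|\sigma_1|\geq\epsilon n$, and the geodesic $\tau$ replacing it satisfies only $|\tau|\leq|\sigma_1|-c$ for a \emph{constant} $c$. The new loop $\sigma'$ then has length $\leq n-c$, so the inductive hypothesis contributes $\lambda(n-c)$. But the bigon $\sigma_1\bar\tau$ that you must also fill has perimeter on the order of $|\sigma_1|\geq\epsilon n$, and your ladder decomposition fills it with area on the order of $|\sigma_1|$, i.e.\ at least $B\epsilon n$ for a constant $B>0$. The total is therefore at least $\lambda(n-c)+B\epsilon n$, which exceeds $\lambda n$ as soon as $n>\lambda c/(B\epsilon)$, no matter how large $\lambda$ is chosen. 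A savings of a fixed additive constant $c$ cannot absorb a filling cost that grows linearly in $n$. (If instead you had a \emph{multiplicative} savings $|\tau|\leq(1-c)|\sigma_1|$, the recursion would close, but that is a strictly stronger hyperbolicity-type input than what you state, and it is not the standard form.) A second, independent gap: you invoke a ladder decomposition on the grounds that $\sigma_1$ ``$O(\delta_0)$-fellow-travels $\tau$,'' but $\sigma_1$ is an arbitrary subarc of the given loop, not a quasi-geodesic, so the Morse lemma does not apply and fellow-travelling is unjustified.

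The standard remedy is to shortcut a \emph{short} subarc rather than a long one. In a $\delta_0$-hyperbolic graph, any $k$-local geodesic with $k>8\delta_0$ is a global quasi-geodesic with constants depending only on $\delta_0$; in particular a closed combinatorial loop of length $n$ exceeding a threshold $N_0(\delta_0)$ cannot be a $(8\delta_0)$-local geodesic. So some subarc of length $\leq 8\delta_0$ fails to be geodesic. Replace that subarc by a geodesic between its endpoints: the new loop $\sigma'$ has length $\leq n-1$, and the bigon cut off has perimeter $\leq 16\delta_0$ and hence, by the minimal-area bound you already noted ($\area\leq\binom{\ell/2}{2}$ for a loop of length $\ell$, since bigon-free dual curves pairwise cross at most once), has area bounded by a constant $C(\delta_0)$. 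Now $f(n)\leq f(n-1)+C(\delta_0)$ closes to $f(n)\leq C(\delta_0)\,n$ once $\lambda\geq C(\delta_0)$ is taken to also dominate the base case. The preliminary reductions you make (finite-dimensionality via Lemma~\ref{lem:dimensionhyper}, transferring $\delta$-hyperbolicity from the CAT(0) metric to $\widetilde X^{(1)}$ via the Caprace--Sageev quasi-isometry, and the bigon-freeness of minimal-area diagrams from Lemma~\ref{lem:minareaDD}) are all sound and exactly what is needed to set up the corrected recursion.
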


Actually, only the fact that the isoperimetric function of a hyperbolic metric space is subquadratic is invoked in our applications.

\subsection{Complete bipartite subgraphs of $\Delta$}\label{subsec:delta}
We first characterize hyperbolicity of the CAT(0) cube complex $\widetilde X$ in terms of complete bipartite subgraphs of the crossing graph $\Delta$.  Recall that the \emph{degree} of $\widetilde X$ is the supremum over all 0-cubes $x\in\widetilde X$ of the number of 1-cubes containing $x$.  The main result of this subsection is:

\begin{thm}\label{thm:thinbicliqueshyperbolic}
The finite-degree CAT(0) cube complex $\widetilde X$ is hyperbolic if and only if $\Delta$ has thin bicliques.
\end{thm}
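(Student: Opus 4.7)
The plan is to prove the equivalence with a disc-diagram argument in each direction, the harder being that thin bicliques suffice for hyperbolicity.

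For hyperbolic $\Rightarrow$ thin bicliques, I would argue by contrapositive. Assume $\Delta$ contains $K_{n,n}$ for arbitrarily large $n$, with parts $\{V_1,\ldots,V_n\}$ and $\{W_1,\ldots,W_n\}$. By Lemma~\ref{lem:dimensionhyper}, hyperbolicity of $\widetilde X$ forces $\dimension\widetilde X < \infty$, so pairwise-crossing sets of hyperplanes have bounded cardinality; a Ramsey-type extraction followed by selecting a long chain in the laminar family of halfspaces of non-crossing hyperplanes yields an arbitrarily large sub-biclique in which the $V_i$ form a nested chain (so $V_i$ separates $V_{i-1}$ from $V_{i+1}$) and similarly the $W_j$. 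Choose corner 0-cubes $x_{ij}\in N(V_i)\cap N(W_j)$ for $(i,j) \in \{1,n\}^2$ via iterated gate projections so as to suppress stray crossings, and form the closed rectangular path $\sigma = \alpha_1 \beta_n \alpha_n^{-1} \beta_1^{-1}$, where $\alpha_i \subset N(V_i)$ and $\beta_j \subset N(W_j)$ are combinatorial geodesics between consecutive corners. Passing to a further sub-biclique if necessary, one can ensure $|\sigma| = O(n)$. In a minimal-complexity disc diagram $D\to\widetilde X$ with $\partial_pD = \sigma$, each intermediate $V_i$ ($2 \leq i \leq n-1$) yields a dual curve with one end on $\beta_1$ and one on $\beta_n$, and each intermediate $W_j$ a dual curve from $\alpha_1$ to $\alpha_n$; a planar Jordan-curve argument inside $D$ forces each such $V$-curve to cross each such $W$-curve, producing at least $(n-2)^2$ two-cubes. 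For large $n$ this contradicts the linear isoperimetric inequality of Lemma~\ref{lem:isoperimetric}.

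For thin bicliques $\Rightarrow$ hyperbolic, I would establish a subquadratic isoperimetric inequality and then upgrade it to linear via the classical gap theorem of Gromov, Ol$'$shanskii, and Papasoglu. Let $N$ be such that $\Delta$ omits $K_{N,N}$. Then a $d$-cube realises a $K_{\lceil d/2\rceil,\lfloor d/2\rfloor}$ in $\Delta$, so $\dimension\widetilde X \leq 2N-1$, and together with the finite-degree hypothesis this gives bounded local geometry. Let $\sigma$ be a closed combinatorial path of length $L$ and $D \to \widetilde X$ a minimal-complexity disc diagram with $\partial_p D = \sigma$. Lemma~\ref{lem:minareaDD} rules out nongons, monogons, oscugons, and bigons, so each dual curve has exactly two ends on $\partial_p D$ and no two dual curves cross more than once; hence $D$ has at most $L/2$ dual curves. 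A further collapse move, exploiting the two-sidedness of hyperplanes to fuse parallel strips corresponding to dual curves sharing an image hyperplane, shows that in minimal-complexity diagrams distinct dual curves map to distinct hyperplanes of $\widetilde X$. Thus the crossings of dual curves in $D$ realise an induced subgraph of $\Delta$ on at most $L/2$ vertices; since $\Delta$ omits $K_{N,N}$, the K\H{o}v\'ari--S\'os--Tur\'an theorem bounds the number of edges by $O(L^{2-1/N})$, so $\area(D) = O(L^{2-1/N})$. The gap theorem then forces a linear isoperimetric inequality, which is equivalent to Gromov-hyperbolicity of $\widetilde X^{(1)}$.

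The main obstacle will be in the second direction. First, justifying the injectivity of the dual-curve-to-hyperplane correspondence in minimal-complexity diagrams requires a collapse move not explicitly given in Lemma~\ref{lem:minareaDD}, although it can be derived from the two-sidedness of hyperplanes and the minimality hypothesis; if this fails, the naive K\H{o}v\'ari--S\'os--Tur\'an estimate degenerates and one must instead bound $\sum_{H\coll H'} m_H m_{H'}$ under the multiplicity constraint $\sum_H m_H = L/2$, which requires a more delicate refinement. Second, the gap theorem is classically stated for Cayley complexes of finitely presented groups; applying it to a finite-degree CAT(0) cube complex without a cocompact action requires either quasi-isometrically modelling $\widetilde X$ on such a Cayley complex, or translating the Ol$'$shanskii--Papasoglu argument directly to the cubical setting, using the bounded dimension and bounded degree of $\widetilde X$.
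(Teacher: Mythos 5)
Your first direction (hyperbolic implies thin bicliques, proved contrapositively by exhibiting a quadratic isoperimetric failure from a large biclique) is in the same spirit as the paper's Lemma~\ref{lem:bipartitesubgraphimpliesplanargrid}: both extract a combinatorial rectangle whose side length tends to infinity with the biclique size and invoke Lemma~\ref{lem:isoperimetric}. Your extraction of a ``nested chain'' from the non-crossing hyperplanes via Ramsey and laminarity is a bit loose -- after Ramsey you get a pairwise non-crossing family, which need not be a chain; it corresponds to a tree in which the longest chain may only be logarithmic in the family size (this is exactly what the paper's Claim~II quantifies), and the ``iterated gate projections so as to suppress stray crossings'' is stand-in phrasing for the careful tree-embedding of the paper's Claim~I. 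But the skeleton of the argument matches the paper.

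Your second direction (thin bicliques imply hyperbolicity) is genuinely different from the paper, and it has a real gap. The paper's Lemma~\ref{lem:thinbicliquesimplieshyperbolic} argues directly from the thin-triangle definition: if some geodesic triangle $\chi\eta\nu$ is not $n$-thin at a point $x\in\chi$, the hyperplanes separating $x$ from $\eta$ and those separating $x$ from $\nu$ already form a $K_{n,n}$ in $\Delta$; the disc diagram over the triangle is used only to check that the relevant dual curves cross. Crucially, because the boundary is a concatenation of three geodesics, distinct dual curves in a minimal-area diagram automatically map to distinct hyperplanes (a single geodesic side cannot cross a hyperplane twice). Your route instead bounds the area of a minimal diagram over an \emph{arbitrary} closed path $\sigma$ and feeds a subquadratic bound into the Gromov--Ol'shanskii--Papasoglu gap theorem. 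The step you flag as needing a ``collapse move'' -- that in a minimal-area diagram distinct dual curves map to distinct hyperplanes -- is false for general boundary paths. Already in $\widetilde X=\reals^2$, take $\sigma$ to be the boundary of a $3\times2$ rectangle with the open top-middle unit square deleted; the minimal-area diagram is this region itself (area $5$, $|\sigma|=12$), and the hyperplane $\{y=3/2\}$ gives rise to two distinct dual curves. Lemma~\ref{lem:minareaDD} forbids bigons (two dual curves crossing twice), not two disjoint dual curves with the same image hyperplane, and any ``fusion'' would alter $\partial_pD$, which is fixed. Once multiplicities $m_H\geq2$ are allowed, the fallback bound $\sum_{H\bot H'}m_Hm_{H'}$ under $\sum_H m_H=L/2$ can be $\Theta(L^2)$: a $K_{N,N}$-free graph may contain a clique on $2N-2$ vertices, and concentrating multiplicity $L/(4N-4)$ on such a clique defeats the K\H{o}v\'ari--S\'os--Tur\'an estimate. (The diagram's area is of course actually linear -- that is what the theorem asserts -- but the proposed combinatorics does not show it.) So this direction would need either a quite different counting argument exploiting the string-graph structure of the arcs in $D$, or, more simply, the paper's thin-triangle route, which sidesteps the multiplicity problem altogether by working with a geodesic boundary.
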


Note that Theorem~\ref{thm:thinbicliqueshyperbolic} implies that $\widetilde X$ is hyperbolic when the contact-graph $\Gamma$ has thin bicliques.  Moreover, Lemma~\ref{lem:thinbicliquesimplieshyperbolic} does not require $\widetilde X$ to have finite degree, so any cube complex whose contact graph has thin bicliques is hyperbolic.  The proof of Theorem~\ref{thm:thinbicliqueshyperbolic} is assembled as follows from the lemmas below.

\begin{proof}[Proof of Theorem~\ref{thm:thinbicliqueshyperbolic}]
By Lemma~\ref{lem:thinbicliquesimplieshyperbolic}, $\widetilde X$ is hyperbolic when $\Delta$ has thin bicliques.  Conversely, if $\Delta$ does not have thin bicliques, then by Lemma~\ref{lem:bipartitesubgraphimpliesplanargrid}, $\widetilde X$ does not have a linear isoperimetric function and thus, by Lemma~\ref{lem:isoperimetric}, $\widetilde X$ is not $\delta$-hyperbolic for any $\delta$.
\end{proof}

In the presence of a proper, cocompact group action on $\widetilde X$, this yields the following:

\begin{proof}[Proof of Theorem~\ref{thm:flatplaneonlyif}]
$G$ is quasi-isometric to $\widetilde X$.  The first statement follows directly from Theorem~\ref{thm:thinbicliqueshyperbolic}, since $\widetilde X$ has finite degree.

If $G$ is not word-hyperbolic, then $\widetilde X$ is not word-hyperbolic, and thus $\Delta$ contains $K_{n,n}$ for all $n\geq 0$, by Theorem~\ref{thm:thinbicliqueshyperbolic}.  The proof of Lemma~\ref{lem:bipartitesubgraphimpliesplanargrid} shows that $\widetilde X$ therefore contains isometrically embedded copies of $[0,n]^2$ for any $n\geq 0$.  Note that for each $n\geq 0$, there are only finitely many $G$-orbits of such $n\times n$-planar grids in $\widetilde X$, because $\widetilde X$ is locally finite and $G$ acts cocompactly.  Indeed, each orbit of $n\times n$ planar grid is represented by one of the finitely many grids of the given dimensions that intersect a fixed compact fundamental domain for the $G$-action.

Let $\Lambda$ be the graph whose vertices correspond to $G$-orbits of isometrically embedded $n\times n$ planar grids in $\widetilde X$, for all $n\geq 0$.  The orbit represented by the $n\times n$ grid $D_n$ is adjacent in $\Lambda$ to the orbit represented by $D_{n+1}$ if and only if $D_n\subset gD_{n+1}$ for some $g\in G$, and this adjacency describes all edges in $\Lambda$.  Since $\widetilde X$ contains $D_n$ for all $n\geq 0$, the graph $\Lambda$ is infinite.  Furthermore, each vertex of $\Lambda$ is joined by a path to one of the finitely many vertices corresponding to $G$-orbits of $0\times0$ planar grids (i.e. 0-cubes).  Hence $\Lambda$ has finitely many components, at least one of which must be infinite.  Finally, $\Lambda$ is locally finite since there are finitely many orbits of planar grid of each size, and each $n\times n$ planar grid is adjacent in $\Lambda$ to vertices represented by $(n\pm1)\times(n\pm1)$ planar grids.  Hence $\Lambda$ contains an infinite ray, by K\"{o}nig's lemma, and thus there exists an infinite increasing union $D_0\subset D_1\subset\ldots$ of planar grids in $\widetilde X$.  Every hyperplane crossing $D_n$ crosses $D_m$ for $m>n$ and hence we have an increasing union
\[K_{0,0}\subset K_{1,1}\subset\ldots\subset K_{n,n}\subset K_{n+1,n+1}\subset\ldots\subset\Delta\]
of crossing graphs of the planar grids, whence $K_{\infty,\infty}\subseteq\Delta$.
\end{proof}

We now turn to the proof of Theorem~\ref{thm:thinbicliqueshyperbolic}.

\begin{defn}[Facing triple]\label{defn:communication}
The distinct hyperplanes $H_1,H_2,H_3$ form a \emph{facing triple} if any two lie in a single halfspace associated to the third.
\end{defn}

\begin{rem}[Planar grids from 4-cycles in $\Delta$]\label{rem:planargridfrom4cycle}
Let $H_0\bot V_0\bot H_1\bot V_1\bot H_0$ be an embedded 4-cycle in $\Delta$.  For $i\in\{0,1\}$, choose concatenable geodesic paths $P_i\rightarrow N(V_i),\,Q_i\rightarrow N(H_i)$ such that $A=P_0Q_0P_1Q_1$ is a closed path in $\widetilde X$.  Let $D\rightarrow\widetilde X$ be a disc diagram with boundary path $A$. Suppose that $A$ and $D$ are chosen so that $D$ has minimal complexity among all fixed carrier diagrams for the given 4-cycle in $\Delta$.  See Figure~\ref{fig:4cycle}.
\begin{figure}[h]
  \includegraphics[width=2.5in]{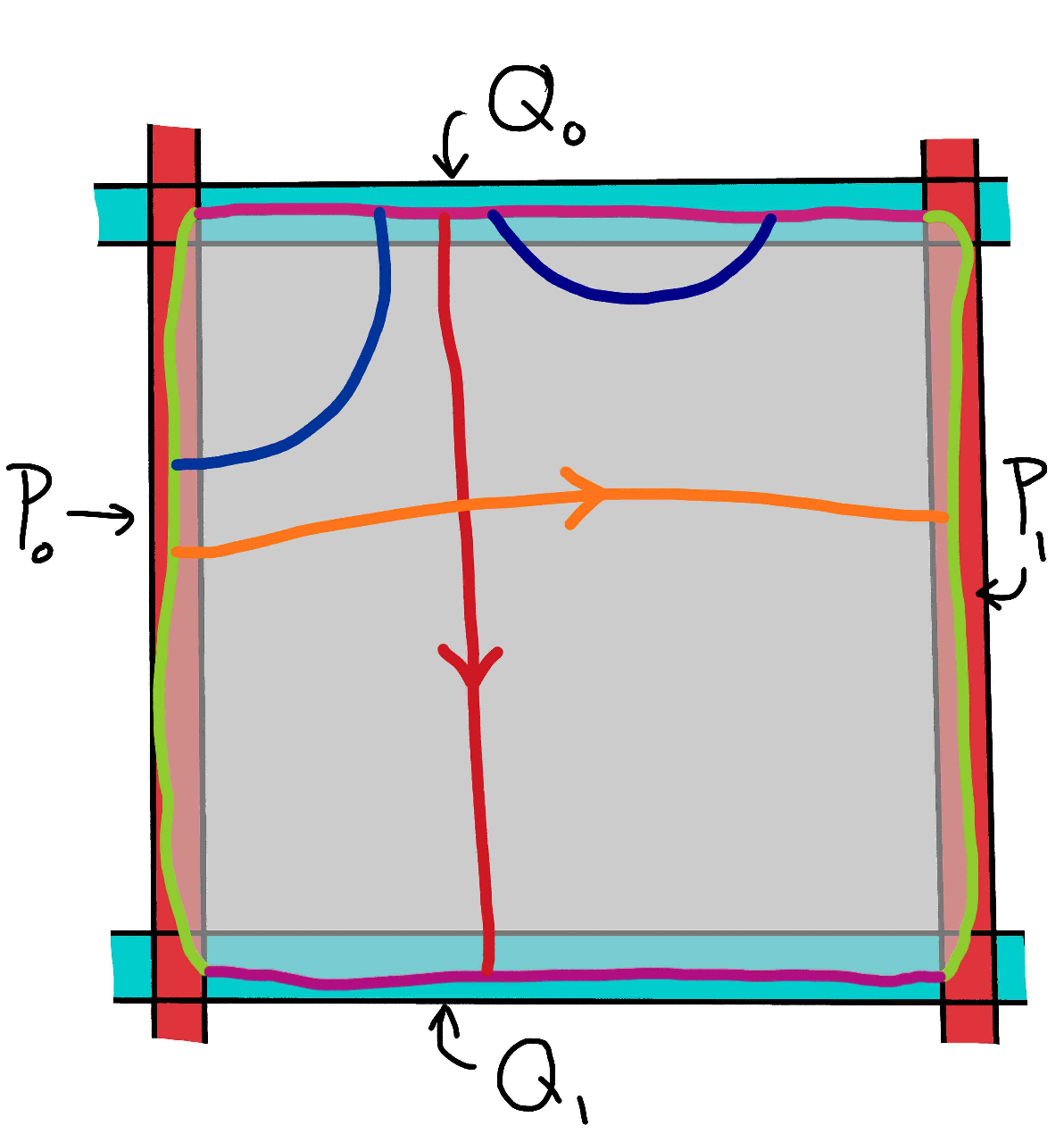}\\
  \caption{The diagram arising from a 4-cycle in $\Delta$.}\label{fig:4cycle}
\end{figure}

If $K$ is a dual curve in $D$, then $K$ travels from $P_0$ to $P_1$ or from $Q_0$ to $Q_1$.  Indeed, $K$ cannot travel from, say, $P_1$ to $P_1$, since $P_1$ is a geodesic.  By Lemma~\ref{lem:fixedcorners}, $K$ cannot travel from $P_i$ to $Q_j$, for then we could modify $A$, without affecting the 4-cycle in $\Delta$, to produce a lower complexity fixed-carrier diagram.  Similarly, no two dual curves emanating from $P_i$ (or $Q_i$) can cross.  Denote by $\mathbb H$ the set of dual curves traveling from $P_0$ to $P_1$ and by $\mathbb V$ the set of dual curves traveling from $Q_0$ to $Q_1$.  Each element of $\mathbb H$ crosses each element of $\mathbb V$, and there are no other intersections of dual curves in $D$.  Hence $D$ is a planar grid isomorphic to $P_0\times Q_0$.  In particular, $D$ is a CAT(0) cube complex whose set of hyperplanes is $\mathbb H\sqcup\mathbb V$.

Let $H,H'\in\mathbb H$ and $V\in\mathbb V$ be dual curves in $D$.  Since $H$ crosses $V$, the dual curves $H$ and $V$ map to distinct hyperplanes of $\widetilde X$, since hyperplanes in $\widetilde X$ do not self-cross.  Since $H$ and $H'$ are both dual to 1-cubes of $P_0$, and $P_0\rightarrow\widetilde X$ is a geodesic, the dual curves $H,H'$ must map to distinct hyperplanes.  Hence the map $D\rightarrow\widetilde X$ is injective on hyperplanes.  Since $D\rightarrow\widetilde X$ is a cubical map of CAT(0) cube complexes that is injective on hyperplanes, it is an isometric embedding.

Suppose $r\leq\min(|P_0|,|Q_0|)$.  Then $D$ contains an $r\times r$ planar grid $E$ with boundary path $P$.  Note that the map $D\rightarrow\widetilde X$ restricts to an isometric embedding $E\rightarrow\widetilde X$, and in particular $P$ embeds in $\widetilde X$.  Note that $|P|=4r$, while $|E|=r^2$.

If there exists a disc diagram $F\rightarrow\widetilde X$ with $\partial_pF=P$ and $\area(F)<r^2$, then we could excise the interior of $E$ from $D$ and attach $F$ along $P$ to obtain a lower-area diagram $D'$ with boundary path $A$, contradicting the fact that $D$ has minimal area among diagrams with boundary path $A$.  Hence every disc diagram bounded by $P$ has area at least $r^2$.
\end{rem}

\begin{lem}\label{lem:bipartitesubgraphimpliesplanargrid}
If $\widetilde X$ has finite degree and $\Delta$ does not have thin bicliques, then $\widetilde X$ is not $\delta$-hyperbolic for any $\delta<\infty$.
\end{lem}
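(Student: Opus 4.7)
The plan is to contradict the linear isoperimetric inequality of Lemma~\ref{lem:isoperimetric}: for arbitrarily large $r$ I will exhibit a closed combinatorial path in $\widetilde X$ of length $4r$ such that every disc diagram bounded by it has area at least $r^2$, violating any linear bound and ruling out $\delta$-hyperbolicity.

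By hypothesis, for each $n$ there is $K_{n,n} \subseteq \Delta$ realized by hyperplanes $H_1, \ldots, H_n$ and $V_1, \ldots, V_n$ with $H_i \perp V_j$ for all $i, j$. Viewed as CAT(0) cube complexes in their own right, both $V_1$ and $H_1$ are locally finite with degree bounded in terms of $D$, since $\widetilde X$ has degree at most $D$. The restrictions $H_1 \cap V_1, \ldots, H_n \cap V_1$ are $n$ distinct hyperplanes of $V_1$, and similarly $V_1 \cap H_1, \ldots, V_n \cap H_1$ are $n$ distinct hyperplanes of $H_1$. I will use the following \emph{spreading sublemma}: in a locally finite CAT(0) cube complex $Y$ of bounded degree, given distinct hyperplanes $W_0, W_1, \ldots, W_n$, some $W_i$ with $i \geq 1$ satisfies $d_Y(N(W_0), N(W_i)) \geq g(n)$ for a function $g$ with $g(n) \to \infty$. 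This is a ball-counting argument: the number of hyperplanes of $Y$ with carrier meeting the radius-$r$ ball about a fixed vertex grows only exponentially in $r$, so for large $n$ not all $n+1$ hyperplanes can remain close to $W_0$.

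Applying the sublemma inside $V_1$ with $W_0 = H_1 \cap V_1$ yields an index $b$ with $d_{V_1}(N_{V_1}(H_1 \cap V_1), N_{V_1}(H_b \cap V_1)) \geq g(n)$; applying it inside $H_1$ with $W_0 = V_1 \cap H_1$ yields an index $d$ with $d_{H_1}(N_{H_1}(V_1 \cap H_1), N_{H_1}(V_d \cap H_1)) \geq g(n)$. Now apply Remark~\ref{rem:planargridfrom4cycle} to the embedded 4-cycle $H_1 \perp V_1 \perp H_b \perp V_d \perp H_1$ in $\Delta$: for a boundary path $P_0 Q_0 P_1 Q_1$ with $P_0 \to N(V_1)$ and $Q_0 \to N(H_1)$, the identification $N(V_1) \cong V_1 \times [-\tfrac12, \tfrac12]$ together with convexity of carriers forces $|P_0| \geq g(n)$ and $|Q_0| \geq g(n)$. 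Setting $r := g(n)$, the remark then produces a closed path in $\widetilde X$ of length $4r$ every disc diagram bounded by which has area at least $r^2$. Letting $n \to \infty$ contradicts Lemma~\ref{lem:isoperimetric}.

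The main obstacle is the spreading sublemma and its simultaneous application inside $V_1$ and $H_1$: one must verify that each hyperplane inherits local finiteness and a degree bound from $\widetilde X$ (routine from the hypothesis on the degree) and that the choices of $H_b$ and $V_d$ are genuinely independent, so that the planar grid produced by Remark~\ref{rem:planargridfrom4cycle} is large in both directions simultaneously. The bipartite structure of $K_{n,n}$ is what permits this independence.
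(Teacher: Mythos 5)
Your ``spreading sublemma'' is false as stated, and the gap is fatal to the argument.  Take $Y=\reals^2$ tiled by unit squares (degree $4$, so bounded).  Let $W_0$ be a fixed horizontal hyperplane and $W_1,\ldots,W_n$ be $n$ distinct vertical hyperplanes.  Every $W_i$ crosses $W_0$, so $d_Y\bigl(N(W_0),N(W_i)\bigr)=0$ for all $i$, and no function $g(n)\to\infty$ can be extracted.  The ball-counting you invoke is correct -- the number of hyperplanes whose carrier meets a radius-$r$ ball about a fixed vertex is bounded in terms of $r$ and the degree -- but it does not control $d_Y(N(W_0),N(W_i))$, because $N(W_0)$ is in general unbounded: a hyperplane may be at distance $0$ from $N(W_0)$ while its carrier lies arbitrarily far from any chosen basepoint.  ``Close to $W_0$'' is simply not the same as ``close to a fixed vertex.''

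This failure is live in exactly the place you use it.  Inside $V_1$, the hyperplanes $\bar H_i=H_i\cap V_1$ may all cross $\bar H_1$; this happens whenever $H_i\bot H_1$ in $\widetilde X$, which the hypothesis $K_{n,n}\subset\Delta$ does not forbid (take $\widetilde X=\reals^3$, with $H_1$ a hyperplane in the first coordinate, $H_2,\ldots,H_n$ hyperplanes in the second, and the $V_j$ hyperplanes in the third).  Then every $N_{V_1}(\bar H_i)$ meets $N_{V_1}(\bar H_1)$ and there is no index $b$ with the required separation, so the $4$-cycle you feed into Remark~\ref{rem:planargridfrom4cycle} produces a degenerate grid with $|P_0|=0$.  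The mechanism by which Remark~\ref{rem:planargridfrom4cycle} yields a lower bound on area requires \emph{both} $|P_0|$ and $|Q_0|$ to be large, and nothing in the biclique hypothesis alone forces that for a $4$-cycle chosen naively from distant-looking hyperplanes.

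The paper's proof avoids this by a substantially different mechanism, which is the real content of the lemma.  After making $\mathcal H$ and $\mathcal V$ inseparable, it constructs (Claim~I) an \emph{isometrically embedded tree} $T\subseteq N(V_0)$ whose hyperplanes are precisely the $H_i$ -- crucially, $T$ deliberately \emph{forgets} the crossings among the $H_i$, replacing each by an osculation in $T$, so that $T$ is honestly one-dimensional.  The degree bound on $\widetilde X$ descends to a degree bound on $T$, and then Claim~II is a counting argument on a \emph{bounded-degree tree} with at least $R$ edges: such a tree must contain a geodesic segment of length $\gtrsim\log_{D-2}R$.  It is the tree structure, not a ball-count around a hyperplane carrier, that converts ``many hyperplanes'' into ``a long geodesic.''  One then does the same for $\mathcal V$, embeds $T_H\times T_V$, and extracts the $S\times S$ grid.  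To repair your proposal you would need to replace the spreading sublemma by a statement of this kind; as written there is no correct route from ``$n$ hyperplanes of $V_1$'' to ``two of them with carriers at distance $\gtrsim g(n)$'' without some such one-dimensional scaffolding.  (A secondary, minor omission: the paper also disposes separately of the cases $D\le 3$, where Remark~\ref{rem:planargridfrom4cycle} already precludes an embedded $4$-cycle.)
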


\begin{proof}
If the degree $D$ of $\widetilde X$ is $0$, then $\widetilde X$ is a 0-cube.  If $D=1$, then $\widetilde X$ is a 1-cube.  If $D=2$, then either $\widetilde X$ is a single 2-cube or $\widetilde X$ is an interval.  In each of these cases, $\Delta$ has thin bicliques and $\widetilde X$ is hyperbolic.

If $D=3$, then by Remark~\ref{rem:planargridfrom4cycle}, $\Delta$ cannot contain an embedded 4-cycle and thus has thin bicliques.  On the other hand, either $\widetilde X$ is a single 3-cube, or $\widetilde X$ embeds in $T\times[-\frac{1}{2},\frac{1}{2}]$ for some tree $T$.  Hence $\widetilde X$ is hyperbolic.  Thus we assume that $D>3$.

For $2\ll R<\infty$, let $\mathcal H,\mathcal V$ be disjoint sets of hyperplanes, with $\min(|\mathcal H|,|\mathcal V|)\geq R$, such that $K(\mathcal V,\mathcal H)\subseteq\Delta$, i.e. for all $V\in\mathcal V,\,H\in\mathcal H$, we have $V\bot H$.  Let $V_0,V_1$ be distinct hyperplanes in $\mathcal V$ and let $H_0,H_1$ be distinct hyperplanes in $\mathcal H$.  Then $H_0\bot V_0\bot H_1\bot V_1\bot H_0$ is an embedded 4-cycle in $\Delta$.

Without loss of generality, $\mathcal V$ and $\mathcal H$ are inseparable.  Indeed, if $W$ is a hyperplane separating $H,H'\in\mathcal H$, then $W$ crosses each $V\in\mathcal V$, since $V\bot H$ and $V\bot H'$.  Hence we can include $W$ in $\mathcal H$ without affecting the fact that $\mathcal V$ and $\mathcal H$ generate a biclique in $\Delta$.

By Lemma~\ref{lem:productembed}, there exist isometrically embedded subcomplexes $A(\mathcal V)$ and $A(\mathcal H)$ such that the set of hyperplanes crossing $A(\mathcal V)$ is exactly $\mathcal V$ and the set of hyperplanes crossing $A(\mathcal H)$ is precisely $\mathcal H$.  By the same lemma, $\widetilde X$ contains an isometrically embedded subcomplex $A\cong A(\mathcal V)\times A(\mathcal H)$.

By Lemma~\ref{lem:interval}, for any $s\geq 0$, we can choose $R$ large enough that $A(\mathcal V)$ and $A(\mathcal H)$ respectively contain geodesic segments $P,Q$ of length at least $s$.  Hence $A\subset\widetilde X$ contains an $s\times s$ isometrically embedded planar grid $E$ with $|\partial_pE|=4s$.  By Remark~\ref{rem:planargridfrom4cycle}, and the fact that each of $P$ and $Q$ lies in a hyperplane-carrier, any disc diagram bounded by $\partial_pE$ has area at least $\area(E)=s^2$.  Thus $\widetilde X$ is not hyperbolic, by Lemma~\ref{lem:dimensionhyper} and Lemma~\ref{lem:isoperimetric}.
\end{proof}

\begin{rem}
The lemma holds for $\widetilde X$ of infinite maximal degree, under other interesting hypotheses.  For example, if $\widetilde X$ contains bicliques $K(\mathcal V,\mathcal H)$ such that $\mathcal V$ and $\mathcal H$ are free of facing triples, and can be chosen arbitrarily large, then $\widetilde X$ is not hyperbolic, by an argument very similar to that used to prove Lemma~\ref{lem:bipartitesubgraphimpliesplanargrid}.
\end{rem}

\begin{lem}\label{lem:productembed}
Let $\mathcal H$ be a finite, inseparable set of hyperplanes.  Then there exists an isometrically embedded compact subcomplex $A(\mathcal H)\subseteq\widetilde X$ such that the set of hyperplanes crossing $A(\mathcal H)$ is exactly $\mathcal H$.

Moreover, if $\mathcal V$ is another finite inseparable set, and for all $V\in\mathcal V,\,H\in\mathcal H$, the hyperplanes $H$ and $V$ cross, then there is an isometric embedding $A(\mathcal H)\times A(\mathcal V)\hookrightarrow\widetilde X$.
\end{lem}

\begin{proof}
Let $\mathcal H=\{H_1,\ldots,H_m\}$.  To produce $A(\mathcal H)$, we shall argue by induction on $m$.  In the base case, $m=1$, we choose $A(\mathcal H)$ to be any 1-cube dual to $H_1$.  Then $A(\mathcal H)$ is obviously compact and convex (and therefore isometrically embedded), and the unique hyperplane crossing $A(\mathcal H)$ is $H_1$.

Suppose that $m\geq 2$ and suppose that the above labeling of the elements of $\mathcal H$ has the property that $\{H_1,\ldots,H_{m-1}\}$ is inseparable.  Our induction hypothesis is that for any CAT(0) cube complex $\widetilde Y$ and any collection $\{W_1,\ldots,W_{m-1}\}$ of hyperplanes in $\widetilde Y$, there exists a compact convex subcomplex $A\subseteq\widetilde Y$ such that the set of hyperplanes crossing $A$ is precisely $\{H_1,\ldots,H_{m-1}\}$.  In particular, this is hypothesized for any convex subcomplex of $\widetilde X$, regarded as a CAT(0) cube complex in its own right.

Applying this hypothesis to $\widetilde X$ itself yields a compact, convex subcomplex $A_{m-1}\subset\widetilde X$ such that the set of hyperplanes that cross $A_{m-1}$ is exactly $\{H_1,\ldots,H_{m-1}\}$.\\

\textbf{The case $N(H_m)\cap A_{m-1}\neq\emptyset$:} First suppose that $N(H_m)\cap A_{m-1}$ contains a 0-cube $x$.  Since $x\in N(H_m)$, there exists a 1-cube $e$ dual to $H_m$ such that $x\in e$.  Let $A'=A_{m-1}\cup e$, and let $A(\mathcal H)$ be the cubical convex hull of $A'$.  More precisely, $A(\mathcal H)$ is the subcomplex whose 0-skeleton is the intersection of all halfspaces of $\widetilde X^{(0)}$ that contain $A'$.  By definition, each hyperplane crossing $A(\mathcal H)$ must cross $A'$, and therefore belongs to $\mathcal H$.  Conversely, each hyperplane of $\mathcal H$ crosses $A'$, and therefore $A(\mathcal H)$.  Indeed, the induction hypothesis ensures that $H_i$ crosses $A_{m-1}\subset A'$ for $1\leq i\leq m-1$, and $H_m$ crosses $e\subset A'$.  Being convex, $A(\mathcal H)$ is isometrically embedded, and, since finitely many hyperplanes cross $A(\mathcal H)$, it is compact.\\

\textbf{The general case:}  Consider the set $\mathcal A_{m-1}$ of all compact, convex subcomplexes of $\widetilde X$ that are crossed by exactly the set $\{H_1,\ldots,H_{m-1}\}$ of hyperplanes.  Among these, choose $A_{m-1}$ as close as possible to $N(H_m)$.  If $d_{\widetilde X}(N(H_m),A_{m-1})=0$, then the construction of $A(\mathcal H)$ is complete.  Hence suppose that some hyperplane $U$ separates $A_{m-1}$ from $H_m$.  Necessarily, $U\not\in\mathcal H$.

For all $j\in\{1,\ldots,H_{m-1}\}$, the hyperplane $U$ cannot separate $H_m$ from $H_j$, and $H_m$ and $A_{m-1}$ lie in distinct halfspaces associated to $U$, and $N(H_j)\cap A_{m-1}\neq\emptyset$.  It follows that $H_j\bot U$ for $1\leq j\leq m-1$.

Denote by $U_1$ the copy of $U$ bounding $N(U)\cong U\times[-1,1]$ on the side contained in the halfspace associated to $U$ that contains $H_m$.  Then $U_1$ is a CAT(0) cube complex whose hyperplanes have the form $V\cap U_1$, where $V$ is a hyperplane of $\widetilde X$ that crosses $U$.  Moreover, the map $V\mapsto V\cap U_1$ is a bijection from the set of hyperplanes of $\widetilde X$ that cross $U$ to the set of hyperplanes of $U_1$.  Since $H_j\bot U$ for each $j$, the cube complex $U_1$ contains a hyperplane $H_j\cap U_1$ for $1\leq j\leq m-1$.

Since $U_1$ is convex in $\widetilde X$, any two hyperplanes $H_i\cap U_1,H_j\cap U_1$ of $U_1$ are separated by a hyperplane $V\cap U_1$ of $U_1$ if and only if $H_i,H_j$ are separated in $\widetilde X$ by $V$.  Hence $\{H_1\cap U_1,\ldots,H_{m-1}\cap U_1\}$ is a set of $m-1$ inseparable hyperplanes in $U_1$.  Applying our induction hypothesis to $U_1$ shows that there exists a compact convex subcomplex $B_{m-1}\subset U_1$ such that the set of hyperplanes of $U_1$ that cross $B_{m-1}$ is precisely $\{H_1\cap U_1,\ldots,H_{m-1}\cap U_1\}$.

Now, the inclusion $U_1\hookrightarrow\widetilde X$ embeds $B_{m-1}$ in $\widetilde X$ as a compact subcomplex.  Moreover, since $B_{m-1}$ is convex in $U_1$, and $U_1$ is convex in $\widetilde X$, it follows that $B_{m-1}$ is convex in $\widetilde X$.

By construction, each $H_j$ crosses $B_{m-1}$.  Conversely, suppose that some hyperplane $V$ crosses $B_{m-1}$.  Then, regarding $B_{m-1}$ as a subcomplex of $U_1$, we see that $V\cap U_1=H_j\cap U_1$ for some $j$, and therefore that $V=H_j$.  We have verified that $B_{m-1}\in\mathcal A_{m-1}$.  Since $U_1$ contains $B_{m-1}$ and lies in the same halfspace associated to $U$ as does $H_m$, the subcomplexes $N(H_m)$ and $B_{m-1}$ of $\widetilde X$ are not separated by $U$.  Thus $B_{m-1}$ is an element of $\mathcal A_{m-1}$ that is strictly closer than $A_{m-1}$ to $N(H_m)$, contradicting our choice of $A_{m-1}$.  Hence we can always choose $A_{m-1}$ so that $A_{m-1}\cap N(H_m)\neq\emptyset$ and argue as above to produce $A(\mathcal H)$.\\

\textbf{Products:}  Let $\mathcal H,\mathcal V$ be finite, inseparable sets of hyperplanes such that each $H\in\mathcal H$ crosses each $V\in\mathcal V$.  Since $\mathcal V\cup\mathcal H$ is inseparable, the above argument yields a compact, convex subcomplex $A(\mathcal V\cup\mathcal H)$ such that the set of hyperplanes crossing $A(\mathcal V\cup\mathcal H)$ is precisely $V\cup\mathcal H$.  Similarly, we have convex, compact subcomplexes $A(\mathcal V),A(\mathcal H)$ such that the set of hyperplanes crossing $A(\mathcal V)$ (respectively, $A(\mathcal H)$) is exactly $\mathcal V$ (respectively, $\mathcal H$).

Let $b\in A(\mathcal V\cup\mathcal H)$ be a 0-cube.  (As usual, for a hyperplane $U$, we denote by $b(U)$ the associated halfspace containing $b$.)  Then $A(\mathcal V)$ contains a unique 0-cube $b_v$, called the \emph{projection} of $b$ to $A(\mathcal V)$, such that $b_v(V)=b(V)$ for all $V\in\mathcal V$, and $A(\mathcal H)$ contains a 0-cube $b_h$ such that $b_h(H)=b(H)$ for all $H\in\mathcal H$.  Indeed, to construct $b_v$, we choose a halfspace $b_v(U)$ for each hyperplane $U$ as follows.  First, $b_v(U)=b(U)$ when $U\in\mathcal V$.  Second, if $U\not\in\mathcal V\cup\mathcal H$ and $U$ does not separate $A(\mathcal V\cup\mathcal H)$ from $A(\mathcal V)$, then $b_v(U)=b(U)$.  Third, if $U\in\mathcal H$, or if $U$ separates $A(\mathcal V\cup\mathcal H)$ from $A(\mathcal V)$, then $U$ crosses each element of $\mathcal V$, and we can let $b_v(U)$ be the complement of $b(U)$ without affecting consistency.  Since $b_v$ differs from $b$ on finitely many hyperplanes, this orientation is canonical, and since the set of hyperplanes $U$ for which $b_v(U)\neq b(U)$ is precisely the set of hyperplanes separating $A(\mathcal V\cup\mathcal H)$ from $A(\mathcal V)$, we have $b_v\in A(\mathcal V)$.  The projection $b_h$ is constructed analogously.

Consider the map $A(\mathcal V\cup\mathcal H)^{(0)}\rightarrow A(\mathcal V)^{(0)}\times A(\mathcal H)^{(0)}$ defined by $b\mapsto (b_v,b_h)$.  For any $(b_v',b_h')\in A(\mathcal V)^{(0)}\times A(\mathcal H)^{(0)}$, let $b'$ be the 0-cube of $\widetilde X$ that orients hyperplanes not in $\mathcal V\cup\mathcal H$ toward a fixed 0-cube $b_o\in A(\mathcal V\cup\mathcal H)$, and orients $V\in\mathcal V$ toward $b_o$ if and only if $b'_v$ orients $V$ toward the projection of $b_o$, and orients $H\in\mathcal H$ toward $b_o$ if and only if $b'_h$ orients $H$ toward the projection of $b_o$.  Since elements of $\mathcal V$ can be oriented independently of elements of $\mathcal H$, this orientation is consistent, so $b'$ is a genuine 0-cube.  By construction, $b'$ maps to $(b'_v,b'_h)$, whence the given map is surjective.  Now, by construction, 0-cubes $b,b'\in A(\mathcal V\cup\mathcal H)$ differ exactly on those hyperplanes in $\mathcal V$ on which their projections to $A(\mathcal V)$ differ and on those hyperplanes in $\mathcal H$ on which their projections to $A(\mathcal H)$ differ.  Hence the given map is an isometric embedding, and it follows that $A(\mathcal V\cup\mathcal H)\cong A(\mathcal V)\times A(\mathcal H)$, as desired.
\end{proof}

\begin{lem}\label{lem:interval}
Let $\widetilde X$ be a CAT(0) cube complex of finite degree.  Then for all $s\geq 0$, there exists $R$ such that if $\mathcal H$ is a finite, inseparable set of hyperplanes with $|\mathcal H|\geq R$, then $A(\mathcal H)$ contains a geodesic segment of length at least $s$.
\end{lem}

\begin{proof}
Let $\mathcal H$ be a finite, inseparable set of hyperplanes, with $|\mathcal H|=R$.  Since $A(\mathcal H)$ is crossed by $R<\infty$ hyperplanes and is convex in $\widetilde X$, it is a compact CAT(0) cube complex.  Fix a base 0-cube $a_0\in A(\mathcal H)$ and let $\{a_1,\ldots,a_p\}$ be the finite set of 0-cubes $a_i$ such that no geodesic segment $\gamma$ joining $a_0$ to $a_i$ is properly contained in a geodesic segment $\gamma'$ emanating from $a_0$ (i.e. the $a_i$ are the 0-cubes ``as far as possible'' from $a_0$, for $i\geq 1$).  For $1\leq i\leq p$, let $\gamma_i$ be a geodesic segment in $A(\mathcal H)$ joining $a_0$ to $a_i$.

The set $\{\gamma_i\}_{i=1}^p$ can be chosen so that $\mathcal T=\cup_i\gamma_i$ is an embedded (though not necessarily isometrically embedded) tree in $A(\mathcal H)^{(1)}$.  Moreover, $\mathcal T$ contains at least one 1-cube dual to each hyperplane in $\mathcal H$.

We first show that $\mathcal T$ can be chosen to be a tree.  For any initial choice of $\{\gamma_i\}$, $\mathcal T$ is connected since each $\gamma_i$ contains $a_0$.  Now, if $\mathcal T$ contains a cycle, then for some $i\neq j$, there exist $\gamma_i'\subset\gamma_i,\gamma_j'\subset\gamma_j$ such that $\gamma_i'(\gamma_j')^{-1}$ is a closed path joining $a_0$ to a 0-cube $b\in\gamma_i\cap\gamma_j$.  By removing the interior of $\gamma_i'$ from $\gamma_i$, we can replace $\gamma_i$ with a geodesic $\gamma''_i$ joining $a_0$ to $a_i$ and consisting of $\gamma_j'$ followed by the subpath of $\gamma_i$ joining $b$ to $a_i$.  Indeed, either $\gamma''_j$ is a geodesic, or some hyperplane $H$ crosses $\gamma'_j$ and separates $b$ from $a_i$.  Thus $H$ separates $a_i$ from $b$ and $b$ from $a_0$.  Hence $a_0$ and $a_i$ lie in the same halfspace associated to $H$, i.e. $H$ does not separate $a_0$ from $a_i$.  Such an $H$ cannot cross a geodesic $\gamma_i$ joining $a_0$ to $a_i$, a contradiction.  Hence $\gamma''_j$ is a geodesic.

Let $\mathcal T'$ be the union of the paths $\{\gamma_k\}_{k\neq i}\cup\{\gamma''_i\}$.  $\mathcal T'$ is again a union of maximal geodesics in $A(\mathcal H)$, with one geodesic joining $a_0$ to each $a_i$.  But $\mathcal T'\subset\mathcal T$ was formed by breaking a cycle, and hence $\rank\left(\pi_1\mathcal T'\right)<\rank\left(\pi_1\mathcal T\right)$.  Since $\mathcal T$ is finite, repeating this procedure finitely many times yields the desired tree, which we henceforth denote by $\mathcal T$.

Since each $H\in\mathcal H$ separates at least two 0-cubes of $A(\mathcal H)$, it is obvious that $\mathcal T$ contains at least one 1-cube dual to each hyperplane, and hence a total of at least $R$ 1-cubes.

Next, let $D$ be the degree of $\widetilde X$.  We shall show that for some $i\leq p$, the path $\gamma_i$ has length at least $S=\log_{D-2}[(D-3)(R+1)+1]-1$.  Now, although $\mathcal T\hookrightarrow\widetilde X$ is not an isometric embedding, $\gamma_i$ is a geodesic of $\widetilde X$ since it is a geodesic of $A(\mathcal H)$ and the latter is isometrically embedded in $\widetilde X$.  Hence $A(\mathcal H)\subset\widetilde X$ contains a geodesic of length at least $S$.  Choosing $\mathcal H$ to have cardinality
\[R\geq\frac{\exp\left[(s+1)\log(D-2)\right]-1}{D-3}-1\]
ensures that $A(\mathcal H)$ contains a $\widetilde X$-geodesic of length at least $s$.

To conclude, consider $\mathcal T$ as a tree rooted at $a_0$.  Let $S=\max_i|\gamma_i|$, so that $S$ is the depth of $\mathcal T$.  Note that the depth of $\mathcal T$ is realized by a geodesic $\gamma_i$ of $\widetilde X$, and recall that $\mathcal T$ has at least $R$ edges (and $R+1$ vertices).  Let $d$ be the maximal degree of a vertex of $\mathcal T$.  In any case, $d\leq D$.  In Lemma~\ref{lem:bipartitesubgraphimpliesplanargrid}, $d\leq D-1$, since each vertex of $\mathcal T$ is contained in $A(\mathcal H)\times A(\mathcal V)$, and has at least one incident 1-cube dual to a hyperplane not in $\mathcal H$.  We also assume $d>2$, since otherwise $\mathcal T$ is a line segment with at least $R$ edges, and we have $S\geq R$.

View $\mathcal T$ as a subtree of a regular $d$-valent tree $\mathcal U$ rooted at $a_0$ having at least $R$ edges and the same depth as $\mathcal T$.  A computation shows that
\[R\leq\frac{(d-1)^{S+1}-1}{d-2}-1,\]
from which it follows that $S\geq\log_{d-1}[(R+1)(d-2)+1]-1$ as desired.
\end{proof}

\begin{rem}
While the planar grid arising in $\widetilde X$ from the complete bipartite graph $K\subset\Delta$ is isometrically embedded (in the combinatorial sense), it may not be convex.  Any distortion of $P$ in $\widetilde X$ reflects some failure of $K$ to be a full subgraph of $\Delta$, by Lemma~\ref{lem:convexsubwallspace}.
\end{rem}

\begin{lem}\label{lem:thinbicliquesimplieshyperbolic}
If $\Delta$ has thin bicliques, then $\widetilde X$ is hyperbolic.
\end{lem}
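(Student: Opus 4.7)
The plan is to show that the thin bicliques hypothesis on $\Delta$ forces a subquadratic isoperimetric inequality for $\widetilde X^{(1)}$; by the standard ``subquadratic implies linear'' gap theorem (Bowditch, following Gromov) together with the converse of Lemma~\ref{lem:isoperimetric}, this yields $\delta$-hyperbolicity of $\widetilde X^{(1)}$. Lemma~\ref{lem:dimensionhyper}(3) guarantees that $\widetilde X$ is finite-dimensional, and Lemma~\ref{lem:dimensionhyper}(1) then translates this to hyperbolicity of $\widetilde X$ in its CAT(0) metric.

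Fix $k$ such that $\Delta$ contains no $K_{k,k}$. Given any closed combinatorial path $\sigma \to \widetilde X$ of length $n$, let $\mathbb D \to \widetilde X$ be a minimum-area disc diagram with $\partial_p\mathbb D = \sigma$, and let $\mathcal K$ be the set of dual curves of $\mathbb D$. By Lemma~\ref{lem:minareaDD}, $\mathbb D$ has no nongons, monogons, oscugons, or bigons; hence every dual curve has exactly two ends on $\sigma$, giving $|\mathcal K| = n/2$, and distinct dual curves cross in at most one 2-cube. Consequently, $\area(\mathbb D)$ equals the number of unordered pairs of dual curves that cross in $\mathbb D$. Let $\phi : \mathcal K \to \mathcal W$ send each dual curve to its hyperplane image in $\widetilde X$. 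Since no 2-cube contains two midcubes of the same hyperplane, two dual curves that cross have distinct images, so $\phi$ induces a graph homomorphism from the dual-curve crossing graph $G$ of $\mathbb D$ into $\Delta$.

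The main estimate is that $G$ contains no $K_{N,N}$ for some $N = N(k)$ depending only on $k$. Suppose $G$ contains dual curves $K_1, \dots, K_N$ on one side and $L_1, \dots, L_N$ on the other with each $K_i$ crossing each $L_j$. If both $\phi(\{K_i\})$ and $\phi(\{L_j\})$ contain at least $k$ distinct hyperplanes, then choosing representatives produces $K_{k,k} \subseteq \Delta$, contradicting thin bicliques. Otherwise, by pigeonhole, at least $N/k$ of the $K_i$'s map to a common hyperplane $H$, forming that many pairwise-parallel dual curves in $\mathbb D$ each of which crosses all of the $L_j$'s, and hence a large planar sub-grid of $\mathbb D$. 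I would then argue that such a large collection of parallel dual curves in $\mathbb D$ with identical image contradicts minimality: the innermost strip between two such parallel dual curves bounds a subdiagram whose boundary may be modified (in the spirit of Lemma~\ref{lem:fixedcorners} and the cancellable-pair/hexagon-move machinery of~\cite{WiseIsraelHierarchy}) to yield a strictly smaller diagram with the same boundary. This bounds $N$ purely in terms of $k$.

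Given such an $N$, the Kővári–Sós–Turán theorem applied to $G$ (which has $n/2$ vertices and excludes $K_{N,N}$) yields $\area(\mathbb D) \leq C_N (n/2)^{2-1/N}$, a subquadratic isoperimetric bound. Bowditch's gap theorem promotes this to a linear isoperimetric inequality, and the converse of Lemma~\ref{lem:isoperimetric} (Gromov's characterization) then gives $\delta$-hyperbolicity. The main obstacle is the reduction argument in the main estimate: showing that a large collection of parallel dual curves in a minimum-area disc diagram all mapping to the same hyperplane contradicts minimality. This is not direct from the reductions of Lemma~\ref{lem:minareaDD} (such parallel curves do not form a bigon), so one must carefully analyze the strip between two innermost such curves and produce an area-reducing modification of $\mathbb D$.
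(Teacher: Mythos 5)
Your overall strategy is a genuine departure from the paper: you aim to bound $\area(\mathbb D)$ directly via the crossing graph $G$ of dual curves and K\H{o}v\'ari--S\'os--Tur\'an, then invoke the subquadratic-implies-linear gap theorem. The paper instead argues by contradiction from a fat geodesic triangle, pulling out two large families of dual curves that must map to a big biclique in $\Delta$. Both approaches revolve around the same heuristic (large area forces a large biclique), but the paper's choice of a \emph{geodesic} triangle is doing essential work that your argument is missing.

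The gap is precisely the step you flag as ``the main obstacle,'' and it is not a technicality: the claim that a minimal-area disc diagram cannot contain many parallel dual curves mapping to the same hyperplane is false. The reductions available in minimal diagrams are removal of nongons, monogons, oscugons, bigons, spurs, and cancellable pairs (Lemma~\ref{lem:minareaDD} and~\cite{WiseIsraelHierarchy}); a collection of pairwise-disjoint dual curves with a common image is none of these. Indeed, the multiplicity of the map $\phi$ from dual curves to hyperplanes is controlled only by the boundary path: if $\sigma$ crosses a hyperplane $H$ in $2m$ one-cubes, a minimal filling necessarily contains $m$ dual curves to $H$, and for a general closed combinatorial path $\sigma$ this $m$ is unbounded (think of a boundary path that traces an annular region and then a connecting slit, so that hyperplanes near the slit are crossed four or more times). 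Your pigeonhole step therefore produces a big sub-grid of $\mathbb D$ with many rows all mapping to the same $H$, but there is no area-reducing move available; the strip between two innermost such rows can map isometrically into $N(H)$ with nothing to cancel. Without a bound on the fibers of $\phi$, a $K_{N,N}$ in $G$ does not yield a $K_{k,k}$ in $\Delta$, and the K\H{o}v\'ari--S\'os--Tur\'an bound on $G$ cannot be launched.

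The paper sidesteps this exactly by filling a \emph{geodesic} triangle: each side is a combinatorial geodesic, so it crosses each hyperplane at most once, and since every dual curve has its two ends on two different sides of the triangle, no two dual curves can share a hyperplane image. This makes $\phi$ injective for free, so the biclique of crossings transfers directly to $\Delta$. If you want to salvage the isoperimetric route, you would need to restrict attention to boundary loops built from a controlled number of geodesic segments (so that $\phi$ has multiplicity bounded by the number of segments), and then the argument collapses back into something very close to the paper's thin-triangle argument. As written, though, the proposal does not close the loop.
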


\begin{proof}
By Lemma~\ref{lem:dimensionhyper}, it suffices to show that $\widetilde X^{(1)}$ is $\delta$-hyperbolic for some $\delta$.

Suppose to the contrary that for any $n\in\naturals$, there exists a combinatorial geodesic triangle $\chi_n\eta_n\nu_n\rightarrow\widetilde X^{(1)}$ that is not $n$-thin.  Let $D_n\rightarrow\widetilde X$ be a disc diagram of minimal area with boundary path $\chi_n\eta_n\nu_n$.  By assumption, there exists a point $x\in\chi_n$ such that $d_{\widetilde X}(x,\eta_n\cup\nu_n)>n$.  Let $\mathcal V$ be the set of hyperplanes separating $x$ from $\nu_n$ and let $\mathcal H$ be the set of hyperplanes separating $x$ from $\eta_n$.  Let $\mathbb V$ be the set of dual curves in $D_n$ that separate $x$ from $\nu_n$ in $D_n$ and let $\mathbb H$ be the set of dual curves in $D_n$ separating $x$ from $\eta_n$.  The diagram $D_n$ is shown in Figure~\ref{fig:hyperbolicdisc}.
\begin{figure}[h]
  \includegraphics[width=1.8 in]{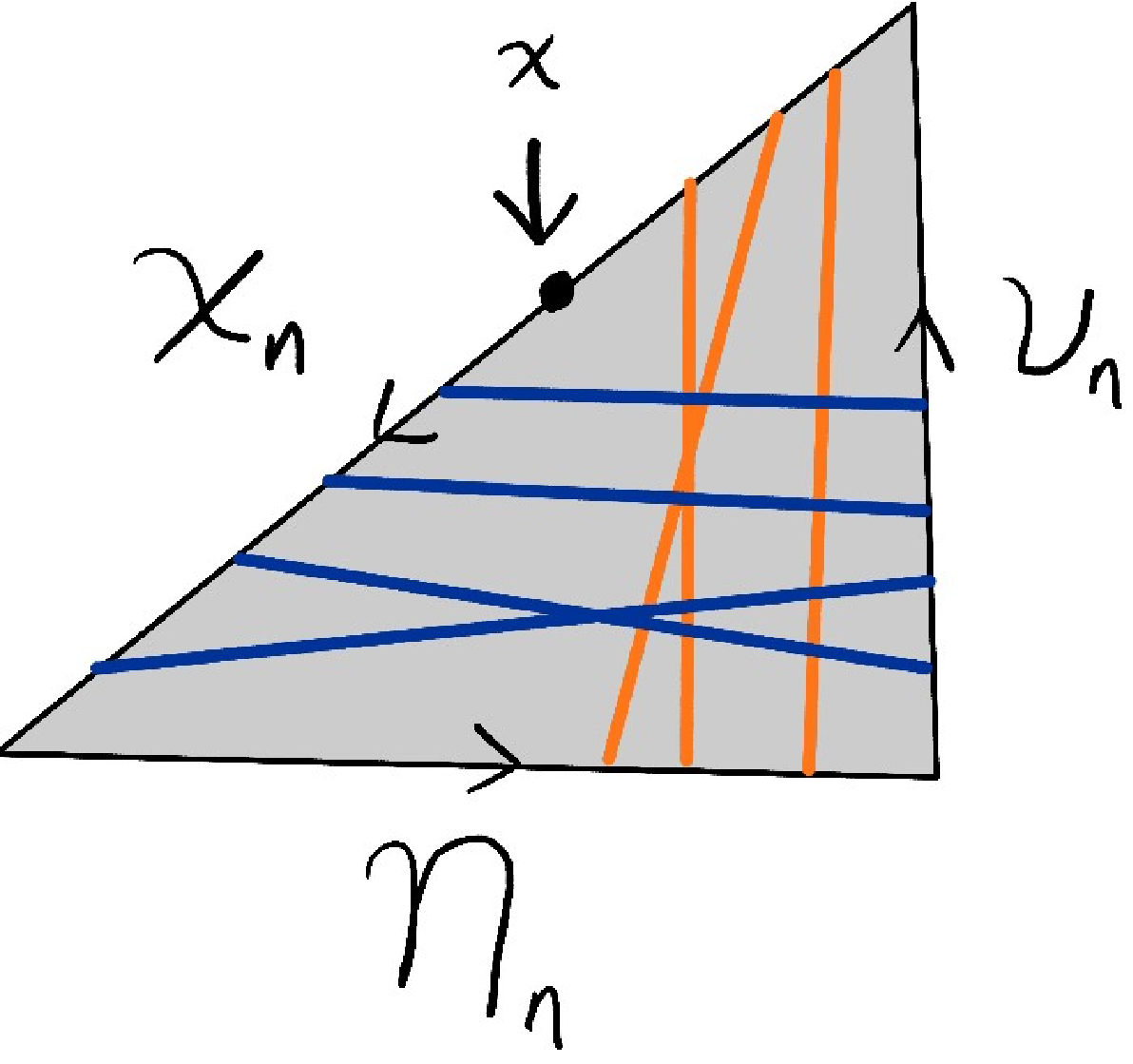}\\
  \caption{The diagram $D_n$ and a some vertical and horizontal separating dual curves.}\label{fig:hyperbolicdisc}
\end{figure}

We first show that the map $D_n\rightarrow\widetilde X$ induces bijections $\mathbb V\rightarrow\mathcal V$ and $\mathbb H\rightarrow\mathcal H$ and deduce that $|\mathbb V|,|\mathbb H|\geq n$.  A disc diagram argument then shows that each element of $\mathbb V$ crosses each element of $\mathbb H$ and thus that $K(\mathcal V,\mathcal H)$ is a complete bipartite subgraph of $\Delta$ with $|\mathcal V|,|\mathcal H|\geq n$.  Hence the failure of $\widetilde X$ to be hyperbolic implies that $\Delta$ does not have thin bicliques.\\
\textbf{The correspondences between $\mathbb V,\mathbb H$ and $\mathcal V,\mathcal H$:} Dual curves in $D_n$ map to distinct hyperplanes.  Indeed, since each side of the triangle $\partial_pD_n$ is a geodesic segment, no dual curve has both endpoints on the same side, because a geodesic contains at most a single 1-cube dual to each hyperplane.  Hence, if $C,C'$ are distinct dual curves in $D_n$, then one of the sides $\chi_n,\eta_n,\nu_n$ contains two of the four endpoints of $C\cup C'$.  Thus $C$ and $C'$ cannot map to the same hyperplane, for otherwise that side would cross a single hyperplane in two distinct 1-cubes, contradicting the fact that it is a geodesic.  Hence the maps $\mathbb V,\mathbb H\rightarrow\mathcal V,\mathcal H$ that associate dual curves in $D_n$ to hyperplanes according to the map $D_n\rightarrow\widetilde X$ are injective.

On the other hand, note that every element of $\mathbb V$ travels from $\chi_n$ to $\eta_n$.  Indeed, no dual curve in $D_n$ has both endpoints on the same side of the geodesic triangle.  Hence any $C\in\mathbb V$ travels from $\chi_n$ to $\eta_n$ since it cannot cross $\nu_n$ and similarly any $C\in\mathbb H$ travels from $\chi_n$ to $\nu_n$.  Any geodesic joining $x$ to some point of $\nu_n$ must cross each element of $\mathcal V$ exactly once, and thus each element of $\mathcal V$ occurs as a dual curve emanating from $\chi_n$ and terminating on $\eta_n$, i.e. as an element of $\mathbb V$.  The same argument holds for $\mathcal H$ and $\mathbb H$, and thus the desired correspondences between dual curves and hyperplanes are bijections.

Moreover, $|\mathcal V|,|\mathcal H|\geq n$, since the distance from $x$ to $\eta_n,\nu_n$ is precisely the number of hyperplanes separating $x$ from $\eta_n,\nu_n$.  Thus $|\mathbb V|,|\mathbb H|\geq n$.\\
\textbf{Crossing dual curves in $D_n$:}  Consider the decomposition $\chi_n=c_1c_2\ldots c_m$, where each $c_i$ is a 1-cube, with $c_1$ initial and $c_m$ terminal.  Suppose $x\in c_p$.  Then each element of $\mathcal V$ is dual to $c_i$ with $i\leq p$ and each element of $\mathcal H$ is dual to $c_i$ with $i\geq p$.  The dual curve emanating from $c_p$ belongs to $\mathbb V$, $\mathbb H$ or neither, according to the position of $x$ on $c_p$.  Since the elements of $\mathbb V$ end on $\eta_n$ and the elements of $\mathbb H$ end on $\nu_n$, each element of $\mathbb V$ crosses each element of $\mathbb H$ and hence $\mathcal V$ and $\mathcal H$ are the two classes of a complete bipartite subgraph of $\Delta$.
\end{proof}

\bibliographystyle{alpha}
\bibliography{MFHBIB}

%
%

\end{document}